\theoremstyle{plain}
\newtheorem{theorem}{Theorem}[section]
\newtheorem{lem}[theorem]{Lemma}{\bf}{\it}
\newtheorem{prop}[theorem]{Proposition}{\bf}{\it}
{\bf}{\it}
\newtheorem{defin}[theorem]{Definition}{\bf}{\rm}
\newtheorem{assumption}[theorem]{Assumption}{\bf}{\rm}
\newtheorem{rem}[theorem]{Remark}{\bf}{\rm}
\numberwithin{equation}{section}
\numberwithin{theorem}{section}
\newcommand{\res}{\mathop{\hbox{\vrule height 7pt width .5pt depth 0pt\vrule height .5pt width 6pt depth 0pt}}\nolimits}
\newcommand{\mc}[1]{{\mathcal #1}}
\newcommand{\bs}[1]{{\boldsymbol #1}}
\newcommand{\bb}[1]{{\mathbb #1}}
\newcommand{\rme}{\mathrm{e}}
\newcommand{\rmi}{\mathrm{i}}
\newcommand{\rmd}{\mathrm{d}}
\newcommand{\id}{{1 \mskip -5mu {\rm I}}}
\newcommand{\eps}{\varepsilon}
\newcommand{\supp}{\mathop{\rm supp}\nolimits}
\newcommand{\essup}{\mathop{\rm ess\,sup}}
\title[LDP for stochastic PDE approximation of the mean curvature flow]
{Stochastic Allen-Cahn approximation of the mean curvature flow: large deviations upper bound}
\author[L.\ Bertini]{Lorenzo Bertini}
\address{Lorenzo Bertini \hfill\break \indent
   Dipartimento di Matematica, Sapienza Universit\`a di Roma
   \hfill\break \indent
   P.le Aldo Moro 5, 00185 Rome, Italy}
 \email{bertini@mat.uniroma1.it}
\author[P.\ Butt\`a]{Paolo Butt\`a}
\address{Paolo Butt\`a\hfill\break \indent
   Dipartimento di Matematica, Sapienza Universit\`a di Roma 
   \hfill\break \indent
   P.le Aldo Moro 5, 00185 Rome, Italy}
 \email{butta@mat.uniroma1.it}
\author[A.\ Pisante]{Adriano Pisante}
\address{Adriano Pisante \hfill\break \indent
   Dipartimento di Matematica, Sapienza Universit\`a di Roma 
   \hfill\break \indent
   P.le Aldo Moro 5, 00185 Rome, Italy}
 \email{pisante@mat.uniroma1.it}
\begin{document}

\begin{abstract}
Consider the Allen-Cahn equation on the $d$-dimensional torus, $d=2,3$, in the sharp interface limit. As it is well known, the limiting dynamics is described by the motion by mean curvature of the interface between the two stable phases. Here, we analyze a stochastic perturbation of the Allen-Cahn equation and describe its large deviations asymptotics in a joint sharp interface and small noise limit. Relying on previous results on the variational convergence of the action functional, we prove the large deviations upper bound. The corresponding rate function is finite only when there exists a time evolving interface of codimension one between the two stable phases. The zero level set of this rate function is given by the evolution by mean curvature in the sense of Brakke. Finally, the rate function can be written in terms of the sum of two non-negative quantities: the first measures how much the velocity of the interface deviates from its mean curvature, while the second is due to the possible occurrence of nucleation events. 
\end{abstract}
\keywords{Stochastic Allen-Cahn equation \and Large deviations \and Mean curvature motion}

\maketitle
\thispagestyle{empty}

\section{Introduction}
\label{sec:1}

The van der Waals theory of phase transitions \cite{CH,vdW}  is based on the excess free energy functional,
\begin{equation}
\label{F0}
\mc F (u) := \int\!\Big[ \frac12 |\nabla u|^2 + W(u) \Big]\,\rmd x\;, 
\end{equation}
where $u\colon \bb R^d \to \bb R$ is the local order parameter and $W\colon \bb R\to [0,+\infty)$ is a smooth, symmetric, double well potential whose minimum value, chosen to be zero, is attained at, say, $u_\pm$. The constant functions $u(x) =u_\pm$ are interpreted as the pure phases of the system. The potential $W(u)$ represents the excess ``mean field'' free energy density of the homogenous state $u$ with respect to the pure phases $u_\pm$, while the gradient term in \eqref{F0} penalizes spatial variations of $u$. 

The sharp interface limit of \eqref{F0} has been analyzed in \cite{Modica} and extensively studied afterwards, see \cite{Alberti} for a review. The limit of the (properly rescaled) free energy turns out to be finite only if $u$ is a function of bounded variation taking values in $\{u_-,u_+\}$. For $u$ in this set, the limiting functional is given by $\tau\, {\mc H}^{d-1}(\mc S_u)$, where $\mc S_u$ denotes the jump set of $u$ and ${\mc H}^{d-1}(\mc S_u)$ is its $(d-1)$-dimensional Hausdorff measure. The \emph{surface tension} $\tau$ is given by
\begin{equation}
\label{tau} 
\tau = \int_{u_-}^{u_+}\! \sqrt{2W(s)} \, \rmd s\;.
\end{equation}
We note that $\tau$ can also be characterized as the minimum value of the one-dimensio\-nal excess free energy $\mc F$ in \eqref{F0} with the constraint $u(x) \to u_\pm$ as $x\to \pm \infty$.

After the pioneering paper \cite{AC}, the $L^2$-gradient flow of \eqref{F0}, i.e., the semi-linear parabolic equation,
\begin{equation}
  \label{1.1}
  \partial_t u = \Delta u - W'(u)\;,
\end{equation}
has become a basic model in the kinetics of phase separation and interface dynamics for systems with a non-conserved order parameter $u=u_t(x)$. 

Consider the evolution induced by \eqref{1.1} under diffusive rescaling of time and space. For suitably prepared initial data, which approach a sharp interface between the pure phases $u_\pm$, the asymptotics of the solution to \eqref{1.1} is described by the motion by mean curvature of the interface. This has been proven in \cite{Ilmanen} in the weak formulation of the mean curvature flow in terms of Brakke motions \cite{Brakke}, see also, e.g., \cite{BSS,ESS} for similar results in the framework of the level-set formulation. 

From both a phenomenological and a conceptual viewpoint, the addition of a random forcing term to \eqref{1.1}, that models the thermal fluctuation in the system, appears quite natural. Assuming this forcing to be Gaussian and translation covariant, we are led to consider the stochastic partial differential equation,
\begin{equation}
  \label{1.2}
  \partial_t u =\Delta u - W'(u) + \sqrt{2 \lambda} \, \eta^\gamma\;,
\end{equation}
where $\lambda>0$ measures the strength of the noise and $\eta^\gamma$ is a mean zero Gaussian space-time noise, that is white in time and whose space correlation is of order $\gamma$, e.g., 
\begin{equation}
  \label{1.3}
  \bb E \big( \eta^\gamma(t,x) \eta^\gamma(t',x') \big) = \delta(t-t') 
  \, \imath_\gamma(x-x')\;, \qquad \imath_\gamma(x) =
  \gamma^{-d}\imath(\gamma^{-1}x)\;,  
\end{equation}
where $\imath$ is a smooth positive function on $\bb R^d$ with compact support. For $\gamma>0$ the well-posedness and regularity properties of \eqref{1.2} in space dimension $d\le 3$ are discussed in \cite{BBP1}.

We understand that for $\gamma=0$ the process $\eta^\gamma$ is the space-time white noise. In this case - in space dimension $d>1$ - the well-posedness of \eqref{1.2} becomes a major issue and a proper renormalization of the non linear term $W'$ is needed.  In dimension $d=2$, when $W$ is a polynomial, this renormalization amounts to the Wick ordering \cite{AR,DD,JM,MW}. In dimension $d=3$, the renormalization of the non linearity is more involved; for a quartic potential $W$, existence and uniqueness of local-in-time solutions is proven in \cite{Hairer} and, more recently, global well-posedness has been obtained in \cite{MW1}.
 
Consider \eqref{1.2} in a bounded volume $\Lambda$ on the time interval $[0,T]$.  The corresponding large deviations are analyzed in \cite{CF} in the joint limit $\lambda\to 0$, $\gamma\to 0$. Under suitable conditions on these sequences, it is shown that the rate function is given, as it can be guessed from the Freidlin-Wentzell theory for finite dimension diffusion processes \cite{FW}, by
\begin{equation}
  \label{ifw}
  J (u) =\frac 14 \int_0^T\!\int_{\Lambda}  
  \big[ \partial_t u - \big( \Delta u - W'(u)\big) \big]^2\, \rmd x\,  \rmd t\;.
\end{equation}
Informally, denoting by $u^{\lambda,\gamma}$ the solution to \eqref{1.2}, the large deviations statement corresponds to the asymptotics,
\[
\bb P(u^{\lambda,\gamma}\in B) \asymp \exp\{-\lambda^{-1}\inf_{u\in B}J(u)\}\;.
\]
In space dimension $d\le 3$, the same rate function is obtained in the case of space-time white noise, that is when the parameter $\gamma$ is set equal to zero from the beginning. This has been proven in \cite{FJ} for $d=1$, in \cite{JM2} for $d=2$ (to be precise, it is there considered a non-local version of \eqref{1.2}), and \cite{HW} for $d=2,3$.  As we mentioned above, in space dimension $d=2,3$, the reaction term $W'$ has to be renormalized by subtracting infinite terms. On the other hand, the rate function is \eqref{ifw} \emph{without} any renormalization on $W'$. 
Very loosely, the underlying reason is the following. The large deviations principle is established in a weak topology and, although the added counter-terms are infinite (diverging as $\gamma\to 0$ if the noise is mollified as in \eqref{1.3}), they are multiplied by $\lambda$ and therefore irrelevant for the large deviations. 

The purpose of the present paper is to analyze the large deviations asymptotics of \eqref{1.2} under diffusive rescaling of space and time, i.e., in the sharp interface (singular) limit. By denoting with $\eps$ the scaling parameter and redefining the parameters $\lambda$ and $\gamma$, we thus consider the stochastic equation,
\begin{equation}
  \label{1.3b}
  \partial_t u = \Delta u - \frac{1}{\eps^2} W'(u) 
  + \sqrt{2 \lambda} \, \eta^\gamma\;,
\end{equation}
on a bounded volume that, to avoid the somewhat delicate issue of boundary conditions, we choose to be the $d$--dimensional torus. We are now interested in the joint limit $\eps,\lambda,\gamma\to 0$. 

To pursue the above program, one possibility is to take first the limit $\lambda,\gamma\to 0$ and then $\eps\to 0$. In view of the result in \cite{CF}, one is then led to analyze the variational convergence, more precisely the $\Gamma$-convergence \cite{DalMaso}, of the sequence of action functionals $(I_\eps)$ defined by
\begin{equation}
  \label{ifwe}
  I_\eps(u) =\frac 14 \eps \int_0^T\!\int_{\Lambda}
  \Big[ \partial_t u - \Big( \Delta u - \frac 1{\eps^2} W'(u)\Big)
  \Big]^2\, \rmd x\,  \rmd t\;,
\end{equation}
in which the pre-factor $\eps$ has been inserted to have a finite limit. The problem of the variational convergence of $(I_\eps)$ has been analyzed in \cite{KRT,KORV}, precisely with the motivation of the large deviations asymptotics of the stochastic Allen-Cahn equation, and in greater detail in \cite{MR}. The precise definition of the limiting functional requires tools from geometric measure theory and it is deferred to the next section. Here, we just give a heuristic description of the results obtained in \cite{KORV,MR}. Assume $d\le 3$. The limiting functional is finite only if $u$ takes value in $\{u_\pm\}$ and in the simplest case of interfaces with multiplicity one is given by
\begin{equation}
  \label{i0}
  I_0(u) = \frac {\tau}4 \int_0^T\! \int_{\Sigma_t} \big| \nu_t
  -H_t\big|^2  \, \rmd \mc H^{d-1} \, \rmd t +I_\mathrm{nucl} (u)\;, 
\end{equation}
where $\tau$ is defined in \eqref{tau}, $\Sigma_t$ is the boundary of $\{x\colon u_t(x) = u_+\}$, $\nu_t$ is the normal velocity of this set, and $H_t$ its mean curvature vector. Finally, $I_\mathrm{nucl}$ takes into account the possible occurrence of nucleation events, corresponding to appearance of pieces of interfaces at some intermediate times. There are a few caveats in the previous statement. As emphasized in \cite{MR}, interfaces need to be counted with their multiplicity, and therefore the natural variable to describe the variational convergence of $(I_\eps)$ is not the order parameter $u$ but rather the general varifold (which does count multiplicity of interfaces) associated to it and the definition of \eqref{i0} has to be extended accordingly. While a $\Gamma$-$\liminf$ estimate for $(I_\eps)$ is proven in \cite{MR}, a corresponding $\Gamma$-$\limsup$ estimate is proven in \cite{KORV} only for special ``nice'' paths. To identify the $\Gamma$-limit it is thus needed a density theorem for the limiting functional $I_0$, which does not appear to be presently available.
 
In the present paper, we fix (suitable) sequences $\lambda_\eps,\gamma_\eps\to 0$ and consider directly the asymptotics of the stochastic equation \eqref{1.3b} for space dimension $d\le 3$. Under natural assumptions on the initial datum, we prove the large deviations upper bound with speed $\eps\lambda_\eps$ and rate function that, in the simplest case of interfaces with multiplicity one, reads,  
\begin{equation}
  \label{i0s}
  I(u) = \frac \tau4 \int_0^T\! \int_{\Sigma_t} \big| \nu_t
  - H_t\big|^2  \, \rmd \mc H^{d-1} \, \rmd t
  +I_\mathrm{sing} (u)\;.
\end{equation}

The rate function here derived improves the one introduced in \cite{MR} in two aspects. We provide a variational characterization of $I_\mathrm{sing}$ in \eqref{i0s} that is strictly larger of $I_\mathrm{nucl}$ in \eqref{i0}. With this characterization, it is readily seen that the zero level set of $I$ is given, as it should be, by the motions by mean curvature in the Brakke formulation. Besides, in describing the large deviations asymptotics, we do not only consider the general varifold associated to $u$, but include the order parameter $u$ itself. We show that the rate function $I$ is finite only if the map $t\mapsto u_t$ is continuos in $L^1$. This exclude the occurrence of spurious nucleation events; essentially, it implies that outside the jump set of $u$ only nucleations with even multiplicity are allowed. This cannot be detected by looking only at the varifold.

From a technical viewpoint, our results will be obtained by suitably blending arguments from the analysis of the action functional, mostly imported from \cite{MR} (which relies on previous results, e.g., \cite{Ilmanen,RS,HT}), with basic tools of stochastic calculus and large deviations estimates for Markov processes. The restriction $d\le 3$ is inherited both from the analysis of the regularity properties of the stochastic equation \eqref{1} \cite{BBP1}, and, as in \cite{MR}, from the validity of the static result in \cite{RS}. 

We remark that, although the model equations are quite different, our analysis has similar features to the one of stochastic conservation laws in \cite{Mariani}.  Finally, we mention that the large deviations asymptotics of a different stochastic perturbation of the Allen-Cahn equation has been recently analyzed in \cite{HR}. 

\section{Notation and results}
\label{sec:2}

We start by introducing, referring to \cite{Simon} for a detailed exposition, the tools from geometric measure theory that are relevant for our purposes. We denote by $\bb T^d$ the $d$-dimensional torus $\bb R^d/\bb Z^d$ and by $\rmd x$ the Haar measure on $\bb T^d$. In the sequel, we systematically identify functions (respectively measures) on $\mathbb{T}^d$ with $1$-periodic functions (respectively measures) on $\bb R^d$. Throughout the paper, we shall shorthand $L^p=L^p(\bb T^d)$, $p\in [1,+\infty]$, and let $H^s=H^s(\bb T^d)$, $s\in \bb R$, be the fractional Sobolev space. Finally, given a topological space $E$ we denote by $C_K(E)$ the set of continuous functions on $E$ with compact support and by $\mc B (E)$ its Borel $\sigma$-algebra. 

\subsection{Rectifiable measures}
We denote by $\mc M$ the set of (signed) Radon measures on $\bb T^d$, and by $\mc M_+$ its positive cone. Furthermore, we let $\mc H^{d-1}$ (respectively $\mc H^{d-1}_{\bb R^d}$) be the $(d-1)$-dimensional Hausdorff measure on $\bb T^d$ (respectively $\bb R^d$).

A set $M\subset\bb T^d$ is \emph{rectifiable} (more precisely ($d-1$)-countably rectifiable) iff there exists a countable collection $(\phi_k)$ of Lipschitz functions from $\bb R^{d-1}$ to $\bb T^d$ such that $\mc H^{d-1}(M\setminus  \bigcup_k \phi_k(\bb R^{d-1}))=0$. 

Given $\mu\in \mc M_+$, the \emph{tangent measure} (more precisely the ($d-1$)-dimensional tangent measure) of $\mu$ at $x\in \bb T^d$ is the positive Radon measure $T_x\mu$ on $\bb R^d$ defined by
\[
T_x\mu(\phi) := \lim_{\lambda \downarrow 0} \frac1{\lambda^{d-1}} \int_{\bb R^d} \phi\circ \eta_{x,\lambda}  \rmd\mu \; , \qquad\phi\in C_K(\bb R^d)\;,
\]
provided the limit exists, where $\eta_{x,\lambda}: \, \bb R^d \to \bb R^d$ is defined by  $\eta_{x,\lambda}(y) = \lambda^{-1}(y-x)$. 

\begin{defin}[Rectifiable and integral measures]
\label{def:0}
A measure $\mu\in \mc M_+$ is called \textit{rectifiable} (more precisely ($d-1$)-\textit{rectifiable}) if either of the following equivalent conditions is met.
\begin{itemize}
\item[a)] $\rmd\mu = \theta\, \rmd \mc H^{d-1}\res M $ for some rectifiable $\mc H^{d-1}$-measurable set $M$ and some $\theta \in L^1(\mc H^{d-1}\res M; (0,\infty))$.
\item[b)] For $\mu$-a.e.\ $x\in \bb T^d$, a tangent measure $T_x\mu$ exists, it is unique, and it is given by
\begin{equation}
\label{Theta}
T_x\mu = \theta(x) \mc H^{d-1}_{\bb R^d}\res \Sigma(x)\;,
\end{equation}
for some $(d-1)$-plane $\Sigma(x)$ of $\bb R^d$ and some strictly positive real 

$\theta \in L^1(\mc H^{d-1}\res M; (0,\infty))$.
\end{itemize}
The ($d-1$)-plane $\Sigma(x)$ in \eqref{Theta} is called the \textit{tangent plane} of $\mu$ at $x$ and will be denoted by $\tau_x\mu$. The real $\theta(x)$ is called the \textit{multiplicity} of $\tau_x\mu$ and will be denoted by $\theta(\mu,x)$. 

\noindent
A rectifiable measure $\mu$ is called \emph{integral} iff $\mu$-a.e.\ the multiplicity is an integer, i.e., $\theta(\mu, \cdot)\in \bb N$.
\end{defin}
We regard $BV(\bb T^d;\{\pm1\})$ as a subset of $L^1$. Given $u\in BV(\bb T^d;\{\pm1\})$, we denote by $S_u$ the so-called measure theoretic boundary of $\{u = 1\}$, i.e., the set of points where $u$ is essentially discontinuous, which is a rectifiable set. Moreover, by denoting with $|\nabla u|$ the total variation measure of $u$, it is a rectifiable integral measure and, more precisely, $|\nabla u| = 2 \mc H^{d-1}\res S_u$. Furthermore, there exists $\bs n\in L^1(|\nabla u|;\bb R^d)$ such that $\rmd\nabla u = \bs n\, \rmd|\nabla u|$ and, for $|\nabla u|$-a.e.\ $x$, $|\bs n(x)|=1$ and $\bs n(x) \perp \tau_x|\nabla u|$.

\subsection{Varifolds}

A \emph{general varifold} (more precisely, a general ($d-1$)-varifold) is a positive Radon measure on $\bb T^d \times \Lambda_{d-1}$, where $\Lambda_{d-1}$ is the Grassmanian manifold of unoriented $(d-1)$-planes in $\bb R^d$. We denote by $\mc V$ the set of all general varifolds. 

A general varifold $V\in \mc V$ can be disintegrated as $V(\rmd x,\rmd\Sigma) = \mu(\rmd x)\,\wp_x(\rmd\Sigma)$, where $\mu \in \mc M_+$ and, for $\mu$-a.e.\  $x\in \bb T^d$, $\wp_x$ is a probability measure on $\Lambda_{d-1}$. The measure $\mu$ is called the \emph{mass measure} of $V$ and will be denoted by $|V|$.	

In the sequel, we shall denote by $a\cdot b$ the inner product between the vectors $a,b\in\bb R^d$, and by $|a|$ the associate Euclidean norm. Given $a\neq 0$ we denote by $a^{\perp}$ the $(d-1)$-plane orthogonal to $a$. For $\Sigma\in \Lambda_{d-1}$, we also denote by $\Sigma$ the orthogonal projection onto $\Sigma$. 

The \emph{first variation} $\delta V$ of $V\in \mc V$ is the linear functional on $C^1(\bb T^d;\bb R^d)$ defined by
\[
\delta V(\eta) = \int\! \mathrm{Tr}(D\eta^\top\Sigma)\,  V(\rmd x,\rmd\Sigma)\;, \qquad \eta\in C^1(\bb T^d;\bb R^d)\;,
\]
where $D\eta$ is the Jacobian matrix of $\eta$ and the superscript $\top$ denotes transposition. If $\delta V$ is a $\bb R^d$-valued Radon measure, absolutely continuous with respect $|V|$, then $\delta V$ can be represented as 
\[
\delta V(\eta) = - |V|(\eta\cdot H)\;,
\]
for some $H\in L^1(\bb T^d,|V|; \bb R^d)$, which is called the (weak) \emph{mean curvature vector}.

A general varifold $V$ is \emph{rectifiable} iff there exists a rectifiable measure $\mu\in \mc M_+$ such that
\[
\int\! f(x,\Sigma) V(\rmd x, \rmd\Sigma) = \int\! f(x,\tau_x\mu) \, \mu(\rmd x)\;, \qquad f\in C(\bb T^d \times \Lambda_{d-1})\;.
\]
Note that if such $\mu$ exists then $\mu = |V|$ and $V(\rmd x,d\Sigma)= \mu(\rmd x) \delta_{\tau_x\mu} (\rmd \Sigma)$. 

Finally, a rectifiable varifold $V\in \mc V$ is called \emph{integral} iff $|V|$ is an integral measure. Observe that there is a one-to-one correspondence between integral varifolds and integral measures.

\smallskip 
Let $\mc M(\bb T^d\times \Lambda_{d-1})$ be the set of Radon measures on $\bb T^d\times \Lambda_{d-1}$ equipped with the total variation norm. Given $T>0$, we denote by $L^\infty([0,T];\mc M(\bb T^d\times \Lambda_{d-1}))$ the set of maps (up to a.e.\ equivalence) $t\mapsto V_t$ essentially bounded and weak*-measurable, i.e., such that $t\to V_t(f)$ is measurable for any $f\in C(\bb T^d\times \Lambda_{d-1})$. Notice that $L^\infty([0,T];\mc M(\bb T^d\times \Lambda_{d-1}))$ is the dual of the separable Banach space $L^1([0,T];C(\bb T^d\times \Lambda_{d-1}))$ \cite{TT}. Thus $L^\infty([0,T];\mc M(\bb T^d\times \Lambda_{d-1}))$ can be endowed with the bounded weak* topology; namely, by definition, a set is open iff its intersection with each bounded set is relatively open in the weak* topology. In addition, norm bounded subsets in $L^\infty([0,T];\mc M(\bb T^d\times \Lambda_{d-1}))$ are metrizable and precompact in the bounded weak* topology. We regard $\mc V$ as a subset of $\mc M(\bb T^d\times \Lambda_{d-1})$ and set,
\begin{equation}
\label{V:=}
\bs V := L^\infty([0,T];\mc V) \quad\text{endowed with the bounded weak* topology}\;, 
\end{equation}
i.e., $\bs V$ is the positive cone of $L^\infty([0,T];\mc M(\bb T^d\times \Lambda_{d-1}))$ endowed with the relative topology. Elements of $\bs V$ are denoted by $V=(V_t)_{t\in [0,T]}$. The following definition of $L^2$-flows has been introduced in \cite{MR}.

\begin{defin}[$L^2$-flows]
\label{def:1}
An element $V\in \bs V$ is called an $L^2$-flow provided it meets the following three conditions.
\begin{itemize}
\item[{a)}] For a.e.\ $t\in [0,T]$, $V_t$ is an integral varifold. 
\item[{b)}] $\displaystyle
\sup_\eta \Big\{ \int_0^T\! \delta V_t(\eta_t) \, \rmd t - \frac 12 \int_0^T\!  |V_t|(|\eta_t|^2) \, \rmd t\Big\} <\infty\;,
$
where the supremum is carried out over $\eta\in C^1([0,T]\times \bb T^d;\bb R^d)$. 

\item[{c)}] There exists $\nu \in L^2([0,T]\times\bb T^d, |V_t|\,\rmd t;\bb R^d)$ such that
\begin{equation}
\label{nut1}
\nu_t(x) \perp \tau_x|V_t| \qquad |V_t|\, \rmd t\text{ - a.e.} 
\end{equation}
and 
\begin{equation}
\label{nut2}
\sup_\psi
\: \int_0^T\, |V_t|\big(\partial_t \psi_t 
+  \nabla \psi_t \cdot \nu_t \big) \, \rmd t 
< + \infty\;,
\end{equation}
where the supremum is carried over all $\psi\in C^1_K((0,T)\times \bb T^d)$ such that $\|\psi\|_\infty\le 1$. 
\end{itemize} 
\end{defin}
By Riesz's representation lemma, b) implies that, for a.e.\ $t\in [0,T]$, $V_t$ admits a mean curvature vector $H_t$ and $(H_t)_{t\in [0,T]}$ belongs to $L^2([0,T]\times\bb T^d, |V_t|\,\rmd t;\bb R^d)$. Any vector $\nu \in L^2([0,T]\times\bb T^d, |V_t|\,\rmd t;\bb R^d)$ satisfying condition c) is called a \emph{velocity} of the $L^2$-flow $V$. As proven in \cite[Prop.~3.3]{MR}, $\nu$ is uniquely determined in the points $(t,x)\in (0,T)\times \bb T^d$ where both tangential planes $T_{(t,x)}|V|$ and $T_x|V_t|$ exist. However, it is not known whether this uniqueness set has full $|V_t|\rmd t$-measure. 

\begin{rem}
\label{rem:otto}
If $V$ is an $L^2$-flow then the map $ (0,T)\ni t\mapsto |V_t|(\phi)$ has bounded variation for each $\phi\in C^1(\bb T^d)$ and therefore, as observed in \cite[Rem.~3.2]{MR}, it is possible to choose a representative for which there exists a countable set $D_V\subset (0,T)$ such that the map $t\mapsto |V_t|(\phi)$ is continuous on $(0,T)\setminus D_V$ for any $\phi\in C^1(\bb T^d)$. Furthermore, in view of the mass bound $\mathrm{ess} \sup_{0<t<T} \| |V_t| \|_{TV} <\infty$, it is easy to construct a function $\mu:[0,T] \to \mc M_+$ such that $\mu_t= |V_t|$ for $t \in [0,T] \setminus D_V$ and $t \mapsto \mu_t(\phi)$ is c\`adl\`ag (or c\`agl\`ad), i.e., right-continuous with left limits, for every $\phi\in C(\bb T^d)$.

\end{rem}

\subsection{The model}
We consider the Allen-Cahn equation on the $d$-dimensional torus $\bb T^d$, $d\le 3$, with scaling parameter $\eps$ and double well potential $W$, stochastically perturbed by a space-colored noise that however becomes white in the limit $\eps\to 0$. 

The assumptions on the potential $W$, which have been tailored to include the paradigmatic case $W(u)=\frac14 (1-u^2)^2$, are detailed below.

\begin{assumption}[Assumptions on $W$]
\label{t:ws}
\begin{enumerate} 
\item $W\in C^2\big(\bb R ;[0,+\infty)\big)$, $W(u)=0$ iff $u=\pm 1$,
  $W''(\pm 1)>0$, and $W$ is uniformly convex at infinity, i.e.,
  there exists a constant $C\in (0,+\infty)$ and a compact $K\subset
  \bb R$ such that $W''(u) \ge \frac{1}{C}$ for any $u\not\in K$.
\item $W$ has at most growth $4$, i.e., there
  exists a constant $C\in (0,+\infty)$ such that $|W(u)|\le C
  (|u|^4+1)$ for any $u\in \bb R$. 
\item $W'$ has at most growth $3$, i.e., there
  exists a constant $C\in (0,+\infty)$ such that $|W'(u)|\le C
  (|u|^{3}+1)$ for any $u\in \bb R$. 
\item There
  exists a constant $C\in (0,+\infty)$ such that 
  $|W''(u)|\le C (\sqrt{W(u)}+1)$ for any $u\in \bb R$. 
\end{enumerate}
\end{assumption}

Hereafter, $u_\pm =\pm 1$ are the pure phases and $\tau = \int_{-1}^1\! \sqrt{2W(s)} \, \rmd s$ is the surface tension with $W$ satisfying the above assumptions.

The dynamics is specified by the stochastic partial differential equation,
\begin{equation}
  \label{1}
  \rmd u = \left[\Delta u - \frac1{\eps^2} W'(u) \right]\rmd t
  +  \sqrt{2\lambda_\eps}\, \rmd \alpha^\eps_t\;, 
\end{equation}
where $\lambda_\eps>0$ and $\alpha^\eps$ is the Gaussian process on $C([0,T];H^{-s})$, $s>d/2$, with mean zero and covariance,
\[
\bb E \big[\alpha^\eps_t(\varphi)\,\alpha^\eps_{t'}(\psi)\big] = t\wedge t'\,
\big\langle j_\eps*\varphi\;,\, j_\eps*\psi \big\rangle_{L^2}\;, \qquad
\varphi,\psi\in H^s\;,
\]
in which $j_\eps\in H^1$ is an approximation to the Dirac $\delta$, and $*$ denotes the convolution on $\bb T^d$. 

Given $T>0$, $\eps>0$, and $\bar u_0^\eps\in H^1$, as proven in \cite{BBP1}, there exists a unique process in $C([0,T];L^2)$ that solves the Cauchy problem for \eqref{1} with initial condition $\bar u_0$. We denote by $\bb P_\eps$ the law of this solution that, again by \cite{BBP1}, satisfies $\bb P_\eps (u \in C([0,T];H^1)\cap L^2([0,T];H^2))=1$. 
The main aim is to analyze the asymptotic behavior of \eqref{1} in the singular limit $\eps\to 0$ and $\lambda_\eps\to 0$. To carry out this analysis the following condition on $j_\eps$ and $\lambda_\eps$,
\begin{equation}
\label{2.5b}
\lim_{\eps\to 0} \left(\eps\lambda_\eps\big\|\nabla j_\eps\big\|_{L^2}^2 +
  \eps^{-1}\lambda_\eps \big\|j_\eps\big\|_{L^2}^2\right) = 0\;,
\end{equation}
is enforced through the paper. Notice, for instance, that if $j_\eps(\cdot) = \eps^{-\beta d}j(\cdot/\eps^\beta)$, $0<\beta\le 1$, for some $j \in H^1$, then \eqref{2.5b} holds when $\lambda_\eps = o(\eps^{1+ \beta d})$. 

The deterministic Allen-Cahn equation, i.e., \eqref{1} without noise, is the $L^2$-gra\-dient flow of the van der Waals' free energy functional $\mc F_\eps\colon L^1 \to [0,+\infty]$ defined by 
\begin{equation}
\label{F}
\mc F_\eps(u) := \begin{cases} {\displaystyle \int\!\Big[ \frac\eps2 |\nabla u|^2 + \frac 1\eps W(u) \Big]\,\rmd x} & \text{ if $u\in H^1$}, \\ \; +\infty & \textrm{ otherwise}\;. \end{cases}
\end{equation}
Observe that since $W$ has at most quartic growth and $d\le 3$, by Sobolev embedding, $u\in H^1$ implies $W(u)\in L^1$. 

Given $u\in L^2$, we introduce the \emph{free energy measure}, as the positive Radon measure on $\bb T^d$ defined by 
\begin{equation}
  \label{en}
  \mu_\eps^u(\rmd x) := 
\begin{cases}  
  {\displaystyle \Big[\frac \eps 2 |\nabla u|^2 + \frac 1\eps
  W(u)\Big] \rmd x} & \textrm{if } u\in H^1\;, \\
  \; 0  &  \textrm{otherwise}\;, 
\end{cases}
\end{equation}
and the associate general varifold
\begin{equation}
  \label{va}
  V_\eps^u(\rmd x,\rmd \Sigma) :=
   \begin{cases}
   \mu_\eps^u(\rmd x)\,
  \delta_{(\mathbf{n}^u)^\perp}(\rmd\Sigma) & \textrm{if } u\in H^1 \;, \\
\; 0  & \rm{otherwise}\,.
   \end{cases}
   \end{equation}
Here, the unit vector $\mathbf{n}^u$ is given by 
  \begin{equation}
  \label{nu=}
    \mathbf{n}^u:= 
   \begin{cases}
   {\displaystyle \frac{\nabla u}{|\nabla u|}\;, } & \textrm{if } \nabla u \neq 0\;, \\
  \; e_0 &  \textrm{otherwise}\;,  
\end{cases}
\end{equation}
where, for $u\in H^1$, the vector $\nabla u$ is defined $\rmd x$-a.e.\ and $e_0$ is an arbitrary fixed unit vector. In particular, $|V_\eps^u| = \mu_\eps^u$.

The initial datum  $\bar u_0^\eps$ is assumed deterministic and meeting the following conditions.

\begin{assumption}[Conditions on the initial datum] $~$
  \label{t:au0}
  \begin{itemize}
  \item[a)] $(\bar u^\eps_0)_{\eps>0}\subset H^1$ and 
    $\displaystyle{\varlimsup_{\eps\to 0} 
      \mc F_\eps(\bar u^\eps_0) <+\infty.}$ 
  \item[b)]  As $\eps\to 0$ the sequence $(\bar u^\eps_0)$
    converges in $L^1$ to some $\bar{u}_0\in BV(\bb T^d; \{\pm 1\})$. 
  \item[c)] 
    As $\eps\to 0$ the sequence $(\mu_\eps^{\bar{u}^\eps_0})$
    converges as Radon measure to some $\bar{\mu}_0$. 
  \end{itemize}
\end{assumption}

Observe that the requirement of the convergences in items b) and c) follows, possibly by extracting a subsequence, from the equi-boundedness in a), see, e.g., \cite{Modica}. 

Our aim is to investigate the asymptotic behavior of the sequence of probabilities $(\bb P_\eps)_{\eps>0}$ as $\eps\to 0$. To this end, set 
\begin{equation}
\label{U:=}
\bs U := C([0,T];L^1) \quad \text{endowed with the norm topology}\;,
\end{equation}
recall the definition \eqref{va} of the general varifold associated to a profile $u\in L^2$, and the definition of the space $\bs V$ in \eqref{V:=}. Given $u=(u_t)_{t\in [0,T]}\in C([0,T];L^2)$, we let $V_\eps^u\in \bs V$ be defined by $V_{\eps,t}^{u} = V_\eps^{u_t}$, $t\in[0,T]$, if $u\in C([0,T];H^1)$, and $V_{\eps,t}^{u} = 0$, $t\in[0,T]$, otherwise. Since $\bs V$ is endowed with the bounded weak* topology, by Lemma \ref{mesura} the map $C([0,T];L^2) \ni u \mapsto V_\eps^u \in \bs V$ is Borel measurable and therefore the map $C([0,T];L^2) \ni u \mapsto (u,V_\eps^u) \in \bs U \times \bs V$ is $\mc B(\bs U)\otimes \mc B(\bs V)$ measurable. Note that since $\bs U$ has a countable basis then $\mc B(\bs U \times \bs V) = \mc B(\bs U)\otimes \mc B(\bs V)$, see \cite[Lemma 6.4.2]{Bogachev}. We can thus regard $\big(\bb P_\eps \circ (u,V_\eps^u)^{-1}\big)_{\eps>0}$ as a sequence of probabilities on $(\bs U \times \bs V, \mc B(\bs U \times \bs V))$ and analyze its large deviations asymptotics as $\eps \to 0$. To formulate such large deviations principle, we need however a few more notation and definitions. 

\subsection{Admissible pairs}

It turns out that not all the elements in $\bs U\times \bs V$ are significant for the large deviations asymptotics and here we describe the relevant ones as cluster points of $\big((u^\eps, V_\eps^{u^\eps})\big)_{\eps>0}$ for suitable (deterministic) sequences $(u^\eps)_{\eps>0}$.  Unfortunately, this description is somewhat technically involved; it has been engineered to make the rate function of the large deviations upper bound (that we prove) as large as possible, and to guarantee its goodness (i.e., its coercivity and lower semicontinuity). The proof of a matching lower bound (that we do not discuss) should rely on a suitable density theorem. In this respect, the characterization of the rate function here provided might be of some help.

Given $u\in\bs{U}$ and $\delta>0$ we denote by $\omega^\infty(u;\delta)$ its continuity modulus, i.e.,
\begin{equation}
  \label{ominf}
  \omega^\infty(u;\delta) := 
  \sup_{\substack{t,s\in[0,T]\\ |t-s|\le \delta}}
  \| u_t -u_s\|_{L^1}\;.
\end{equation}
Given $z\in L^1([0,T])$, according to the Kolmogorov-Riesz-Fr\'echet compactness criterion (see, e.g., \cite[Thm.~4.26]{Brezis}), we let $\omega^1(z;\delta)$ be its $L^1$-continuity modulus regarding $L^1([0,T])$ as a subset of $L^1(\bb R)$, i.e.,
\begin{equation}
  \label{om1}
  \omega^1(z;\delta) := \sup_{\delta'\in(0,\delta]}
  \Big(\int_0^{\delta'}\! (|z_t|+|z_{T-t}|) \, \rmd t + \int_{\delta'}^T\!
  |z_t-z_{t-\delta'}|\, \rmd t \Big)\;.  
\end{equation}
Finally, we introduce the diffuse Willmore functional $\mc W_\eps\colon L^1 \to [0,+\infty]$, defined by 
\begin{equation}
\label{W}
\mc W_\eps(u) := \begin{cases} {\displaystyle \frac 1 \eps \int\! \Big(\eps \Delta u - \frac 1\eps W'(u) \Big)^2\,\rmd x} & \text{ if $u\in H^2$}. \\ +\infty & \text{ otherwise}. \end{cases}
\end{equation}
Observe that since $W'$ has at most cubic growth and $d\le 3$, by Sobolev embedding, $u\in H^1$ implies $W'(u)\in L^2$. 

In the following definition we fix $\alpha_2\in (0,\frac{1}{4d})$, $\alpha_3 \in (0,\frac 12)$, and a countable set $\{ \phi_j \}\subset C^1(\bb T^d)$, dense in the unit ball. The condition $\alpha_2 < \frac{1}{4d}$ is not optimal and due to technical issues.
\begin{defin}[Admissible pairs]
  \label{def:2}
  Recall the definitions of $\bs V$ and $\bs U$ in \eqref{V:=} and \eqref{U:=}.
  Given $\bs{\ell}=(\ell_1,\ell_2,\ell_3)\in \bb (0,\infty)^3$, let
  $\bs\Gamma_\bs{\ell}$, the set of \emph{$\bs\ell$-admissible
  pairs}, be the collection of elements in $\bs{U}\times \bs {V}$
  such that $(u,V) =\lim_\eps (u^\eps, V_\eps^{u^\eps})$ in the topology of $\bs{U}\times \bs {V}$ for some sequence $(u^\eps)_{\eps>0}\subset C([0,T];H^1)\cap L^2([0,T];H^2)$, $\eps \downarrow 0$, meeting the following
  conditions for any $\eps$ and for any $\delta \in (0,T]$,
  \begin{itemize}
  \item[a)]
    $u^\eps_0=\bar{u}^\eps_0$ with $(\bar{u}^\eps_0)_{\eps>0}$ as in
    Assumption~\ref{t:au0}.     
  \item[b)] 
    $\displaystyle{ 
      \sup_{t\in[0,T]} \mc F_\eps(u^\eps_t) + \int_0^T \! \mc
      W_\eps(u^\eps_t) \, dt  \le \ell_1 .}  $
  \item[c)] 
    $\displaystyle{
         \omega^\infty(u^\eps;\delta) \le
      \ell_2 \delta^{\alpha_2} \, .
    }$
  \item[d)] Letting $z^\eps(\phi)\in L^1([0,T])$ be defined by $z^\eps(\phi)_t := |V_{\eps,t}^{u^\eps}|(\phi)$, $\phi\in C^1(\bb T^d)$, then for any $1 \leq j \leq  \lfloor (\eps \lambda_\eps)^{-1} \rfloor$ we have $\displaystyle{
      \omega^1(z^\eps(\phi_j);\delta) 
      \le \|\phi_j\|_{C^1(\bb T^d)}\, \ell_3 \delta^{\alpha_3}}.$
      
 \end{itemize}
We also define $\bs\Gamma:=\bigcup_{\bs\ell} \bs\Gamma_{\bs\ell}$ that will be called the set of \emph{admissible pairs}. An element $V\in\bs V$ is called \emph{admissible} iff $(u,V)$ is an admissible pair for some $u\in\bs U$.
\end{defin} 

The next statement, which relies on results in \cite{Modica,RS}, as detailed in Appendix~\ref{app:b}, shows that in dimension $d\le 3$ the admissible pairs enjoy nice properties.

\begin{theorem} 
  \label{t:ap} 
Recall $\tau$ denotes the surface tension as defined in \eqref{tau}.  
For each $\bs\ell\in\bb (0, +\infty)^3$ the set $\bs\Gamma_{\bs\ell}$ is compact in $\bs U\times \bs V$. Furthermore, if $(u,V)\in \bs\Gamma_{\bs\ell}$ then $u_0=\bar{u}_0$ as in Assumption \ref{t:au0} and for any $\delta \in (0,T]$,
  \begin{itemize}
  \item[a)] $u\in L^\infty\big([0,T]; BV(\bb T^d; \{\pm 1\})\big)$, 
    $\essup_{t}\|u_t\|_\mathrm{TV}\le 2\ell_1/\tau$, 
        and \par
        $\displaystyle{
        \omega^\infty(u;\delta) \le
      \ell_2 \delta^{\alpha_2}\, ;
    }$
  \item[b)] $\essup_{t}\||V_t|\|_\mathrm{TV}\le \ell_1$
    and 
    \begin{itemize}
    \item[b.1)] for a.e.\ $t\in [0,T]$, $\tau^{-1}V_t$ is an integral
      varifold,
    \item[b.2)] for a.e.\ $t\in [0,T]$, $V_t$ admits a mean
      curvature $H_t$ which satisfies 
      
      $\int_0^T\!  |V_t|(|H_t|^2)\, \rmd t \le \ell_1$,
    \item[b.3)] for any $\phi \in C^1(\bb T^d)$ it holds $\displaystyle{
       \omega^1(z(\phi);\delta) 
      \le \|\phi\|_{C^1(\bb T^d)}\, \ell_3 \delta^{\alpha_3}}\;,$ where $z(\phi)_t := |V_t|(\phi)$;
    \end{itemize}
  \item[c)] for a.e.\ $t\in [0,T]$, $\frac 12 \, \rmd |\nabla u_t| \le \frac 1\tau \, \rmd |V_t|$.
\end{itemize} 
\end{theorem}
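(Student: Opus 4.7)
The heart of the argument is a compactness-and-identification statement: for any sequence $(u^\eps)_{\eps>0}\subset C([0,T];H^1)\cap L^2([0,T];H^2)$ satisfying a)--d) of Definition \ref{def:2} with parameters $\bs\ell$, the pair $(u^\eps, V_\eps^{u^\eps})$ admits a convergent subsequence in $\bs U \times \bs V$, and every limit $(u,V)$ meets the conclusions a)--c). Compactness of $\bs\Gamma_{\bs\ell}$ (metrizable, since we remain in a norm-bounded subset of $\bs V$) then follows by diagonal approximation: for $(u^n,V^n)\in \bs\Gamma_{\bs\ell}$ with approximating sequences $(u^{n,\eps})$, one picks $\eps_n\downarrow 0$ with $(u^{n,\eps_n},V_{\eps_n}^{u^{n,\eps_n}})$ close to $(u^n,V^n)$ and applies the first step.

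\textbf{Extraction of limits.} From b), $\sup_{t,\eps}\mc F_\eps(u^\eps_t)\le \ell_1$, so Modica's compactness criterion \cite{Modica} yields, at each fixed $t$, relative compactness of $\{u^\eps_t\}$ in $L^1$ with cluster points in $BV(\bb T^d;\{\pm 1\})$. Condition c) gives uniform equicontinuity of $u^\eps\colon [0,T]\to L^1$, so Ascoli--Arzel\`a produces a subsequence converging in $\bs U$ to some $u$; condition a) together with Assumption \ref{t:au0} then yields $u_0=\bar u_0$. Since $|V_{\eps,t}^{u^\eps}|(\bb T^d)=\mc F_\eps(u^\eps_t)\le \ell_1$, the varifolds $V_\eps^{u^\eps}$ lie in a norm-bounded subset of $L^\infty([0,T];\mc M(\bb T^d\times\Lambda_{d-1}))$, which is metrizable and compact in the bounded weak* topology; extract $V\in \bs V$. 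To bridge slicewise and global limits, observe that for each fixed $j$ eventually $j\le \lfloor(\eps\lambda_\eps)^{-1}\rfloor$, so d) gives a uniform $L^1([0,T])$-modulus for $z^\eps(\phi_j)_t=|V_{\eps,t}^{u^\eps}|(\phi_j)$. By the Kolmogorov--Riesz--Fr\'echet criterion and a diagonal extraction, $z^\eps(\phi_j)\to z(\phi_j)=|V_{\cdot}|(\phi_j)$ in $L^1([0,T])$ for every $j$, hence along a subsequence a.e.\ in $t$.

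\textbf{Items a), b), b.3), c).} For a): Modica's lower bound gives $\tfrac{\tau}{2}\|u_t\|_{TV}\le \liminf_\eps\mc F_\eps(u^\eps_t)\le \ell_1$, and the modulus inequality passes through the $\bs U$-convergence. The mass bound in b) is immediate from lower semicontinuity of the $L^\infty([0,T];\mc M)$-norm under bounded weak* convergence. Item b.3) is the passage of d) to the limit via the $L^1([0,T])$-convergence above, extended from $\{\phi_j\}$ to all $\phi\in C^1(\bb T^d)$ by density. Item c) follows from the localized Modica lower bound: at a.e.\ $t$, since $u^\eps_t\to u_t$ in $L^1$ and $\mu_\eps^{u^\eps_t}=|V_{\eps,t}^{u^\eps}|\to |V_t|$ weak*, the measure inequality $|V_t|\ge \tfrac{\tau}{2}|\nabla u_t|$ is the standard Modica estimate.

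\textbf{Items b.1), b.2); main obstacle.} This is the core of the theorem and relies on the R\"oger--Sch\"atzle theorem \cite{RS}, whose validity in $d\le 3$ explains the dimensional restriction. By b) and Fatou, $\liminf_\eps\mc W_\eps(u^\eps_t)<\infty$ for a.e.\ $t$. At such $t$, combined with $\mc F_\eps(u^\eps_t)\le \ell_1$, \cite{RS} produces an integral varifold $W_t$ as the limit of $\tau^{-1}V_{\eps,t}^{u^\eps}$ along a subsequence, admitting a weak mean curvature $H_t$ with $\tau |W_t|(|H_t|^2)\le \liminf_\eps\mc W_\eps(u^\eps_t)$. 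The delicate point is the identification $\tau W_t=V_t$: without condition d) the bounded weak* limit in $\bs V$ encodes only joint space-time information and slicewise agreement a.e.\ is not automatic; it is precisely the $L^1([0,T])$-convergence from d) that forces $|V_t|=\tau|W_t|$, and, combined with the slicewise varifold convergence, identifies the full varifolds. Integration in $t$ and Fatou then give $\int_0^T|V_t|(|H_t|^2)\,\rmd t\le \ell_1$. The technicalities are carried out in Appendix~\ref{app:b}.
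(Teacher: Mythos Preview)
Your overall plan, the extraction of limits, and the treatment of items a), b.3), c) and of compactness via a diagonal sequence match the paper's argument essentially verbatim. The divergence is in items b.1)--b.2), and there the proposal has a genuine gap.

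The paper does \emph{not} obtain b.2) from \cite{RS}. Instead it works directly at the space--time level: for $\eta\in C^1([0,T]\times\bb T^d;\bb R^d)$ one writes $\int_0^T\delta V^{u^\eps}_{\eps,t}(\eta_t)\,\rmd t$ via the first-variation/discrepancy identity \eqref{id6}, lets $\eps\to 0$ using the bounded weak* convergence of $V^{u^\eps}$, the vanishing of the discrepancy (Lemma~\ref{prop:2}), Cauchy--Schwarz, and the Willmore bound in Definition~\ref{def:2}~b). This yields $\big|\int_0^T\delta V_t(\eta_t)\,\rmd t\big|\le \ell_1^{1/2}\big(\int_0^T|V_t|(|\eta_t|^2)\,\rmd t\big)^{1/2}$ and hence, by Riesz, a mean curvature $H\in L^2(|V_t|\,\rmd t)$ for the \emph{space--time limit} $V$ itself, with no slicewise identification needed.

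Your route---apply \cite{RS} at a.e.\ $t$ to produce an integral varifold $W_t$ with curvature bound, then ``identify $\tau W_t=V_t$''---stumbles at the identification. Condition d) and Kolmogorov--Riesz--Fr\'echet give slicewise convergence of the \emph{mass measures} $|V^{u^\eps}_{\eps,t}|\to|V_t|$, but say nothing about test functions $f(x,\Sigma)$; so you cannot conclude slicewise varifold convergence, and the phrase ``combined with the slicewise varifold convergence'' is unjustified. Knowing only $|V_t|=\tau|W_t|$ does not force $V_t=\tau W_t$: a general varifold with rectifiable mass need not be rectifiable. The paper closes this gap by proving b.2) first (as above), so that $V_t$ has bounded first variation for a.e.\ $t$; then Allard's rectifiability theorem makes $V_t$ rectifiable, hence determined by $|V_t|$, and since \cite{RS} gives $\tau^{-1}|V_t|$ integral, $\tau^{-1}V_t$ is an integral varifold. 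In short: you need either the discrepancy identity (for b.2)) plus Allard (for b.1)), or a separate argument producing slicewise \emph{varifold} convergence; as written, the proposal has neither.
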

Statement c) could be improved. Indeed, arguing as in \cite[Thm.~1]{HT}, it should be actually possible to show that for a.e.\ $t\in [0,T]$ one has $|V_t|=\frac{\tau}2|\nabla u_t|+ \tau \tilde{\mu}_t$, where $\tilde{\mu}_t$ is a rectifiable measure with even multiplicity.

\subsection{Brakke motion}

For the present purpose of describing the asymptotic behavior of the stochastically perturbed Allen-Cahn equation, we adopt a slightly different definition of (weak) motion by mean curvature with respect to the one of Brakke motion \cite{Brakke}.
 
\begin{defin}[Brakke motion]
  \label{t:bm}
  Given a Radon measure $\bar{\mu}_0 \in \mc M_+$, an element $V\in
  \bs V$ is called a \emph{Brakke motion} with initial datum
  $\bar \mu_0$ iff $V$ is admissible and for each $\psi \in C^1_K\big([0,T)\times \bb T^d;\bb R_+\big)$,
    \begin{equation}
    \label{inbra}
     -\bar \mu_0(\psi_0) +\int_0^T\! 
     |V_t|\big( -\partial_t \psi_t - H_t\cdot \nabla\psi_t 
     + |H_t|^2 \psi_t\big) \, \rmd t \le 0\;,
    \end{equation}
    where $H_t$ is the mean curvature vector of $V_t$. 
    Given $\bar u_0\in BV(\bb T^d;\{\pm 1\})$ and $\bar{\mu}_0 \in \mc M_+$, a pair $(u,V)\in \bs U\times \bs V$ is
    called an \emph{enhanced Brakke motion} with initial
    datum $(\bar u_0,\bar \mu_0)$ iff $(u,V)$ is an
    admissible pair, $V$ is a Brakke motion with
    initial datum $\bar \mu_0$, and 
    $u_0 = \bar u_0$ (compare with \cite[Sect.~12]{Ilmanen} and \cite{Ilmanen0}).  
\end{defin}

In view of Theorem \ref{t:ap}, if $V$ is admissible then it admits a mean curvature vector in $L^2([0,T]\times\bb T^d, |V_t|\,\rmd t;\bb R^d)$, which implies that the above definition is well posed. Moreover, if $V$ is a Brakke motion then $\tau^{-1}V$ is an $L^2$-flow with velocity $\nu =H$. Indeed, for a suitable choice of the positive test function, the inequality \eqref{inbra} easily implies \eqref{nut2}, while the orthogonality condition \eqref{nut1} follows from orthogonality of the mean curvature vector for integral varifolds \cite[Chap.~5, pag.~121]{Brakke}.

It is possible to show that the previous definition of Brakke motion implies the usual one. More precisely, if $V$ is a Brakke motion with initial datum $\bar \mu_0$ and $\mu_t$ is the c\`agl\`ad representative of $|V_t|$, $t\in [0,T]$, introduced in Remark \ref{rem:otto}, then for each $\phi\in C^1(\bb T^d)$ and each $t\in[0,T)$,
  \begin{equation*}
    \varlimsup_{s\to t} \frac{\mu_s(\phi) -\mu_t(\phi)}{s-t} 
    \le \mu_t\big( H_t\cdot \nabla\phi  - |H_t|^2 \phi \big)\;,
  \end{equation*}
where we understand that the right-hand side is $-\infty$ for the (zero measure) set of times such that either $H_t$ does not exist or does not belong to $L^2(\mu_t;\bb R^d)$, see \cite[Thm.~7.1]{ES}. Furthermore, \eqref{inbra} implies, in consistence with the possible instantaneous disappearance of mass, the inequality $\mu_0 \leq \bar{\mu}_0$ as Radon measures. 
 
\subsection{The rate function}

If $V\in \bs V$ is admissible, $\tau^{-1}V$ is an $L^2$-flow, and $\nu$ is a velocity of $\tau^{-1}V$, we set, 
\begin{eqnarray}
  \label{Iac}
&&  \!\!\!\!\!\!\!\! I_\mathrm{ac}(V,\nu) := \frac14
  \int_0^T \! |V_t|\big( \big|\nu_t - H_t\big|^2\big)\, \rmd t
\\ 
  \label{Ising}
&&   \!\!\!\!\!\!\!\! I_\mathrm{sing}(V,\nu) := 
  \sup_\psi \Big\{ - \bar \mu_0(\psi_0) +
  \int_0^T\!|V_t|\big(-\partial_t\psi_t -\nu_t \cdot \nabla\psi_t +
  \nu_t \cdot H_t\, \psi_t \big)\, \rmd t\Big\}\;,\qquad\;
\end{eqnarray}
where the supremum is carried out over all $\psi\in C^1_K([0,T)\times\bb T^d)$ such that $0\le\psi\le 1$.

Recall that $\bs\Gamma$ denotes the set of admissible pairs, see Definition \ref{def:2}, and let $I\colon \bs U\times \bs V \to [0,+\infty]$ be the functional defined by
\begin{equation}
  \label{I=}
  I (u,V) :=
  \begin{cases}
    {\displaystyle \inf_\nu} \{I_\mathrm{ac}(V,\nu) + I_\mathrm{sing}(V,\nu)\} & \textrm{if $(u,V)\in
      \bs\Gamma$, $\tau^{-1}V$ is an $L^2$-flow,} \\
      + \infty & \textrm{otherwise,}
  \end{cases}
\end{equation}
where the infimum is taken over all the possible velocities of $V$.

It is simple to check that $I(u,V)=0$ iff $(u,V)$ is an enhanced Brakke motion with initial datum $(\bar u_0,\bar \mu_0)$ according to Definition~\ref{t:bm}. 

Let $t\mapsto\mu_t$ be the c\`adl\`ag representative of $t\mapsto|V_t|$ introduced in Remark \ref{rem:otto} and denote by $D_V$ its jump set. By localizing the test function $\psi$ in the variational definition \eqref{Ising} around the set $D_V$, we deduce that
\begin{equation}
  \label{mt}
  I_\mathrm{sing}(V,\nu) \ge 
  \sup_\phi \Big(\mu_0 (\phi) - \bar\mu_0(\phi) \Big) 
  + \sum_{t\in D_V } \sup_\phi
  \Big(\mu_t (\phi) - \mu_{t^-} (\phi) \Big) =: I_{\rm nucl}(V)\;,
\end{equation}
where the suprema are carried out over all $\phi\in C^1(\bb T^d)$ such that $0\le \phi\le 1$. The right-hand side in \eqref{mt} is the nucleation part of the rate function introduced in \cite{KORV,MR}. The inequality $I_\mathrm{sing}\ge I_{\rm nucl}$ is strict. Consider indeed an element $V$ such the map $t\mapsto\mu_t$ has no jumps, but $t\mapsto\mu_t(\phi)$ has a derivative  with nontrivial positive Cantor part for some $\phi\in C^1(\bb T^d)$, then $I_\mathrm{sing}(V)>0$ while the right-hand side of \eqref{mt} vanishes. To our knowledge, the possible occurrence of paths $t\mapsto\mu_t$ such that $\mu_t(\phi)$ has a Cantor part for some $\phi\in C^1(\bb T^d)$ cannot be ruled out even in the context of the derivation of Brakke motion as singular limit of the deterministic Allen-Cahn equation. It would be interesting to establish a connection between the rate function \eqref{I=} and the one recently introduced in \cite{MR1}, that is defined by a somewhat analogous variational expression.

\subsection{Large deviations upper bound}

For the reader convenience, we first recall the large deviations axiomatic, see, e.g., \cite{DZ}. Let $(\mc P_\eps)$ be a family of probability measures on a Hausdorff topological space $\mc X$. The family $(\mc P_\eps)$ satisfies the \emph{good large deviations principle} with speed $\beta_\eps\downarrow 0$ and rate function $J\colon \mc X\to [0,+\infty]$ iff the following conditions are met.
\begin{itemize}
\item[i)]({\sl Goodness}) $J$ has compact sub-level sets. 
\item[ii)]({\sl LD upper bound}) For each closed set $C\subset \mc X$,
$\displaystyle
\varlimsup_{\eps\to 0} \beta_\eps \log\mc P_\eps(C) \le - \inf_{C}J
$.
\item[iii)]({\sl LD lower bound}) For each open set $A\subset \mc X$,
$\displaystyle
\varliminf_{\eps\to 0} \beta_\eps \log\mc P_\eps(A) \ge - \inf_{A}J
$.
\end{itemize}
The large deviations estimates ii) and iii) give a precise sense to the (logarithmic) asymptotics $\mc P_\eps(B) \asymp \exp\{-\beta_\eps^{-1} \inf_B J\}$. Observe that if the zero level set of $J$ is the singleton $\{s_0\}$ then the large deviations upper bound together with the goodness of $J$ imply the law of large numbers $\mc P_\eps\to \delta_{s_0}$ (weakly as probability measure), together with an exponential control on the error.

In the setting of the stochastic Allen-Cahn approximation to the mean curvature flow, the following theorem provides a large deviations upper bound.

\begin{theorem}[LD upper bound]
  \label{thm:1}
  Let $d\le 3$ and $\bb P_\eps$ be the law of the solution to
  \eqref{1} with initial condition $\bar u_0^\eps$. The sequence of
  probabilities $\big(\bb P_\eps \circ (u,V_\eps^u)^{-1}\big)$ on
  $\bs U \times \bs V$ satisfies a large deviations
  upper bound with speed $\eps\lambda_\eps$ and good rate function
  $I\colon \bs U \times \bs V\to
  [0,+\infty]$ given by \eqref{I=}. Namely, $I$ has compact sub-level sets and, for each
  closed set $\bs C\subset \bs U \times \bs V$,
  \begin{equation}
  \label{rigetto}
    \varlimsup_{\eps\to 0} \eps\lambda_\eps
    \log\bb P_\eps\big\{(u,V_\eps^u)\in \bs C\big\} \le -\inf_{\bs C} I\;.
  \end{equation}
\end{theorem}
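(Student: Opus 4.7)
The plan is to follow the classical two-step scheme for LD upper bounds: (a) goodness of $I$ together with exponential tightness of $\big(\bb P_\eps\circ(u,V_\eps^u)^{-1}\big)$ on the admissible sets $\bs\Gamma_{\bs\ell}$, and (b) a weak upper bound at each point obtained via exponential tilting. Goodness follows from Theorem~\ref{t:ap}: the variational form of $I$ plus the $L^2$-control on the mean curvature provided by $I_\mathrm{ac}$ force any sublevel set $\{I\le M\}$ into some compact $\bs\Gamma_{\bs\ell}$. Lower semicontinuity combines the sup-form of $I_\mathrm{sing}$ with Fatou for $I_\mathrm{ac}$, once one verifies that the infimum over velocities $\nu$ in \eqref{I=} is attained --- which follows by weak compactness in $L^2([0,T]\times\bb T^d, |V_t|\rmd t;\bb R^d)$ along a minimizing sequence of admissible velocities.

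The engine for exponential tightness is It\^o's formula applied to functionals of $u$. For the free energy one obtains
\[
\mc F_\eps(u_t)+\int_0^t\mc W_\eps(u_s)\,\rmd s = \mc F_\eps(u_0) + N^{(1)}_t + R^{(1)}_t,
\]
where the It\^o correction $R^{(1)}$ is of order $\lambda_\eps\big(\eps\|\nabla j_\eps\|_{L^2}^2+\eps^{-1}\|j_\eps\|_{L^2}^2\big)\,t$ and vanishes by \eqref{2.5b}, while $N^{(1)}$ is a martingale with quadratic variation $\lesssim \lambda_\eps\eps\int_0^t\mc W_\eps(u_s)\,\rmd s$. An exponential Bernstein inequality at speed $\eps\lambda_\eps$ then yields superexponential control on $\sup_t\mc F_\eps(u_t)+\int_0^T\mc W_\eps(u_s)\,\rmd s$. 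Analogous It\^o computations for $t\mapsto\mu_\eps^{u_t}(\phi_j)$ and for $\|u_t-u_s\|_{L^1}$-type quantities control the moduli $\omega^1(z^\eps(\phi_j),\cdot)$ and $\omega^\infty(u,\cdot)$, so that $\bb P_\eps\big((u,V_\eps^u)\notin\bs\Gamma_{\bs\ell}\big)\le e^{-M/(\eps\lambda_\eps)}$ for any $M$, provided $\bs\ell$ is chosen large enough.

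For the weak upper bound at $(u,V)\in\bs\Gamma$, I would tilt by test functions $\psi\in C^1_K((0,T)\times\bb T^d)$, $0\le\psi\le 1$. It\^o's formula for $t\mapsto\mu_\eps^{u_t}(\psi_t)$ delivers an identity whose drift $D^\psi_\eps$ combines the Willmore-type contribution $\eps^{-1}\int\psi(\eps\Delta u-\eps^{-1}W'(u))^2\,\rmd x$ with the normal-transport term $\eps\int\nabla\psi\cdot\nabla u\,(\Delta u-\eps^{-2}W'(u))\,\rmd x$, and whose martingale part $N^\psi$ has quadratic variation of order $\eps\lambda_\eps$ on $\bs\Gamma_{\bs\ell}$. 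The associated exponential supermartingale has expectation at most one, and Chebyshev's inequality at speed $\eps\lambda_\eps$ produces a superexponential deviation bound for the linear functional appearing in the drift. Taking the supremum over $\psi$ and complementing with test vector fields $\eta\in C^1([0,T]\times\bb T^d;\bb R^d)$ to extract the velocity $\nu$ of the $L^2$-flow $\tau^{-1}V$ via \eqref{nut2}, one matches, upon passing to the limit $\eps\to 0$, the variational expression in \eqref{Iac}--\eqref{Ising}.

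The main obstacle is to pass to the limit in $D^\psi_\eps$ along admissible sequences $(u^\eps,V_\eps^{u^\eps})\to(u,V)$. Two $\liminf$ inequalities are required: the dynamic R\"oger-Sch\"atzle-type bound
\[
\liminf_\eps \frac{1}{\eps}\int_0^T\!\int\psi_t(\eps\Delta u^\eps-\eps^{-1}W'(u^\eps))^2\,\rmd x\,\rmd t\ge\tau\int_0^T\!|V_t|(\psi_t|H_t|^2)\,\rmd t,
\]
adapting \cite{RS} in the spirit of \cite{MR}, and a parallel identification of the normal-transport term with $\tau\int_0^T|V_t|(\nu_t\cdot\nabla\psi_t)\,\rmd t$ for a \emph{single} velocity $\nu$ compatible with all $\psi$. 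Producing such a common $\nu$ forces the infimum over velocities in \eqref{I=} and a min-max exchange justified by compactness of $\bs\Gamma_{\bs\ell}$. A further subtlety, addressed by retaining $u$ alongside the varifold $V$, is to rule out spurious nucleation: the $L^1$-continuity of Definition~\ref{def:2}(c) precludes odd-multiplicity nucleation off the jump set of $u$, which is invisible at the varifold level.
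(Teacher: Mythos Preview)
Your outline captures the broad strokes---energy estimate via It\^o, exponential tightness on $\bs\Gamma_{\bs\ell}$, exponential tilting with test functions $(\psi,\eta)$, and a minimax step---and these match Sections~\ref{sec:3} and~\ref{sec:4} of the paper. But the obstacle you flag at the end, ``producing a common $\nu$ compatible with all $\psi$'', is precisely the point where your proposal is a genuine gap rather than a subtlety, and it is where the paper's argument diverges from yours in an essential way.

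The paper does \emph{not} try to identify the normal-transport term with $\int_0^T |V_t|(\nu_t\cdot\nabla\psi_t)\,\rmd t$ by passing to the limit along admissible sequences. Instead it introduces the stochastic current $J_\eps^u\in\bs H^{-\bs s}$ (Section~\ref{sec:3.3}) as a \emph{third} random variable, proves a large deviations upper bound for the triple $Z_\eps^u=(u,V_\eps^u,J_\eps^u)$ on $\bs Z=\bs U\times\bs V\times\bs H^{-\bs s}$ with an explicit rate $\mc I$ (Theorem~\ref{thm:in}), and then obtains $I$ by the contraction principle, $I(u,V)=\inf_J\mc I(u,V,J)$. The point is that the tilted functional $\mc I^{\eta,\chi,\psi}$ in \eqref{mcI} is written entirely in terms of $J$, $\delta V$, and $|V|$, hence is \emph{continuous} on $\bs Z$; no R\"oger--Sch\"atzle-type $\liminf$ is needed in the tilting step (only the vanishing of the discrepancy, Lemma~\ref{prop:2}). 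The identification of $J$ with a velocity $\nu$ of an $L^2$-flow happens \emph{after} the minimax, via the a~priori bounds $\bs F_m$, $\bs G_m$ and the Riesz representation \eqref{Iboh3}. Your approach, by contrast, asks for this identification \emph{uniformly in $\psi$} at the $\eps$-level, and the measure-function-pairs machinery of \cite{H,MR} that would do this deterministically requires an $L^2$ bound on $\partial_t u$, which is unavailable stochastically (cf.\ the discussion opening Section~\ref{sec:3.3}).

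A second, related gap is your argument for goodness of $I$. Lower semicontinuity of $(u,V)\mapsto\inf_\nu\{I_{\mathrm{ac}}+I_{\mathrm{sing}}\}$ cannot be obtained by ``weak compactness in $L^2([0,T]\times\bb T^d,|V_t|\rmd t)$ along a minimizing sequence'': along a sequence $(u_n,V_n)\to(u,V)$ the underlying $L^2$ spaces change with $n$, and there is no obvious compactness for the associated $\nu_n$. The paper again sidesteps this by proving goodness of $\mc I$ on the fixed Hilbert-scale $\bs H^{-\bs s}$ (end of the proof of Theorem~\ref{thm:in}) and inheriting goodness of $I$ from the contraction principle.
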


As a corollary of this result, we deduce that the cluster points of the sequence $\big(\bb P_\eps \circ (u,V_\eps^u)^{-1}\big)$ are supported by the enhanced Brakke motions with initial datum $(\bar u_0,\bar \mu_0)$, in the sense of Definition~\ref{t:bm}. Even in the two-dimensional case there are well-known examples in which uniqueness for Brakke mean curvature flow fails, see, e.g., \cite{Brakke,Ilmanen2}. We have thus not obtained a genuine law of large numbers for the stochastically perturbed Allen-Cahn equation. The reasonable hope, but apparently quite impervious to pursue, is that the stochastic perturbation selects the physical motions. In this respect, Theorem~\ref{thm:1} shows that the set of all possible Brakke motions can be achieved with probability not exponentially small, but gives no further informations about the limiting probability laws on this set.

\subsection{Discrepancy measure}
\label{sec:3.2b}

A crucial technical ingredient in the Allen-Cahn approximation of the mean curvature flow is the \textit{limiting equipartition of energy}. For later use, we recall here the precise statement. Given $u\in L^2$ we introduce the \emph{discrepancy measure} as the signed Radon measure on $\bb T^d$ defined by 
\begin{equation}
\label{dm}
\rmd \xi^u_\eps := \begin{cases} \Big(\frac \eps 2 |\nabla u|^2 - \frac 1\eps W(u)\Big) \rmd x\;,  & \textrm{if } u\in H^1 \;, \\
\; 0  & \rm{otherwise}\,.
\end{cases}
\end{equation}
Given $u\in C([0,T];L^2)$ we let $\xi^u_\eps\in L^\infty([0,T];\mc M(\bb T^d))$ be defined by $\xi^u_{\eps,t}=\xi^{u_t}_\eps$ if $u\in C([0,T];H^1)$ and $\xi^u_{\eps,t}= 0$ otherwise. We observe that, as it is well known, the so-called monotone one dimensional entire stationary solutions of the deterministic Allen-Cahn equation satisfy the equipartition of energy $\frac\eps 2|\nabla u|^2 = \frac 1\eps W(u)$. The discrepancy measure quantifies the violation of this equipartition property.  

The following statement is the content of \cite[Prop.~6.1]{MR}.
\begin{lem}
\label{prop:2}
Fix $\ell_1>0$ and let $(u^\eps)$ be a sequence meeting condition \emph{b)} in Definition \ref{def:2}. Then,
\[
\lim_{\eps\to 0} \int_0^T \!\big\| \xi^{u^\eps}_{\eps,t} \big\|_\mathrm{TV} \, \rmd t =0\;.
\] 
\end{lem}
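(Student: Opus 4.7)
I would reduce the claim to a static quantitative equipartition-of-energy estimate and then dispatch the latter via the Willmore bound from condition b) in Definition \ref{def:2}. Setting $a := \sqrt{\eps/2}\,|\nabla u|$ and $b := \sqrt{W(u)/\eps}$, the discrepancy density factors as $a^2 - b^2 = (a-b)(a+b)$, and Cauchy--Schwarz in space gives
\[
\|\xi^{u}_\eps\|_{\mathrm{TV}} = \int |a-b|\,(a+b)\,\rmd x \leq \|a-b\|_{L^2}\,\|a+b\|_{L^2}\leq \sqrt{2\mc F_\eps(u)}\,\sqrt{\mc F_\eps(u) - \|\nabla\Phi(u)\|_{L^1}},
\]
where $\Phi(u):=\int_{-1}^u \sqrt{2W(s)}\,\rmd s$, using $(a+b)^2\leq 2(a^2+b^2)$ for the second factor and the expansion $\|a-b\|_{L^2}^2 = \mc F_\eps(u) - \int\sqrt{2W(u)}|\nabla u|\,\rmd x = \mc F_\eps(u) - \|\nabla\Phi(u)\|_{L^1}$ for the first. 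Integrating in $t$, applying Cauchy--Schwarz in time, and invoking $\sup_t \mc F_\eps(u^\eps_t) \leq \ell_1$, the claim reduces to showing that $\int_0^T D_\eps(t)\,\rmd t \to 0$, where $D_\eps(t) := \mc F_\eps(u^\eps_t) - \|\nabla\Phi(u^\eps_t)\|_{L^1}$ is the equipartition defect, uniformly bounded above by $\ell_1$.

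To control this defect I would decompose $[0,T]$ by Willmore level. For fixed $M>0$, set $B_\eps := \{t : \mc W_\eps(u^\eps_t) > M\}$; Chebyshev combined with $\int_0^T \mc W_\eps(u^\eps_t)\,\rmd t \leq \ell_1$ yields $|B_\eps| \leq \ell_1/M$, so $\int_{B_\eps} D_\eps\,\rmd t \leq \ell_1^2/M$. On $B_\eps^c$ both $\mc F_\eps(u^\eps_t)$ and $\mc W_\eps(u^\eps_t)$ are uniformly bounded (by $\ell_1$ and $M$), and a compactness--contradiction argument based on Hutchinson--Tonegawa / R\"oger--Sch\"atzle rigidity --- any $L^1$-cluster point of a sequence $(w^\eps)$ with uniformly bounded $\mc F_\eps(w^\eps)$ and $\mc W_\eps(w^\eps)$ is an integral $(d-1)$-rectifiable varifold, and the blow-ups at regular points coincide with the $1$D optimal profile, which saturates the equipartition $a=b$ --- yields $\sup_{t\in B_\eps^c} D_\eps(t) \to 0$ as $\eps\to 0$. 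Letting $\eps\to 0$ and then $M\to\infty$ concludes.

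The main technical obstacle is precisely this uniform equipartition estimate on $B_\eps^c$; it requires the full static analysis of diffuse interfaces with bounded Willmore functional, and is in fact the ingredient that confines the proof to dimension $d\leq 3$ via the R\"oger--Sch\"atzle theorem already cited in the introduction.
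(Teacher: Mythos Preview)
The paper does not provide its own proof of this lemma; it simply records that the statement is Proposition~6.1 of \cite{MR}. Your outline is a correct reconstruction of the argument: reduce the time-integrated claim to the static vanishing of discrepancy under a simultaneous bound on $\mc F_\eps$ and $\mc W_\eps$, and invoke \cite{RS} for the latter. The Chebyshev/level-set decomposition of $[0,T]$ by Willmore size is precisely the mechanism by which the integrated Willmore bound in condition~b) of Definition~\ref{def:2} is converted into a pointwise one on a set of nearly full measure, after which a compactness--contradiction reduction to the static theorem is legitimate.

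Two minor remarks. First, the passage through the defect $D_\eps = \|a-b\|_{L^2}^2$ is a harmless but unnecessary detour: since $|a-b|\le a+b$ one has $D_\eps(t) \le \|\xi_{\eps,t}^{u^\eps}\|_{\mathrm{TV}} \le \mc F_\eps(u^\eps_t)$, so the same level-set decomposition applied directly to $\|\xi_{\eps,t}^{u^\eps}\|_{\mathrm{TV}}$ works once the static result from \cite{RS} is available. Second, your heuristic that the static vanishing follows because ``blow-ups coincide with the 1D optimal profile'' inverts the actual logic of \cite{RS}: there the total-variation vanishing of the discrepancy is established first, via monotonicity-type PDE estimates, and is then used as input to prove rectifiability and integrality of the limit varifold. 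This does not affect the validity of your argument, since you ultimately use \cite{RS} as a black box for the static step, but the phrasing suggests a direction of implication that is not how the proof actually runs.
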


\subsection{Stochastic currents}
\label{sec:3.3}

The definition of curvature has been given for general varifolds and reduces to the classical one when the varifold is rectifiable, its multiplicity is constant, and it is  supported by a smooth surface of codimension one. On the other hand, given $V=(V_t)_{t\in [0,T]}\in \bs V$, its associated velocities are defined only if $\tau^{-1}V$ is an $L^2$-flow, in particular only if $V_t$ is rectifiable for a.e.\ $t\in [0,T]$. Therefore, for $\eps>0$, the velocity of the path $(V_{\eps,t}^u)_{t\in [0,T]}$ has been not defined yet. A similar issue is also present in \cite{MR}, where the velocity for $\eps>0$ is defined to be proportional to $-\eps \nabla u_t\, \partial_t u_t$. By using the measure-function pairs theory developed in \cite{H}, which requires an $L^2$-estimate on $\partial_t u$, in \cite{MR} it is then shown that the limit of such velocities exists in a suitable sense and converges to a velocity of the limiting $L^2$-flow. 

In the present stochastic case, the above strategy is not directly applicable, due to the lack of control of the time derivative of the process. For $\eps>0$, we next define, with $\bb P_\eps$-probability one, the velocity of the general varifold $V_\eps^u$ as a stochastic current, and regard it as a separate variable in the large deviations principle. Relying on suitable super-exponential bounds, at the end of the argument, we are able to show that currents can be represented in terms of velocities of $L^2$-flows.

The stochastic current is defined as follows. Given $s\in\bb R$, let $H^s(\Lambda_{d-1};\bb R^d)$ be the vector-valued fractional Sobolev space on $\Lambda_{d-1}$. For $s\ge 0$, it can be defined as the domain of $(I-\Delta)^{s/2}$ on $L^2(\Lambda_{d-1};\bb R^d)$ equipped with the graph-norm. Here $\Delta$ denotes the Laplace-Beltrami operator on $\Lambda_{d-1}$ endowed with the standard Riemaniann metric. As usual $H^{-s}(\Lambda_{d-1};\bb R^d)$, $s>0$, is defined as the dual of $H^s(\Lambda_{d-1};\bb R^d)$. Observe that if $s>\frac{d-1}2$ then $H^s(\Lambda_{d-1};\bb R^d)\hookrightarrow C(\Lambda_{d-1};\bb R^d)$. Given $s\in \bb R$ and an Hilbert space $H$, we denote by $H^{s}(\bb T^d;H)$ the $H$-valued fractional Sobolev space on $\bb T^d$. It can be defined in terms of the $H$-valued Fourier series on $\bb T^d$ with the usual norm. For $s>\frac d2$ we have $H^s(\bb T^d;H)\hookrightarrow C(\bb T^d;H)$. Given $s\in (-1,1)$ and an Hilbert space $H$, we also let $H^s([0,T]; H)$ be the $H$-valued fractional Sobolev space on $[0,T]$. For $s\in [0,1)$, it can be defined via the standard Gagliardo norm, while, as usual, $H^{-s}([0,T];H)$, $s\in (-1,0)$, is defined, letting $H'$ be the dual of $H$, as the dual of $H^{s}([0,T];H')$. For $s>\frac 12$ we have $H^{s}([0,T];H)\hookrightarrow C([0,T];H)$. Finally, given $\bs s = (s_1,s_2,s_3)\in (-1,1)\times \bb R^2$, we set 
\begin{equation}
\label{Hs:=}
\bs H^{\bs s} := H^{s_1}([0,T];H^{s_2}(\bb T^d;H^{s_3}(\Lambda_{d-1};\bb R^d)))\;.
\end{equation}  
Observe that if $\bs s\in (\frac 12,1)\times (\frac d2,\infty)\times (\frac{d-1}2,\infty)$ then $\bs H^{\bs s}\hookrightarrow C([0,T]\times \bb T^d\times \Lambda_{d-1};\bb R^d)$.

For $\bs s\in (\frac 12,1)\times (\frac d2 ,+\infty)\times (\frac{d-1}2,+\infty)$ and $f\in \bs H^{\bs s}$ we define,
\begin{equation}
\label{J}
J_\eps^u(f) := - \eps\int_0^T\! \langle \nabla u_t\cdot f_t(\cdot,(\mathbf{n}^u)^\perp),\rmd u_t \rangle_{L^2}\;,
\end{equation}
where we recall that $\mathbf{n}^u$ has been defined in \eqref{nu=} and the right-hand side is $\bb P_\eps$-a.s.\ defined as an It\^o's stochastic integral with respect to the semimartingale $u$ (for the latter notions see, e.g., \cite[Chap.~4]{DZ}). As follows from the theory of stochastic currents for \eqref{1} developed in Appendix \ref{app:c} (analogous to the analysis of stochastic currents for finite dimensional diffusions in \cite{FGGT}), the map $f\mapsto J_\eps^u(f)$ defines, with $\bb P_\eps$-probability one, a linear functional on $\bs H^{\bs s}$. We shall denote by $u\mapsto J_\eps^u$ the associated $\bs H^{-\bs s}$-valued random variable.

\begin{rem}
\label{rem:2}
Let $\bs L$ be the closure of the linear subspace of $\bs H^{\bs s}$ of functions of type $f_t(x,\Sigma) = \Sigma \, \eta_t(x)$, $\eta\in H^{s_1}([0,T]; H^{s_2}(\bb T^d;\bb R^d))$ (recall that for $\Sigma\in\Lambda_{d-1}$, the orthogonal projection onto $\Sigma$ is still denoted by $\Sigma$). From the very definition of $J_\eps^u$, it vanishes on $\bs L$.
\end{rem}

In the sequel, we shall regard $\eta\in H^{s_1}([0,T]; H^{s_2}(\bb T^d;\bb R^d))$ also as the element in $\bs H^{\bs s}$ defined by $f^\eta_t(x,\Sigma) = \eta_t(x)$, and we shorthand $J_\eps^u(f^\eta)$ by $J_\eps^u(\eta)$.

\section{Super-exponential estimates}
\label{sec:3}

In this section we prove the probability estimates needed for the large deviations upper bound. These will be achieved by suitable applications of It\^o's formula with respect to various semimartingales whose quadratic variations will be explicitly computed (for an introduction to these notions we refer the unfamiliar reader to, e.g., \cite[Chap.~4]{DZ}). Strictly speaking, It\^o's formula will be applied to some functions that are not $C^2$. Nevertheless, the resulting formulae can be justified by means of an appropriate truncation procedure, that is here completely omitted and not further mentioned. We refer the interested reader to \cite{BBP1} for the details on this truncation argument.

The following elementary observation will be used repeatedly in the sequel. If $B_1,\ldots, B_n$ are measurable subsets of $C([0,T];L^2)$ then
\begin{equation}
\label{bbb}
\eps\lambda_\eps\log \bb P_\eps\Big(\bigcup_{i=1}^n B_i\Big) \le \eps\lambda_\eps\log n + \bigvee_{i=1}^n \eps\lambda_\eps\log\bb P_\eps(B_i)\;.
\end{equation}

Hereafter, we shall denote by $C$ a generic positive constant, independent of $\eps$, whose numerical value may change from line to line and from one side to the other in an inequality.

\subsection{Energy estimate}
\label{sec:3.1}

In the context of the analysis of the action functional \cite{MR}, from the equi-bounded\-ness of the action it is deduced a uniform bound for the free-energy functional $\mc F_\eps$ given by \eqref{F} and the time integral of the diffuse Willmore functional $\mc W_\eps$ defined by \eqref{W}. In the stochastic setting, both the free energy and the time integral of the diffuse Willmore functional can be  arbitrarily large, however - as we here show - this happens with probability super-exponentially small. 
\begin{prop}
  \label{prop:1}
  Let $\bb P_\eps$ be the law of the solution to \eqref{1} with
  initial datum $\bar u^\eps_0$. 
  Then there exists a constant $\ell_0\in [1,+\infty)$ and $\eps_0>0$ such that for any $\eps \in (0,\eps_0]$ and
  $\ell\in [\ell_0,+\infty)$, 
  \begin{equation}
    \label{3}
     \eps\lambda_\eps\,
    \log\bb P_\eps\Big(\sup_{t\in [0,T]} \mc F_\eps(u_t) + \int_0^T\!
      \mc W_\eps(u_t)\,\rmd t > \ell\Big) \le - \frac{\ell}{20}\;. 
  \end{equation}
\end{prop}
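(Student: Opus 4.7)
I would apply It\^o's formula to $t\mapsto\mc F_\eps(u_t)$ along the SPDE \eqref{1}. Since the drift of \eqref{1} equals $-\eps^{-1}\mc F_\eps'(u)$ and $\|\mc F_\eps'(u)\|_{L^2}^2=\eps\,\mc W_\eps(u)$, the inner product $\langle\mc F_\eps'(u_t),\rmd u_t\rangle$ produces the dissipative term $-\mc W_\eps(u_t)\,\rmd t$ plus a stochastic differential. Setting $X_t:=\mc F_\eps(u_t)+\int_0^t\mc W_\eps(u_s)\,\rmd s$, the resulting identity reads
\begin{equation*}
X_t = \mc F_\eps(\bar u_0^\eps) + M_t + \int_0^t h_s\,\rmd s,
\end{equation*}
where $M_t:=\sqrt{2\lambda_\eps}\int_0^t\langle\mc F_\eps'(u_s),\rmd\alpha^\eps_s\rangle$ is a continuous martingale and $h_s:=\lambda_\eps\,\mathrm{Tr}(\mc F_\eps''(u_s)Q_\eps)$ is the It\^o correction, $Q_\eps$ being the covariance operator of $\alpha^\eps$. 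This reduces the proof to an exponential-type bound for $\sup_{t\le T}X_t$.

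\textbf{A priori bounds on $[M]$ and $h$.} Young's inequality for convolutions yields $[M]_t=2\lambda_\eps\int_0^t\|j_\eps*\mc F_\eps'(u_s)\|_{L^2}^2\,\rmd s\le C\eps\lambda_\eps\int_0^t\mc W_\eps(u_s)\,\rmd s\le C\eps\lambda_\eps X_t$. Expanding the trace,
\begin{equation*}
h_s=\lambda_\eps\eps\|\nabla j_\eps\|_{L^2}^2 + \frac{\lambda_\eps\|j_\eps\|_{L^2}^2}{\eps}\int W''(u_s)\,\rmd x.
\end{equation*}
The growth bound $|W''|\le C(\sqrt{W}+1)$ from Assumption \ref{t:ws}(4), combined with $\int W(u)\,\rmd x\le\eps\,\mc F_\eps(u)$ and Cauchy--Schwarz, gives $\int|W''(u_s)|\,\rmd x\le C(1+\sqrt{\eps\mc F_\eps(u_s)})$. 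Thanks to condition \eqref{2.5b}, the resulting bound $h_s\le A_\eps + B_\eps\sqrt{\eps\mc F_\eps(u_s)}$ has $A_\eps,B_\eps\to 0$ as $\eps\to 0$.

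\textbf{Stopping time and exponential bound.} Let $\tau:=\inf\{t\in[0,T]:X_t\ge\ell\}\wedge T$. On $\{\sup_{t\le T}X_t\ge\ell\}$, continuity of $X$ gives $\tau<T$, $X_\tau=\ell$, and $\mc F_\eps(u_s)\le X_s\le\ell$ for every $s\le\tau$. Hence $M_\tau\ge\ell-\mc F_\eps(\bar u^\eps_0)-T(A_\eps+B_\eps\sqrt{\eps\ell})$ and $[M]_\tau\le C\eps\lambda_\eps\ell$. For $\theta=(C'\eps\lambda_\eps)^{-1}$ with $C'$ a suitable constant, we have $\tfrac{\theta^2}{2}[M]_\tau\le\theta\ell/4$. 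Using Assumption \ref{t:au0}(a) to bound $\mc F_\eps(\bar u^\eps_0)$ by a constant, the smallness of $A_\eps,B_\eps\sqrt\eps$ for $\eps\le\eps_0$, and choosing $\ell\ge\ell_0$ with $\ell_0\in[1,\infty)$ large enough (but independent of $\eps$) to absorb the constant and $\sqrt\ell$ contributions into $\theta\ell/4$, one gets $\theta M_\tau-\tfrac{\theta^2}{2}[M]_\tau\ge\ell/(20\eps\lambda_\eps)$ on $\{\sup_{t\le T}X_t\ge\ell\}$. Applying the maximal inequality to the non-negative supermartingale $Z_t:=\exp(\theta M_t-\tfrac{\theta^2}{2}[M]_t)$ then yields \eqref{3}.

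\textbf{Main obstacle.} The delicate point is to prevent the It\^o correction $h_s$---whose prefactors $\|\nabla j_\eps\|_{L^2}^2$ and $\|j_\eps\|_{L^2}^2/\eps$ may diverge as $\eps\to 0$---from destroying the exponential estimate. This forces a joint use of the subquadratic growth of $W''$ in Assumption \ref{t:ws}(4), which makes the $u$-dependence of $h_s$ only sublinear in $\mc F_\eps(u)$, and of condition \eqref{2.5b}, which kills the purely noise-dependent constants. A secondary, routine technicality is the justification of It\^o's formula for the non-$C^2$ functional $\mc F_\eps$, handled by the truncation procedure announced at the start of Section \ref{sec:3}.
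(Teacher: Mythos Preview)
Your argument is correct and shares the same skeleton as the paper's: apply It\^o's formula to $\mc F_\eps(u_t)$, control the It\^o correction using Assumption~\ref{t:ws}(4) together with condition~\eqref{2.5b}, and finish with an exponential martingale bound. The difference is in how the self-referential structure is closed. The paper uses $|W''|\le C(1+W)$ to get $|R_t|\le C+\tfrac12\sup_{s\le t}\mc F_\eps(u_s)$, absorbs this half into the left side to obtain $\tfrac12\sup_s\mc F_\eps(u_s)+\int_0^t\mc W_\eps\le\bar C+\sup_s N_s$, bounds $[N]_t\le 2\eps\lambda_\eps(\bar C+\sup_s N_s)$, and then invokes the generalized Bernstein inequality of Lemma~\ref{mm} (the case $[N]_\tau\le\beta\sup N+C$). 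You instead introduce the hitting time $\tau$ of level $\ell$, so that on $[0,\tau]$ both $\mc F_\eps(u_s)$ and $\int_0^\tau\mc W_\eps$ are trivially bounded by $\ell$, and then the bare exponential supermartingale plus Markov's inequality suffices. Your route is more self-contained in that it bypasses Lemma~\ref{mm}; the paper's route is cleaner once that lemma is in hand and makes the feedback between $[N]$ and $\sup N$ explicit. One cosmetic point: write $\{\sup_t X_t>\ell\}$ rather than $\{\sup_t X_t\ge\ell\}$ to guarantee $\tau<T$ and $X_\tau=\ell$, consistently with the statement.
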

We start by a general martingale inequality that generalizes the Bernstein inequality, see, e.g., \cite[Ex.~VI.3.16]{RY}, which is obtained by choosing $\beta=0$ in Lemma~\ref{mm} below. The next statement is a particular case of \cite[Lemma 2]{Mariani} to which we refer for the proof.

\begin{lem}
  \label{mm}
Let $M$ be a real, continuous, square integrable martingale starting from $0$ with quadratic variation $[M]$. Given $\beta\ge 0$ and $C\in (0,+\infty)$, for any bounded stopping time $\tau$,
  \begin{equation*}
  \bb P \Big( \sup_{t\le \tau} M_t > \ell \:,\: [M]_\tau \le \beta
  \, \sup_{t\le \tau} M_t + C \Big) \le \exp\Big\{ - \frac
  {\ell^2}{2(\beta \ell +C)}\Big\}\;, \qquad \ell >0\;.  
  \end{equation*}
\end{lem}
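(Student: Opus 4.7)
The plan is to establish the estimate by the classical exponential-martingale method underlying Bernstein's inequality (the $\beta = 0$ case, cited as \cite[Ex.~VI.3.16]{RY}), and then to handle the $\beta\sup_{t\le\tau}M_t$ term by stratifying on the running maximum. For each $\theta > 0$, the process $N_t^\theta := \exp\bigl(\theta M_t - \tfrac{\theta^2}{2}[M]_t\bigr)$ is a nonnegative continuous local martingale and hence a supermartingale with $\bb E[N_0^\theta] = 1$, so $\bb E[N_\sigma^\theta]\le 1$ for every bounded stopping time $\sigma$. Setting $\tau_\ell := \inf\{t\ge 0 : M_t\ge \ell\}$, by continuity of $M$ the event $\{\sup_{t\le\tau}M_t > \ell\}$ coincides with $\{\tau_\ell\le \tau\}$ and on it $M_{\tau_\ell} = \ell$.

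First I would verify the case $\beta = 0$ to fix the template. On $E := \{\sup_{t\le\tau}M_t > \ell,\: [M]_\tau\le C\}$ the quadratic variation satisfies $[M]_{\tau_\ell\wedge\tau}\le C$ deterministically, so $\mathbf{1}_E\, N_{\tau_\ell\wedge\tau}^\theta \ge \mathbf{1}_E\exp\bigl(\theta\ell - \tfrac{\theta^2}{2}C\bigr)$; combining with $\bb E[N_{\tau_\ell\wedge\tau}^\theta]\le 1$ and Markov's inequality yields $\bb P(E)\le \exp\bigl(-\theta\ell + \tfrac{\theta^2}{2}C\bigr)$, and the choice $\theta^\star = \ell/C$ gives the stated bound.

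For $\beta > 0$ the running maximum appearing in the condition is itself random, so the direct optimization above fails. My idea is to stratify $E := \{\sup_{t\le\tau}M_t > \ell,\: [M]_\tau \le \beta\sup_{t\le\tau}M_t + C\}$ geometrically by the value of the running maximum: fix a ratio $r > 1$ and set $E_k := E\cap\{\sup_{t\le\tau}M_t\in [r^k\ell, r^{k+1}\ell)\}$ for $k\ge 0$. On each $E_k$ one has the deterministic bound $[M]_\tau\le \beta r^{k+1}\ell + C$, so the $\beta = 0$ argument applied at the threshold $r^k\ell$ with this variance bound delivers
\[
\bb P(E_k) \le \exp\!\Bigl(-\frac{(r^k\ell)^2}{2(\beta r^{k+1}\ell + C)}\Bigr).
\]
Summing the resulting series over $k\ge 0$, which is dominated by its $k = 0$ term because the exponents grow like $r^k$, produces an upper bound of the form $A(r)\exp\bigl(-\ell^2/(2(\beta r\ell + C))\bigr)$, and letting $r\downarrow 1$ approaches the stated exponent $\ell^2/(2(\beta\ell + C))$.

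The main obstacle is the loss of the sharp constant: the geometric stratification costs a factor $r$ in the exponent that a naive summation cannot eliminate. The device to recover the exact constant, which I would then import from \cite[Lemma 2]{Mariani}, replaces the geometric thresholds by a continuous-threshold argument exploiting the monotonicity of $s\mapsto s^2/(2(\beta s + C))$ coupled with the continuity of $M$; this is precisely the step the authors bypass by invoking Mariani's lemma directly.
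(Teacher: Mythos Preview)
The paper does not prove this lemma at all; it simply records that the statement is a particular case of \cite[Lemma~2]{Mariani} and refers the reader there. Your proposal therefore already goes beyond what the paper provides: the $\beta=0$ argument via the exponential supermartingale $N^\theta$ and the hitting time $\tau_\ell$ is correct and is exactly the classical Bernstein bound, and your geometric stratification for $\beta>0$ yields a legitimate inequality
\[
\bb P(E)\le \sum_{k\ge 0}\exp\Bigl(-\frac{(r^k\ell)^2}{2(\beta r^{k+1}\ell+C)}\Bigr),
\]
valid for every $r>1$. Your diagnosis that this cannot be sharpened to the stated constant by letting $r\downarrow 1$ is also correct: for each fixed $k$ the exponent tends to $\ell^2/(2(\beta\ell+C))$, so roughly $1/(r-1)$ terms contribute comparably and the prefactor $A(r)$ diverges.

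So the situation is this: your self-contained argument proves a slightly weaker bound (with $\beta\ell$ replaced by $\beta r\ell$ for an arbitrary $r>1$), and for the sharp constant you, like the paper, defer to Mariani. That is not a flaw relative to the paper's own treatment. It is worth noting, though, that the weaker bound you do establish would already suffice for every application of Lemma~\ref{mm} in the paper (Propositions~\ref{prop:1}, \ref{t:ene}, Lemma~\ref{lem:J3}), since those only use the resulting exponential decay in $\ell$ and absorb constants freely; only the statement of the lemma, not its uses, requires the exact exponent.
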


\noindent\textit{Proof of Proposition \ref{prop:1}}\hspace{2truept} 
By It\^o's formula, with $\bb P_\eps$-probability one, for each $t\in [0,T]$,
  \begin{equation}
  \label{ito2}
  \mc F_\eps(u_t) + \int_0^t\!\mc W_\eps(u_s)\,\rmd s 
  = \mc F_\eps(\bar u_0^\eps) + R_t + N_t\;, 
  \end{equation} 
  where $N$ is a continuous $\bb P_\eps$-martingale and the It\^o's term is
  \[
    R_t = \eps\lambda_\eps \int_0^t\!\int\!\int\!
    \big[\nabla j_\eps(x-y)\big]^2
    \,\rmd x\,\rmd y\, \rmd s + \lambda_\eps
    \int_0^t\!\int\! \frac 1\eps (j_\eps*j_\eps)(0) W''(u)
    \,\rmd x\,\rmd s\;.
  \]
  Therefore, for $\eps_0$ small enough (depending on $T$) and $0<\eps \le \eps_0$,
  \begin{equation}
  \label{rtnew}
  |R_t|  \le \eps\lambda_\eps \|\nabla
  j_\eps\|_{L^2}^2\,t  + \eps^{-1}\lambda_\eps \|j_\eps\|_{L^2}^2 \int_0^t\!\int\! 
  |W''(u)|\,\rmd x\, \rmd s 
   \le C+ \frac12 \sup_{s \le t} \mc F_\eps(u_s) \;,
  \end{equation}
  where we used that $|W''|\leq C(1+\eps^{-1}W)$ by Assumption \ref{t:ws} and that, in view of \eqref{2.5b}, $\eps^{-1}\lambda_\eps \|j_\eps\|_{L^2}^2 \to 0$ as $\eps \to 0$.
  
  By taking the supremum over time in \eqref{ito2} and using the previous bound we get
   \begin{equation}
  \label{ito21}
  \frac12 \sup_{s\le t} \mc F_\eps(u_s) + \int_0^t\!\mc W_\eps(u_s)\,\rmd s 
  \le \mc F_\eps(\bar u_0^\eps) + \sup_{s \le t} N_s \leq \bar C +\sup_{s \le t} N_s \; ,
  \end{equation}  
where $\bar{C}:=\sup_{\eps \leq \eps_0} \mc F_\eps(\bar u_0^\eps)<+\infty$. The quadratic variation of $N$ is
  \begin{equation}
  \label{4}
  \begin{split} 
    [N]_t & = 2\lambda_\eps 
    \int_0^t\! \Big\|j_\eps * 
    \Big(\eps\Delta u_s - \frac 1\eps W'(u_s)\Big)\Big\|_{L^2}^2\,
    \rmd s \\ 
    & \le 2 \eps\lambda_\eps 
    \int_0^t\!\mc W_\eps (u_s)\, \rmd s 
    \le 2 \eps\lambda_\eps 
    \big[ \bar C  +  \sup_{s \le t}N_s \big]\;,
  \end{split}
  \end{equation}
  where we used \eqref{ito21} in the last inequality. By applying Lemma~\ref{mm} we deduce
  \[
   \eps\lambda_\eps\,
  \log\bb P_\eps\Big(\sup_{t\in [0,T]} N_t > \ell\Big) 
  \le - \frac{\ell}{4+4 \bar{C} \ell^{-1}} \le -\frac{\ell}{5}\;,
  \]
  provided $\ell \geq \ell_0:=4 \bar{C}+1$.
  Using again \eqref{ito21} the conclusion follows.
  \qed

\subsection{Continuity moduli}
\label{sec:3.2}

In this subsection we prove the estimates on the continuity moduli needed for the exponential tightness and to ensure that the rate function is finite only on the set $\bs\Gamma$ of admissible pairs, recall items c) and d) in Definition~\ref{def:2}.

\begin{prop}
  \label{t:omep}
  Let $\bb P_\eps$ be the law of the solution to \eqref{1} with
  initial datum $\bar u^\eps_0$.
  For each $\gamma \in (0,\frac{1}{4d})$ there exist constants $\eps_0>0$, $0<\delta_0 \le 1 \wedge T$, and $C_0\ge 1$ such that the following holds. For any $\eps \in (0,\eps_0]$, for any $\delta\in (0, \delta_0)$, and for any $\zeta >0$ such that $\zeta \delta^{-\gamma} \geq C_0\ell_0$ with $\ell_0$ as in Proposition \ref{prop:1},
 we have
  \begin{equation*}
       \eps\lambda_\eps\, \log\bb
      P_\eps\Big(\omega^\infty(u;\delta) >\zeta\Big) 
      \le - \,\frac { \zeta }{ C_0\delta^{\gamma}}\;,
  \end{equation*}
where $\omega^\infty(u;\delta) $ is the continuity modulus defined in \eqref{ominf}.
\end{prop}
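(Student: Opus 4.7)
The plan is to follow the blueprint used for Proposition~\ref{prop:1}: apply It\^o's formula to a suitable semimartingale constructed from $u$, identify drift and martingale parts, and control them via the a priori energy bounds and Lemma~\ref{mm}. The new ingredients required here are a time discretization and an interpolation connecting the $L^1$ displacement to a Hilbert-valued quantity amenable to It\^o.

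First I would discretize time: partitioning $[0,T]$ into $\lfloor T/\delta\rfloor$ intervals of length $\delta$ and using the triangle inequality, one has
\[
\omega^\infty(u;\delta) \le 3 \max_{0 \le k \le \lfloor T/\delta\rfloor} \sup_{t\in [k\delta,(k+1)\delta]} \|u_t - u_{k\delta}\|_{L^1}\,.
\]
The elementary bound \eqref{bbb} implies that, up to a $\log(T/\delta)$ prefactor absorbed into $C_0$, it suffices to estimate $\bb P_\eps\big(\sup_{t \in[s,s+\delta]}\|u_t-u_s\|_{L^1}>\zeta\big)$ for a single $s=k\delta$. I then condition on the good event $A_\ell = \{\sup_t\mc F_\eps(u_t)+\int_0^T\mc W_\eps(u_t)\rmd t\le \ell\}$ with $\ell$ to be tuned; on $A_\ell^c$ Proposition~\ref{prop:1} yields $\bb P_\eps(A_\ell^c)\le\exp\{-\ell/(20\eps\lambda_\eps)\}$.

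On $A_\ell$ I apply It\^o's formula (for $t\ge s$, with $u_s$ frozen) to $F(t):=\|u_t-u_s\|_{L^2}^2$. The drift
\[
2\int_s^t\!\langle u_r-u_s,\,\Delta u_r-\eps^{-2}W'(u_r)\rangle_{L^2}\,\rmd r
\]
is bounded, by Cauchy--Schwarz in space followed by Cauchy--Schwarz in time, by $2\sqrt{\delta\ell/\eps}\,\sup_{r\le t}\|u_r-u_s\|_{L^2}$, using the Willmore estimate $\int_0^T\|b(u_r)\|_{L^2}^2\,\rmd r\le\ell/\eps$ available on $A_\ell$. The It\^o correction $2\lambda_\eps\|j_\eps\|_{L^2}^2(t-s)$ is $o(\eps\delta)$ thanks to \eqref{2.5b}. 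The quadratic variation of the martingale part is bounded linearly in $\sup_{r\le t}F(r)$ by $8\lambda_\eps\delta\,\sup_{r\le t}\|u_r-u_s\|_{L^2}^2$, so that after solving the resulting self-consistent quadratic inequality, Lemma~\ref{mm} provides a Bernstein-type tail estimate on $\sup_{r\le t}\|u_r-u_s\|_{L^2}$. Passing to the $L^1$ norm uses the trivial inclusion $\|v\|_{L^1}\le C\|v\|_{L^2}$ on the torus, refined if needed by the interpolation $\|v\|_{L^2}\le \|v\|_{H^{-\sigma}}^{1/(1+\sigma)}\|v\|_{H^1}^{\sigma/(1+\sigma)}$ for some $\sigma>d/2$, combined with the a priori bound $\|u_t\|_{H^1}\le C\sqrt{\ell/\eps}$ on $A_\ell$. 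Choosing $\ell$ of order $\zeta\delta^{-\gamma}/\ell_0$ makes $\bb P_\eps(A_\ell^c)$ fit within the target rate while keeping the deterministic drift subdominant to $\zeta$; the exponent $\gamma<1/(4d)$ emerges from the interplay of the scale $\sqrt{\delta\ell/\eps}$ in the drift with the scale $\sqrt{\ell/\eps}$ of $\|u_t\|_{H^1}$ entering the interpolation.

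The main obstacle is precisely the crude bound $\int_s^t\|b(u_r)\|_{L^2}\,\rmd r\le\sqrt{\delta\ell/\eps}$: this uses only the Willmore estimate through a time Cauchy--Schwarz and does not exploit the diffuse-interface concentration of $b$ that would heuristically give $\|u_t-u_s\|_{L^1}\sim\delta$ rather than $\sqrt{\delta/\eps}$. Making the probability penalty $\exp\{-\ell/(20\eps\lambda_\eps)\}$ from $A_\ell^c$ and the martingale tail (of order $\exp\{-c\zeta^2/(\delta\lambda_\eps)\}$) match the required rate $\exp\{-\zeta/(C_0\delta^\gamma\eps\lambda_\eps)\}$ simultaneously across the full range $\delta\in(0,\delta_0)$ and $\zeta\ge C_0\ell_0\delta^\gamma$ is the delicate point; the restriction $\gamma<1/(4d)$ reflects this balancing rather than an essential obstruction, and a sharper deterministic input (for instance via the equipartition of energy of Lemma~\ref{prop:2}) would presumably allow one to push $\gamma$ further.
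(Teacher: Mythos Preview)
Your approach has a genuine gap that the paper's proof is specifically engineered to avoid. When you apply It\^o to $\|u_t-u_s\|_{L^2}^2$, the drift bound $\int_s^t\|\Delta u_r-\eps^{-2}W'(u_r)\|_{L^2}\,\rmd r\le\sqrt{\delta\ell/\eps}$ carries an $\eps^{-1/2}$ factor that cannot be absorbed: the statement requires a bound uniform in $\eps\in(0,\eps_0]$ for a fixed $\delta_0$, so the drift term $\sqrt{\delta\ell/\eps}$ blows up as $\eps\to 0$ for any fixed $\delta$. You acknowledge this difficulty at the end but do not resolve it; the interpolation trick you propose only makes things worse, since the $H^1$ bound $\|u_t\|_{H^1}\le C\sqrt{\ell/\eps}$ introduces additional negative powers of $\eps$.

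The paper circumvents this by passing to $G(u)=\int_0^u\sqrt{2W(v)}\,\rmd v$. Applying It\^o to $\int G(u_t)\phi\,\rmd x$ (Lemma~\ref{t:ome}) gives a drift $\int_s^t\int\sqrt{2W(u)}\big(\Delta u-\eps^{-2}W'(u)\big)\phi\,\rmd x\,\rmd r$, which by Cauchy--Schwarz is bounded by $\sqrt{\delta}\,(\sup_r\int\eps^{-1}2W(u_r)\,\rmd x)^{1/2}(\int_0^T\mc W_\eps\,\rmd r)^{1/2}$; the crucial point is that $\int\eps^{-1}W(u)\,\rmd x\le\mc F_\eps(u)$, so the $\eps$-factors cancel exactly and the drift is $O(\sqrt{\delta}\,\ell)$. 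This yields an exponent $<\tfrac12$ for $z^\phi_t=\int G(u_t)\phi$. The second missing ingredient is how to pass from a single test function $\phi$ to the full $L^1$ norm of $G(u_t)-G(u_s)$: the paper uses a BV covering argument (Lemma~\ref{cover}) exploiting that $|\nabla G(u)|\le\rmd\mu_\eps^u$, hence $G(u_t)$ stays in a BV ball on the good energy event. The covering number $\log N_\sigma\sim\sigma^{-d}\log\sigma^{-1}$ is what forces $\gamma<\tfrac{1}{2d}$ for $G(u)$; the final halving to $\tfrac{1}{4d}$ comes from the square-root in $|G^{-1}(a)-G^{-1}(b)|\le C\sqrt{|a-b|}$ when returning from $G(u)$ to $u$.
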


The proof of the previous bound relies on the following lemma.

\begin{lem}
  \label{t:ome}
  Let $\bb P_\eps$ be the law of the solution to \eqref{1} with
  initial datum $\bar u^\eps_0$.  
  Let also $G\colon \bb R\to\bb R$ be defined by $G(u) = \int_0^u\!
  \sqrt{2W(v)} \, \rmd v$ and, given $\phi\in L^\infty$, set
  $z^\phi_t:= \int\! G(u_t) \phi \, \rmd x$, $t\in[0,T]$.
  For each $\gamma \in (0,\frac12)$ there exist constants $\eps_0>0$, $0<\delta_0 \le 1 \wedge T$, and $C_0\in (0,+\infty)$ such that the following holds. For any $\eps \in (0,\eps_0]$, any $\delta\in (0, \delta_0)$, any $\zeta >0$ such that $\zeta \delta^{-\gamma} \geq \ell_0$, and any $\phi\in L^\infty$, $\| \phi\|_\infty=1$,  
  \begin{equation*}
      \eps\lambda_\eps\, \log\bb
      P_\eps\Big(
      \sup_{|t-s| \le\delta} |z^\phi_t-z^\phi_s| >\zeta\Big) 
      \le - \,\frac { \zeta }{ C_0 \delta^\gamma}\;.
  \end{equation*}
\end{lem}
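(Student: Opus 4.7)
The plan is to apply It\^o's formula to $z^\phi_t = \int G(u_t)\phi\,\rmd x$ and to estimate the drift and martingale parts separately, using Lemma~\ref{mm} (Bernstein) and the energy estimate of Proposition~\ref{prop:1}. The key identity $G'(u)^2 = 2W(u)$ ties the drift of $z^\phi$ to the free-energy density. Explicitly, It\^o gives $z^\phi_t - z^\phi_s = \int_s^t A_r\,\rmd r + M_t - M_s$, with drift
\begin{equation*}
A_r = \int G'(u_r)\phi\,(\Delta u_r - \eps^{-2}W'(u_r))\,\rmd x + \lambda_\eps (j_\eps * j_\eps)(0) \int G''(u_r)\phi\,\rmd x
\end{equation*}
and a continuous martingale $M$ of quadratic variation $[M]_t - [M]_s = 2\lambda_\eps\int_s^t \|j_\eps*(G'(u_r)\phi)\|_{L^2}^2\,\rmd r$. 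I would introduce the energy event $\mc E_\ell := \{\sup_t \mc F_\eps(u_t) + \int_0^T \mc W_\eps(u_r)\,\rmd r \le \ell\}$ with $\ell := \zeta\delta^{-\gamma}$ (which is $\ge\ell_0$ by hypothesis), so that Proposition~\ref{prop:1} yields $\eps\lambda_\eps\log\bb P_\eps(\mc E_\ell^c) \le -\zeta/(20\delta^\gamma)$.

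On $\mc E_\ell$, Cauchy--Schwarz in space together with $\|G'(u)\|_{L^2}^2 = 2\int W(u)\,\rmd x \le 2\eps\mc F_\eps(u)$ and $\|\Delta u - \eps^{-2}W'(u)\|_{L^2}^2 = \eps^{-1}\mc W_\eps(u)$ bound the main drift term by $\|\phi\|_\infty\sqrt{2\mc F_\eps(u_r)\mc W_\eps(u_r)}$. Integrating on $[s,t]$ and applying Cauchy--Schwarz in time gives $\sqrt{2}\|\phi\|_\infty\,\ell\sqrt{\delta}$. The It\^o correction is dominated by $\lambda_\eps\|j_\eps\|_{L^2}^2\,\delta\int |G''(u_r)|\,\rmd x$; combining the growth bound $|G''(u)|\le C(1+u^2)$ (which follows from Assumption~\ref{t:ws}) with $\int u_r^2\,\rmd x \le C(1+\eps\ell)$ (from the uniform convexity of $W$ at infinity), the scaling condition \eqref{2.5b} makes this correction $o(1)$ as $\eps\to 0$. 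With $\ell=\zeta\delta^{-\gamma}$, the total drift is thus at most $\sqrt{2}\zeta\delta^{1/2-\gamma} + o(1)$, which is $\le\zeta/2$ provided $\delta\le\delta_0$ is small enough, since $\gamma<1/2$.

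For the martingale, Young's convolution inequality together with $\|G'(u_r)\|_{L^2}^2\le 2\eps\mc F_\eps(u_r)$ yields $[M]_t-[M]_s \le C\lambda_\eps\eps\ell(t-s)$ on $\mc E_\ell$. I would partition $[0,T]$ into $N:=\lceil 2T/\delta\rceil$ subintervals of length $\delta/2$, apply Lemma~\ref{mm} with $\beta=0$ on each subinterval to the process stopped at the first exit from $\mc E_\ell$, and take a union bound, using the elementary inequality $\sup_{|t-s|\le\delta}|M_t-M_s|\le 2\max_k \sup_{t\in I_k}|M_t-M_{t_k}|$. This produces
\begin{equation*}
\eps\lambda_\eps\log\bb P_\eps\bigl(\sup_{|t-s|\le\delta}|M_t-M_s| > \zeta/2,\,\mc E_\ell\bigr) \le \eps\lambda_\eps\log(2N) - \frac{\zeta}{16C\delta^{1-\gamma}}.
\end{equation*}
Since $\gamma<1/2$, one has $\delta^{-(1-\gamma)}\ge\delta^{-\gamma}$ on $(0,1)$, so the main term majorizes $-\zeta/(C_0\delta^\gamma)$ for an appropriate $C_0$; the logarithmic loss $\eps\lambda_\eps\log(2T/\delta)$ is absorbed using $\zeta\delta^{-\gamma}\ge\ell_0$ and the boundedness of $\delta^{1-2\gamma}\log(1/\delta)$ on $(0,1)$, once $\eps_0$ is small enough. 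Combining with the bound on $\mc E_\ell^c$ via \eqref{bbb} yields the claim.

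The main obstacle is the three-way balancing of the level $\ell$: it must be proportional to $\zeta\delta^{-\gamma}$ so that the energy-event probability and the Bernstein estimate deliver consistent speeds; the induced drift $\ell\sqrt{\delta}$ must remain subdominant (this is exactly where $\gamma<1/2$ is used); and the logarithmic partition loss must be absorbed using the lower bound $\zeta\delta^{-\gamma}\ge\ell_0$. A secondary issue is controlling the It\^o correction, since $G''(u)$ enjoys only polynomial-in-$u$ bounds, but this is rendered harmless by the noise-mollifier scaling \eqref{2.5b}.
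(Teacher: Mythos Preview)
Your proof is correct and follows essentially the same strategy as the paper: It\^o decomposition of $z^\phi$, a deterministic bound on the drift via Cauchy--Schwarz in space and time in terms of $\mc F_\eps$ and $\mc W_\eps$, a Bernstein-type bound on the martingale after restricting to an energy sublevel set, and a union bound over $\sim T/\delta$ time windows. The only cosmetic difference is your choice of cutoff level $\ell=\zeta\delta^{-\gamma}$, whereas the paper takes $\ell\sim\zeta\delta^{-1/2}$; both work and the absorption of the $\eps\lambda_\eps\log(T/\delta)$ loss proceeds identically (your remark that $\delta^{1-2\gamma}\log(1/\delta)$ is bounded is exactly the mechanism). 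The paper bounds the It\^o correction via $|G''|=|W'|/\sqrt{2W}\le C(1+W)$ rather than via $|G''|\le C(1+u^2)$ and an $L^2$ bound on $u$, but these are equivalent under Assumption~\ref{t:ws}.
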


\begin{proof}
  By a simple inclusion of events, see e.g., the proof of Thm.\ 8.3 in \cite{Bi}, it
  is enough to show that 
  \begin{equation}
    \label{pz}
     \sup_{s\in [0,T-\delta]}
    \eps\lambda_\eps\, \log\frac T\delta \bb P_\eps\Big(\sup_{t\in
      [s,s+\delta]} |z^\phi_t-z^\phi_s| >\zeta  \Big) 
      \le - \,\frac { \zeta }{ C  \delta^\gamma}\;,
  \end{equation}
  for some constant $C\in(0,+\infty)$ independent on $\phi$, $\zeta$, and
  $\delta$.

  By It\^o formula, with $\bb P_\eps$-probability one, for each
  $s\in [0, T-\delta]$ and $t\in [s,s+\delta]$,
  \begin{equation}
    \label{zp}
      z^\phi_t - z^\phi_s = 
      D^{\phi,s}_t
      + R^{\phi,s}_t + N^{\phi,s}_t\;,
  \end{equation}
  where 
  \[
 D^{\phi,s}_t := \int_s^t\! \int\! \sqrt{2W(u)}
      \Big(\Delta u - \frac 1{\eps^2} W'(u)\Big) \phi \,
      \rmd x\, \rmd r\;, 
  \]
  the It\^o term is
  \[
  R^{\phi,s}_t = \lambda_\eps \int_s^t\!\int\! 
  \frac{ W'(u) \phi}{\sqrt{2W(u)}} 
  (j_\eps*j_\eps)(0) \,\rmd x\,\rmd r\;,
  \]
  and $N^{\phi,s}_t$, $t\in [s,T]$, is a $\bb P_\eps$-martingale with quadratic variation,
  \[
    [N^{\phi,s}]_t = 
    4\lambda_\eps \int_s^t\! 
    \Big\|j_\eps * \big(\phi \sqrt{W(u_r)} \big)\Big\|_{L^2}^2\,\rmd r 
    \le 4 \eps\lambda_\eps\delta  
    \sup_{r\in [0,T]}\mc F_\eps(u_r)\;.
  \]
  To control the martingale part, given $\ell>0$, we bound,
  \[
  \begin{split}
& \bb P_\eps\Big(\sup_{t\in [s,s+\delta]} |N^{\phi,s}_t| > \zeta \Big) \le \bb P_\eps\Big(\sup_{r\in [0,T]}\mc F_\eps(u_r) > \ell\Big) \\ & \qquad\qquad + \bb P_\eps\Big(\sup_{t\in [s,s+\delta]} |N^{\phi,s}_t| > \zeta\;,\;\sup_{r\in [0,T]}\mc F_\eps(u_r)\le \ell\Big)\;.
\end{split}
   \]
Choosing $\delta_0$ small enough so that $\ell = 2 \zeta/\sqrt \delta > \ell_0$, applying Proposition \ref{prop:1}, and Lemma~\ref{mm} with $\beta=0$ (both to the martingales $N^{\phi,s}$ and $-N^{\phi,s}$) we deduce, recalling \eqref{bbb}, that there exists a constant $C>0$ such that, for all $\delta$ small enough,
  \begin{equation}
    \label{pz1}
     \sup_{s\in
      [0,T-\delta]} \eps\lambda_\eps\, \log  \bb
    P_\eps\Big(\sup_{t\in [s,s+\delta]} |N^{\phi,s}_t| >\zeta  \Big)
    \le \eps \lambda_\eps \log 3- \frac{\zeta}{C\sqrt\delta} \;.
  \end{equation}

Concerning the It\^o term, for any $\eps\in (0,1)$,
\[
\begin{split}
|R^{\phi,s}_t| & \le \lambda_\eps \|j_\eps\|_{L^2}^2\, \delta \sup_{r\in [0,T]} \int\! \frac{|W'(u_r)|}{\sqrt{2W(u_r)}}\,\rmd x \\ & \le \lambda_\eps \|j_\eps\|_{L^2}^2\, \delta  C \sup_{r\in [0,T]} \int\! (1+W(u_r))\,\rmd x\; \\
& \leq C \, \delta \Big(1+ \sup_{r \in [0,T]} \mc F_\eps (u_r) \Big) \; , 
\end{split}
\]
where we used $|(2W)^{-1/2}W'|\le C(1+W)$ and the assumption \eqref{2.5b} on $j_\eps$. 

By choosing $\delta_0>0$ so small that $(C \sqrt \delta)^{-1} \geq 2+2 \ell^{-1}_0$ we have $\frac{\zeta}{C\delta} -1 \geq \frac{2 \zeta}{\sqrt \delta}$, hence, by Proposition \ref{prop:1} with $\ell=2\zeta /\sqrt{\delta}$ we obtain
\begin{equation}
\label{pz2}
 \eps \lambda_\eps\, \log\bb P_\eps\Big(\sup_{t\in [s,s+\delta]} |R^{\phi,s}_t| > \zeta \Big)  \leq  -\frac {\zeta}{10 \sqrt{\delta}}\;.
\end{equation}
Finally, by Cauchy-Swartz inequality, 
\[
\begin{split}
|D^{\phi,s}_t| & \le \sqrt\delta  \Big(\sup_{r\in [0,T]} \int\! \eps^{-1} 2 W(u_r)\,\rmd x\Big)^{\frac 12} \Big(\int_0^T \mc W_\eps(u_r) \,\rmd r\Big)^{\frac 12} \\ & \le \sqrt{\frac \delta 2}\, \Big(\sup_{r\in [0,T]} \mc F_\eps(u_r) + \int_0^T\! \mc W_\eps(u)\,\rmd r\Big)\;,
\end{split}
\] 
where in the last step we used Young's inequality. Hence,
  \[
 \bb P_\eps\Big(\sup_{t\in [s,s+\delta]} |D^{\phi,s}_t| > \zeta \Big) \le \bb P_\eps\Big(\sup_{r\in [0,T]}\mc F_\eps(u_r)+ \int_0^T\! \mc W_\eps(u)\,\rmd r > \zeta \sqrt{\tfrac{2}{\delta}} \,\Big) \; . 
   \]
Choosing $\delta_0$ small enough so that $\ell =  \zeta \sqrt{2 /\delta} > \ell_0$, applying Proposition~\ref{prop:1} with $\ell=\zeta \sqrt{2/\delta} \geq \ell_0$, we deduce
  \begin{equation}
    \label{pz3}
     \sup_{s\in
      [0,T-\delta]} \eps\lambda_\eps\, \log  \bb
    P_\eps\Big(\sup_{t\in [s,s+\delta]} |D^{\phi,s}_t| >\zeta  \Big)
    \le - \frac{\zeta}{20 \sqrt\delta} \;.
  \end{equation}
Since $\eps \lambda_\eps \to 0$, $ \log \frac{3}{\delta}=o (\delta^{-1/2+\gamma})$, and $\zeta \delta^{-\gamma} \geq \ell_0$, choosing $\eps_0>0$ small enough, the bound \eqref{pz} follows from \eqref{zp}, \eqref{pz1}, \eqref{pz2}, and \eqref{pz3}. 
\qed\end{proof}

To deduce Proposition \ref{t:omep} from the previous lemma, we need a rough ``measure of compactness" for the embedding $BV\hookrightarrow L^1$. 

\begin{lem}
\label{cover}
Let $\mc K$ be the subset of $L^1$ given by $\mc K := \big\{ v\in BV(\bb T^d) \colon \|v\|_{BV} \le 1 \big\}$. There exists a constant $C>0$ for which the following holds. For each $\sigma \in (0,1]$ there exists a finite set $\{v_1, \ldots, v_{N_\sigma} \}$ such that $\mc K \subset \bigcup_{i} \{ v \in L^1 \colon  \|v-v_i\|_{L^1}<\sigma \}$ and $N_\sigma \leq C (\sigma^{-d})^{\sigma^{-d}}$.
\end{lem}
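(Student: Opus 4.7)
The plan is the classical covering-number argument for the unit ball of $BV(\bb T^d)$ in $L^1$ via piecewise constant approximation on a Cartesian grid.

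First, I would fix $h\in (0,1)$ with $1/h\in \bb N$ and partition $\bb T^d$ into $N:=h^{-d}$ axis-aligned cubes $Q_1,\ldots,Q_N$ of side $h$. For $v\in \mc K$ set $\bar v_i:= h^{-d}\int_{Q_i} v\,\rmd x$ and $\Pi v:=\sum_i \bar v_i\,\chi_{Q_i}$. The Poincar\'e--Wirtinger inequality for $BV$ on a cube gives $\int_{Q_i}|v-\bar v_i|\,\rmd x\le c_d\, h\,|Dv|(Q_i)$; summing over $i$ and using $\|v\|_{BV}\le 1$ yields $\|v-\Pi v\|_{L^1}\le c_d h$. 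Choosing $h=c\sigma$ with $c\le 1/(2c_d)$ therefore gives $\|v-\Pi v\|_{L^1}\le \sigma/2$ and $N\le C_1\sigma^{-d}$.

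Next, I would discretize the averages. Since $v\in \mc K$ has $\|v\|_{L^1}\le 1$, the averages satisfy the global $\ell^1$ bound $\sum_i|\bar v_i|\le h^{-d}$. Replacing each $\bar v_i$ by the nearest multiple $\tilde v_i$ of $\delta:=\sigma/2$, the function $\hat v:=\sum_i \tilde v_i\,\chi_{Q_i}$ satisfies $\|\Pi v-\hat v\|_{L^1}=h^d\sum_i|\bar v_i-\tilde v_i|\le Nh^d\delta=\delta$, so $\|v-\hat v\|_{L^1}\le \sigma$. The collection $\{v_1,\ldots,v_{N_\sigma}\}$ in the lemma will be chosen to consist of all such $\hat v$, and it remains to count them.

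The critical step is the counting, and it is here that one must exploit the global $\ell^1$ constraint rather than a naive product bound. The integer tuples $(\tilde v_i/\delta)\in \bb Z^N$ satisfy $\sum_i|\tilde v_i|/\delta\le (h^{-d}+N\delta)/\delta\le C_2\sigma^{-d-1}=:M$, so they lie in the $\ell^1$-ball of radius $M$ in $\bb Z^N$. The number of such lattice points is at most $2^N\binom{N+M}{N}\le \bigl(2e(1+M/N)\bigr)^N\le (C_3/\sigma)^N$. With $N\le C_1\sigma^{-d}$, taking logarithms gives $\log N_\sigma\lesssim \sigma^{-d}\log(1/\sigma)$, which is exactly a bound of the form $C(\sigma^{-d})^{\sigma^{-d}}$ after calibrating the ratio $h/\sigma$.

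The main obstacle is this final counting step: a straight product bound of the type $(\#\textrm{values per cube})^N$ overcounts and produces an exponent of the wrong order. One must use the constraint $\sum_i|\bar v_i|\le h^{-d}$ inherited from $\|v\|_{L^1}\le 1$ to restrict to lattice points in an $\ell^1$-ball and then invoke the binomial estimate above. Calibrating the Poincar\'e constant $c_d$ and the grid ratio $h/\sigma$ so that the resulting bound matches the stated form $C(\sigma^{-d})^{\sigma^{-d}}$ is the principal calculational effort.
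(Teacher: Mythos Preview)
Your argument is correct and gives the required bound $\log N_\sigma \lesssim \sigma^{-d}\log(1/\sigma)$, but it takes a different route from the paper at the counting step. The paper also begins with the piecewise-constant approximation on a grid of mesh $1/n\sim\sigma$, but then uses the Sobolev embedding $BV(\bb T^d)\hookrightarrow L^{d/(d-1)}$ together with H\"older to obtain a \emph{uniform} bound $|\bar v_i|\le C n^{d-1}$ on each cube average. After discretizing with step $1/n$, each cube carries at most $\sim n^d$ admissible values, and a plain product count gives $(Cn^d)^{n^d}\sim(\sigma^{-d})^{\sigma^{-d}}$. So, contrary to your remark, a straight product bound does \emph{not} overcount once one inputs the pointwise $L^\infty$ control on the averages coming from Sobolev; this is exactly what the paper does. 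Your approach, by contrast, uses only the global constraint $\sum_i|\bar v_i|\le h^{-d}$ inherited from $\|v\|_{L^1}\le 1$ and then invokes the binomial estimate for lattice points in an $\ell^1$-ball. The trade-off: the paper's route is shorter and needs no combinatorics beyond a product, at the price of the Sobolev--Poincar\'e step; your route avoids the embedding entirely and is therefore a bit more elementary, at the price of the $\ell^1$-ball count. Both yield the same logarithmic order, which is what is actually used downstream in the proof of Proposition~\ref{t:omep}.
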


\begin{proof}
Given $\sigma \in (0,1]$ and $m_0 \in \bb N$ to be fixed later, we let $n = m_0 \lfloor 1/\sigma \rfloor$ and write the fundamental domain $Q=[0,1)^d$ of the torus $\mathbb{T}^d$ as disjoint union of $n^d$ cubes $Q^n_{i}$ of linear size $1/n$ corresponding to multi-indices $i \in \{ 0, \ldots, n-1\}^d$.

Given $f \in \mc K$ let $f_i:= |Q^n_i|^{-1} \int_{Q^n_i} f\, \rmd x$ the average on each cube and $f^n:= \sum_i f_i \chi_{Q^n_i}$ the piecewise constant approximation of $f$.
By Holder, Sobolev, and Sobolev-Poincar\'e inequalities, with $1/1^*=1-1/d$, for any $i$ we have,
\[ 
|f_i| \leq |Q^n_i|^{-1/1^*}   \|f \|_{L^{1^*}(Q)} \leq  C n^{d-1}  \| f \|_{BV} \leq C n^{d-1} \;, \]
\[ \| f- f_i\|_{L^1(Q^n_i)} \le |Q^n_i|^{1-1/1^*}   \|f -f_i\|_{L^{1^*}(Q^n_i)} \leq  \frac Cn   \|Df \|_{TV(Q^n_i)} \; , 
\]
hence
\begin{equation}
\label{stap}
\| f- f^n\|_{L^1(Q)} \leq \frac Cn \;, \qquad \| f_n \|_{L^\infty(Q)} \leq C n^{d-1} \; .
\end{equation}
We let $n^{-1}\bb Z = \{m/n\;;\; m\in \bb Z\}$ and, for each $f^n$, we introduce its discrete approximation $\tilde f^n \colon Q \to n^{-1}\bb Z$ by setting $\tilde f^n = n^{-1} \sum_i \lfloor n f_i \rfloor \chi_{Q^n_i}$. Clearly,
\begin{equation}
\label{stap1}
\| f^n - \tilde f^n\|_{L^\infty(Q)} \leq \frac 1n \;, \qquad \| \tilde f_n \|_{L^\infty(Q)} \leq C n^{d-1} \; .
\end{equation}
By \eqref{stap} and \eqref{stap1} we have $\| f - \tilde f^n\|_{L^1(Q)} \le Cn^{-1} < \sigma$ for $m_0$ large enough, uniformly with respect to $f$. By construction, $\tilde f^n \in \mc Q_n$ with
\[
\mc Q_n := \Big\{g= \sum_i g_i \chi_{Q^n_i} \colon g_i\in n^{-1}\bb Z \cap [-Cn^{d-1},Cn^{d-1}]\Big\}\;.
\] 
As the cardinality $\mc Q_n$ is at most $C (n^d)^{n^d}$, the conclusion follows.  
\qed\end{proof}

\noindent\textit{Proof of Proposition~\ref{t:omep}}\hspace{2truept} 
Let $G$ be the function defined in Lemma~\ref{t:ome}. We claim that for each $\gamma \in (0,\frac1{2d})$ there exist constants $\eps_0>0$, $\delta_0 \in (0,T]$, and $C_1\ge 1$ such that the following holds. For any $\eps \in (0,\eps_0]$, for any $\delta\in (0, \delta_0)$, and for any $\zeta >0$ such that $\zeta \delta^{-\gamma} \geq C_1\ell_0$ with $\ell_0$ as in Proposition \ref{prop:1},
 we have
  \begin{equation}
  \label{Gm}
       \eps\lambda_\eps\, \log\bb
      P_\eps\Big(\omega^\infty(G(u);\delta) >\zeta\Big) 
      \le - \,\frac { \zeta }{ C_1\delta^{\gamma}}\;.
  \end{equation}

We first show that the claim implies the statement of the proposition. Since $W$ has quadratic minima and it has at least quadratic growth, then there exists $C>0$ such that $|G^{-1}(a)-G^{-1}(b)| \le C \sqrt{|a-b|}$ for any $a,b\in\bb R$. Hence, 
\[
\|u_t-u_s\|_{L^1} \le C \sqrt {\|G(u_t)-G(u_s)\|_{L^1}}\;,
\]
which implies the inclusion $\{\omega^\infty(u;\delta)>\zeta\} \subset \{\omega^\infty(G(u);\delta)>(\zeta/C)^2\}$. By applying \eqref{Gm}, with $\gamma \to 2\gamma$, $\zeta \to (\zeta/C)^2$, so that $\zeta^2\delta^{-2\gamma}\ge C^2 C_1\ell_0$ and $\gamma\in (0,\frac{1}{4d})$, the statement follows with $C_0 = \max\{1;C\sqrt{C_1/\ell_0}\}$.

We are left with the proof of \eqref{Gm}. To this end, we observe that $|\nabla G(u)|\le \frac\eps2 |\nabla u|^2 + \frac1\eps W(u)$ and, in view of Assumption \ref{t:ws}, $|G| \le C_2 (W+1)$ for some $C_2\ge 1$. Proposition~\ref{prop:1} thus implies that, for any $\ell\in [2C_2\ell_0,+\infty)$,
  \begin{equation}
    \label{3bb}
     \eps\lambda_\eps\,
    \log\bb P_\eps\Big(\sup_{t\in [0,T]} \big(\|G(u_t)\|_{L^1} + \|\nabla G(u_t)\|_{L^1} \big)   > \ell\Big) \le - \frac{\ell}{40C_2}\;.
  \end{equation}

Set $\mc K_\ell := \big\{ v\in BV(\bb T^d) \colon \|v\|_{BV} \le \ell\big\}$. For each $\rho>0$, by Lemma \ref{cover} with $\sigma=\rho\ell^{-1}$, there exists a finite set $\{v_1,\ldots,v_{N_{\rho\ell^{-1}}}\}\subset L^1$ such that $\mc K_\ell\subset \cup_i\{v\in L^1\colon \|v-v_i\|_{L^1} < \rho\}$ and $\log N_{\rho\ell^{-1}} \le C [1 + (\ell\rho^{-1})^d \log (\ell\rho^{-1})]$. Furthermore, for each $i,j=1,\ldots, N_{\rho\ell^{-1}}$ there is $\phi_{i,j}\in L^\infty$ of unit norm, given by $\phi_{i,j}=\mathrm{sgn} (v_i-v_j)$, such that $\|v_i-v_j\|_{L^1} = \int\! (v_i-v_j)\phi_{i,j}\,\rmd x$. 

Given $\gamma \in (0,\frac1{2d})$, choosing $\ell=\zeta\delta^{-\gamma} \geq 2C_2\ell_0$, \eqref{3bb} yields
 
 \begin{equation}
  \label{Gm1}
    \eps\lambda_\eps\, \log\bb
      P_\eps\Big( \big\{G(u_t)\in\mc K_{\zeta\delta^{-\gamma}} \;\forall\,t\in [0,T]\big\}^\mathrm{c} \Big) 
      \le - \frac { \zeta }{ C\delta^{\gamma}}\;. 
  \end{equation} 
  
Choosing $\rho=\zeta/5$, we have the inclusion of events 
\[
\begin{split}
\big\{\omega^\infty(G(u);\delta) >\zeta \big\} \cap \; \big\{G(u_t)\in\mc K_{\zeta\delta^{-\gamma}} \;\forall\,t\in [0,T]\big\} \\ \subset  \bigcup_{i,j} \Big\{
       \sup_{ |t-s|\le \delta} \int\! (G(u_t)-G(u_s))\phi_{i,j}\,\rmd x >\zeta/5\Big\} \; ,
\end{split}
\]
therefore by applying Lemma~\ref{t:ome} with exponent $\gamma'\in (0,\frac 12)$ and $\zeta\delta^{-\gamma'}\ge \ell_0$, and the above bound on $\log N_{\rho\ell^{-1}}$, we obtain, by choosing $\eps_0$ such that $\eps\lambda_\eps\le 1$ for $\eps\in [0,\eps_0]$, 
\begin{equation}
\label{Gm2}
\begin{split}
& \eps\lambda_\eps\,\log\bb P_\eps\Big(\omega^\infty(G(u);\delta) >\zeta \;, \; \big\{G(u_t)\in\mc K_{\zeta\delta^{-\gamma}} \;\forall\,t\in [0,T]\big\} \Big) \\ & \qquad \le \eps \lambda_\eps \log N_{\ell^{-1}\zeta/5}^2 - \frac { \zeta }{ C\delta^{\gamma'}} \le C (1 + \delta^{-d\gamma} \log \delta^{-\gamma}) - \frac{ \zeta }{ C\delta^{\gamma'}} \;. 
\end{split}
\end{equation} 
Since $\zeta\delta^{-\gamma} \ge 2C_2\ell_0$, by choosing $\gamma'\in (\gamma d,\frac12)$, we have $\zeta\delta^{-\gamma'}\ge \ell_0$ for any $\delta\in (0,\delta_0]$ and $\delta_0$ possibly smaller than the one in Lemma~\ref{t:ome}.

The bounds \eqref{Gm1} and \eqref{Gm2} then yield, for $\eps\in (0,\eps_0]$,
\[
\eps\lambda_\eps\, \log\bb P_\eps\Big(\omega^\infty(G(u);\delta) >\zeta\Big) \le \log 2 - \frac{\zeta}{C\delta^{\gamma}} \wedge \Big\{\frac {\zeta }{C\delta^{\gamma'}} -  C (1 + \delta^{-\gamma d} \log \delta^{-\gamma})\Big\}\;,
\]
which yields the claim for a possibly smaller choice of $\delta_0$.
\qed

\begin{prop}
\label{t:ene}
Let $\bb P_\eps$ be the law of the solution to \eqref{1} with initial datum $\bar u^\eps_0$. For $\phi\in C^1(\bb T^d)$ let $z^\phi_t := |V_{\eps,t}^u|(\phi)$. For each $\alpha\in (0,\frac 12)$ there exist constants $\eps_2 > 0$, $0<\delta_1 \le 1 \wedge T$, and $C_2\ge 1$ such that the following holds. For any $\phi\in C^1(\bb T^d)$ with $\|\phi\|_{C^1} \le 1$, any $\eps \in (0,\eps_2]$, any $\delta\in (0, \delta_1)$, and any $\zeta>0$ such that $\zeta \delta^{-\alpha} \geq C_2\ell_0$, with $\ell_0$ as in Proposition \ref{prop:1},
\begin{equation}
\label{z2}
\eps\lambda_\eps\, \log\bb P_\eps\big(\omega^1(z^\phi;\delta)>\zeta\big) \le -\frac{\zeta}{C_2\|\phi\|_{C^1}\delta^\alpha}\;,
\end{equation}
where $\omega^1(z;\delta)$ is the $L^1$-continuity modulus defined in \eqref{om1}.
\end{prop}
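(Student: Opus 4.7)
The plan is to apply It\^o's formula to the functional $u\mapsto \int\! \bigl[\tfrac{\eps}{2}|\nabla u|^2 + \tfrac1\eps W(u)\bigr]\phi\,\rmd x = \mu_\eps^u(\phi)$, in the spirit of the computation behind Lemma~\ref{t:ome}, and to control the resulting pieces on a good energy event. Setting $A_r := \eps\Delta u_r - \tfrac1\eps W'(u_r)$, integration by parts identifies the functional derivative as $-A\phi - \eps\nabla u\cdot\nabla\phi$, and It\^o's formula yields $z^\phi_t - z^\phi_s = D^{\phi}_{s,t} + R^{\phi,s}_t + N^{\phi,s}_t$, where
\begin{equation*}
D^{\phi}_{s,t} = -\int_s^t\!\int\!\Bigl(\tfrac1\eps A_r^2\,\phi + A_r\,\nabla u_r\cdot\nabla\phi\Bigr)\,\rmd x\,\rmd r\,,
\end{equation*}
$R^{\phi,s}_t$ is the It\^o correction (negligible by \eqref{2.5b}, along the lines of \eqref{pz2}), and $N^{\phi,s}_t$ is a continuous martingale with $[N^{\phi,s}]_t\le 2\lambda_\eps\int_s^t\|A_r\phi+\eps\nabla u_r\cdot\nabla\phi\|_{L^2}^2\,\rmd r$.

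On the event $E_\ell := \{\sup_t\mc F_\eps(u_t)+\int_0^T\!\mc W_\eps(u_t)\,\rmd t\le\ell\}$, whose complement is controlled by Proposition~\ref{prop:1}, I use $\|A_r\|_{L^2}^2 = \eps\mc W_\eps(u_r)$ and $\|\nabla u_r\|_{L^2}^2\le 2\mc F_\eps(u_r)/\eps$ to derive $[N^\phi]_T\le C\eps\lambda_\eps\|\phi\|_{C^1}^2\ell$ and, via Cauchy-Schwarz, $\int_0^T|b_r|\,\rmd r\le C\|\phi\|_{C^1}\ell$ for the drift density $b_r$. Splitting $\omega^1(z^\phi;\delta)$ into three pieces, the boundary term is bounded by $\int_0^\delta(|z^\phi_t|+|z^\phi_{T-t}|)\,\rmd t\le 2\delta\|\phi\|_\infty\ell$; the drift/correction contribution is $\int_{\delta'}^T|\int_{t-\delta'}^t b_r\,\rmd r|\,\rmd t\le\delta\int_0^T|b_r|\,\rmd r\le C\delta\|\phi\|_{C^1}\ell$ by Fubini; and there remains the martingale contribution $\int_{\delta'}^T|N^\phi_t-N^\phi_{t-\delta'}|\,\rmd t$, which is the main obstacle.

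For the martingale piece the crude estimate $\int_{\delta'}^T|N^\phi_t-N^\phi_{t-\delta'}|\,\rmd t\le 2T\sup_t|N^\phi_t|$ combined with Lemma~\ref{mm} only yields $\eps\lambda_\eps\log\bb P\le -c\zeta\delta/\|\phi\|_{C^1}$, which is weaker than the required $-\zeta/(C\|\phi\|_{C^1}\delta^\alpha)$. To recover the missing factor, I partition $[0,T]$ into $K=\lceil T/\delta\rceil$ intervals $I_k=[(k-1)\delta,(k+1)\delta]$ and set $a_k:=\sup_{s\in I_k}|N^\phi_s-N^\phi_{(k-1)\delta}|$. For any $\delta'\le\delta$ both $t$ and $t-\delta'$ lie in a common $I_k$, so $|N^\phi_t-N^\phi_{t-\delta'}|\le 2a_{k(t)}$ and Cauchy-Schwarz gives
\begin{equation*}
\int_{\delta'}^T|N^\phi_t-N^\phi_{t-\delta'}|\,\rmd t\le 2\delta\sum_{k=1}^K a_k\le 2\sqrt{T\delta}\,\Bigl(\sum_{k=1}^K a_k^2\Bigr)^{\!1/2}.
\end{equation*}
Conditionally on $\mc F_{(k-1)\delta}$, each $a_k$ is sub-Gaussian with parameter $\Lambda_k:=[N^\phi]_{(k+1)\delta}-[N^\phi]_{(k-1)\delta}$ by Doob's maximal inequality together with Lemma~\ref{mm}. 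Since $\sum_k\Lambda_k\le 2[N^\phi]_T\le C\eps\lambda_\eps\|\phi\|_{C^1}^2\ell$ on $E_\ell$, an iterated exponential-moment bound for squares of sub-Gaussians plus Chernoff yields $\bb P\bigl(\{\sum_k a_k^2>Y\}\cap E_\ell\bigr)\le\exp(-cY/(\eps\lambda_\eps\|\phi\|_{C^1}^2\ell))$ for $Y$ large enough with respect to the mean.

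To close the estimate I choose $\ell:=\zeta/(C\|\phi\|_{C^1}\delta^\alpha)$, the hypothesis $\zeta\delta^{-\alpha}\ge C_2\ell_0$ ensuring $\ell\ge\ell_0$ so that Proposition~\ref{prop:1} applies. This makes the drift/correction and boundary contributions to $\omega^1(z^\phi;\delta)$ at most $\zeta/2$ and gives $\eps\lambda_\eps\log\bb P(E_\ell^c)\le -\ell/20\le -\zeta/(C\|\phi\|_{C^1}\delta^\alpha)$. Substituting $Y=\zeta^2/(16T\delta)$ into the martingale tail and simplifying yields $\eps\lambda_\eps\log\bb P(\text{martingale}>\zeta/2)\le -c\,\zeta/(T\|\phi\|_{C^1}\delta^{1-\alpha})$, which dominates $-\zeta/(C_2\|\phi\|_{C^1}\delta^\alpha)$ precisely when $\delta^{1-2\alpha}$ is sufficiently small. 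This is exactly where the constraint $\alpha<1/2$ and the choice of $\delta_1$ enter. Combining the two probabilities via \eqref{bbb} concludes the proof: the $\sqrt{T\delta}$ factor produced by the partition-and-Cauchy-Schwarz step is the key device that converts a $\sup|N^\phi|$ bound into the stronger $L^1$-in-time estimate required by $\omega^1$.
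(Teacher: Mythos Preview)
Your decomposition via It\^o's formula, and your treatment of the drift and It\^o-correction pieces, are essentially the same as in the paper (there the drift is split as $D^{\phi,1}+D^{\phi,2}$ and handled by Young/Cauchy--Schwarz and Proposition~\ref{prop:1}, exactly as you do). The divergence is in the martingale term, and there your argument has a real gap.

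You claim that ``conditionally on $\mc F_{(k-1)\delta}$, each $a_k$ is sub-Gaussian with parameter $\Lambda_k$'' and then invoke an ``iterated exponential-moment bound for squares of sub-Gaussians'' to get $\bb P_\eps\bigl(\{\sum_k a_k^2>Y\}\cap E_\ell\bigr)\le\exp\bigl(-cY/S\bigr)$ with $S=C\eps\lambda_\eps\|\phi\|_{C^1}^2\ell$. The problem is that $\Lambda_k=[N^\phi]_{(k+1)\delta}-[N^\phi]_{(k-1)\delta}$ is \emph{not} $\mc F_{(k-1)\delta}$-measurable, so the tower property cannot exploit the random value of $\Lambda_k$. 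The only $\mc F_{(k-1)\delta}$-measurable bound available on $E_\ell$ (after stopping) is the crude $\Lambda_k\le S$, which yields $\bb E\bigl[e^{\theta a_k^2}\,\big|\,\mc F_{(k-1)\delta}\bigr]\le C$ for $\theta<c/S$; iterating gives $\bb E\bigl[e^{\theta\sum_k a_k^2}\bigr]\le C^K$ with $K\sim T/\delta$. The resulting prefactor $C^K$ produces an additive term of order $\eps\lambda_\eps/\delta$ on the log scale, which is \emph{not} dominated by $\zeta/(C\|\phi\|_{C^1}\delta^{1-\alpha})$ uniformly over $\delta\in(0,\delta_1)$ and $\eps\in(0,\eps_2]$ (it would force $\eps_2$ to depend on $\delta$). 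Trying instead to use the constraint $\sum_k\Lambda_k\le 2S$ runs into the adaptedness obstacle just described; there is no standard lemma delivering the bound you quote.

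The paper bypasses this entirely via Lemma~\ref{lem:boh}: it time-changes $N^\phi$ to a Brownian motion, applies Borell's inequality to control the $\alpha$-H\"older seminorm of $B$ on the random interval $[0,\tau^\eps_T]$ \emph{uniformly over all pairs of times}, and then computes $\omega^1(M^\eps;\delta)$ deterministically on the good event using only monotonicity of the time change and Jensen's inequality. This avoids any partition indexed by $\delta$ and hence any $K$-dependent prefactor. If you want to salvage your route, you would need to prove a genuine large-deviation bound for $\sum_k(\text{osc}_{J_k}B)^2$ with \emph{random} intervals $J_k\subset[0,S]$ satisfying $\sum_k|J_k|\le 2S$; this is essentially equivalent to the H\"older control that Lemma~\ref{lem:boh} provides directly.
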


We start by a general compactness property for families of martingales. To put the following result in perspective, consider a family of continuous real martingales $M^\eps$, whose quadratic variation admits the bound $\rmd [M^\eps]_t \le C \eps \rmd t$. Then, a straightforward application of Berstein's inequality yields a super-exponential estimate for its continuity modulus in $C([0,T])$ (this is indeed the argument used in Lemma~\ref{t:ome}). Next, we consider instead the case in which the quadratic variation admits the bound $[M^\eps]_T \le C \eps T$ and deduce a super-exponential estimate for the continuity modulus in $L^1([0,T])$.

\begin{lem}
\label{lem:boh}
Given $T>0$ let $M^\eps= \{M^\eps_t\}_{t\in [0,T]}$, $\eps\in (0,\eps']$, be a family of real, continuous, square integrable $\bb P_\eps$-martingales starting from $0$ with quadratic variation $[M^\eps]$. If there exist $C'>0$ and $\ell'>0$ such that, for any $\eps\in (0,\eps']$, and $\ell\ge \ell'$,
\begin{equation}
\label{pp7}
\eps\,\log\bb P_\eps \big(\eps^{-1} [M^\eps]_T > \ell \big) \le -\frac{\ell}{C'}\;, 
\end{equation}
then, for each $\alpha\in (0,\frac 12)$ there is $C''\ge 1$ such that the following holds. For any $\zeta>0$ and $\delta\in (0,1\wedge T]$ with $\zeta\delta^{-\alpha} \ge C'' \ell'$, and any $\eps\in (0,\eps']$,
\begin{equation}
\label{es-1}
\eps \log\bb P_\eps\big(\omega^1(M^\eps;\delta)>\zeta\big) \le  - \frac{\zeta}{C''\delta^\alpha}\;.
\end{equation}
\end{lem}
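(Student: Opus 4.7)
The plan is to restrict to two good events, $A_\ell := \{[M^\eps]_T\le\eps\ell\}$ and $B_\eta := \{\sup_{t\le T}|M^\eps_t|\le\eta\}$, with parameters $\ell = c_1\zeta\delta^{-\alpha}$ and $\eta = c_2\zeta\delta^{-\alpha}$ for suitable constants $c_1,c_2$. The hypothesis \eqref{pp7} gives directly $\eps\log\bb P_\eps(A_\ell^c) \le -c_1\zeta/(C'\delta^\alpha)$, while Lemma~\ref{mm} (with $\beta=0$) applied to $M^\eps$ on $A_\ell$ yields $\eps\log\bb P_\eps(B_\eta^c\cap A_\ell)\le -\eta^2/(2\ell) = -c_2^2\zeta/(2c_1\delta^\alpha)$; both rates match the target by choosing $c_1, c_2$ large enough. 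On $A_\ell\cap B_\eta$, the boundary part of $\omega^1(M^\eps;\delta)$, namely $2\delta\sup_t|M^\eps_t|\le 2\delta\eta = 2c_2\zeta\delta^{1-\alpha}$, is a small fraction of $\zeta$ for $\delta$ small (since $\alpha<1$), so it remains to control the shift $\sup_{\delta'\in(0,\delta]}\int_{\delta'}^T|M^\eps_t-M^\eps_{t-\delta'}|\,\rmd t$.

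For fixed $\delta'$, I would combine Cauchy-Schwarz with an It\^o expansion of $(M^\eps_t-M^\eps_{t-\delta'})^2$ and the stochastic Fubini theorem to derive the pathwise identity
\[
\int_{\delta'}^T(M^\eps_t-M^\eps_{t-\delta'})^2\,\rmd t = \int_{\delta'}^T\bigl([M^\eps]_t-[M^\eps]_{t-\delta'}\bigr)\,\rmd t + 2\,Y(\delta'),
\]
where $Y(\delta')=\int_0^T K_r^{\delta'}\,\rmd M^\eps_r$ with adapted kernel
\[
K_r^{\delta'}=\int_{r\vee\delta'}^{(r+\delta')\wedge T}(M^\eps_r-M^\eps_{t-\delta'})\,\rmd t, \qquad |K_r^{\delta'}|\le 2\delta'\sup_{s\le T}|M^\eps_s|.
\]
The first term on the right is dominated by $\delta'[M^\eps]_T\le \delta\eps\ell$, which is further bounded by $\zeta^2/(8T)$ provided $\zeta\ge 8c_1 T\eps\delta^{1-\alpha}$, an assumption automatic under $\zeta\delta^{-\alpha}\ge C''\ell'$ for $\eps$ small. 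Cauchy-Schwarz then gives shift $\le \zeta/2$ on the further event $\{|Y(\delta')|\le\zeta^2/(16T)\}$.

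The last step is to super-exponentially control $Y(\delta')$ uniformly in $\delta'$. To apply Lemma~\ref{mm}, I stop $M^\eps$ at $\tau := \inf\{t\colon [M^\eps]_t\ge\eps\ell \text{ or }\sup_{s\le t}|M^\eps_s|\ge\eta\}\wedge T$, obtaining $\tilde M = M^{\eps,\tau}$ which coincides with $M^\eps$ on $A_\ell\cap B_\eta$; the corresponding stopped $\tilde Y(\delta')$ has $[\tilde Y(\delta')]_T \le 4\delta^2\eta^2\eps\ell$ almost surely. Lemma~\ref{mm} then yields
\[
\bb P_\eps\bigl(|\tilde Y(\delta')|>\zeta^2/(16T)\bigr)\le 2\exp\!\Big(-\tfrac{\zeta^4}{2048\,T^2\delta^2\eta^2\eps\ell}\Big) = 2\exp\!\Big(-\tfrac{\zeta\,\delta^{3\alpha-2}}{2048\,T^2c_1c_2^2\,\eps}\Big);
\]
since $\alpha<1/2$ gives $3\alpha-2<-\alpha$, the associated $\eps\log$-rate $-\zeta\delta^{3\alpha-2}$ dominates the target $-\zeta\delta^{-\alpha}$ for $\delta$ small. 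The supremum over $\delta'\in(0,\delta]$ is handled by discretizing on a dyadic mesh of size $\eps$ and exploiting the Lipschitz continuity in $\delta'$ of the stochastic integrals involved on $A_\ell\cap B_\eta$, which loses only a polylogarithmic factor absorbed into the target by enlarging $C''$. Combining the four estimates via \eqref{bbb} then yields \eqref{es-1}. The main technical obstacle is the stochastic Fubini manipulation used to represent $Y(\delta')$ as a single stochastic integral with an adapted kernel, together with the bookkeeping of the stopping time argument so that Lemma~\ref{mm} may be cleanly applied.
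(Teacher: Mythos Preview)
Your route is genuinely different from the paper's, and the core decomposition (good events $A_\ell$, $B_\eta$, It\^o expansion of $(M^\eps_t-M^\eps_{t-\delta'})^2$, Bernstein on the resulting stochastic integral) is a reasonable strategy. The paper instead uses the Dambis--Dubins--Schwarz representation $M^\eps \stackrel{\mathrm{Law}}{=} \sqrt{\eps}\,B_{\tau^\eps}$, $\tau^\eps_t=\eps^{-1}[M^\eps]_t$, together with Borell's inequality for the $\alpha$-H\"older seminorm of Brownian motion on $[0,\ell]$. That single concentration bound controls \emph{all} increments of $M^\eps$ simultaneously (in the random clock $\tau^\eps$), and then a short deterministic computation using monotonicity of $\tau^\eps$ and Jensen yields $\omega^1(M^\eps;\delta)\le C\delta^\alpha \ell$ on the good event. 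In particular, the supremum over $\delta'$ in the definition of $\omega^1$ never has to be handled probabilistically.

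The place where your argument is not yet a proof is precisely this supremum. The claim of ``Lipschitz continuity in $\delta'$ of the stochastic integrals involved on $A_\ell\cap B_\eta$'' is false as stated: the kernel $K_r^{\delta'}$ is indeed pathwise Lipschitz in $\delta'$ with constant $2\eta$ (after your change of variables one gets $K_r^{\delta'_1}-K_r^{\delta'_2}=-\int_{r-\delta'_2}^{r-\delta'_1}(M^\eps_r-M^\eps_u)\,\rmd u$), but a stochastic integral of a parameter-Lipschitz integrand is \emph{not} pathwise Lipschitz in the parameter; you only get $[\tilde Y(\delta'_1)-\tilde Y(\delta'_2)]_T\le 4\eta^2|\delta'_1-\delta'_2|^2\eps\ell$, hence sub-Gaussian increments. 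To turn this into a uniform bound on $\sup_{\delta'\le\delta}|\tilde Y(\delta')|$ at the correct exponential scale you need a genuine chaining argument (dyadic refinement with geometrically decreasing thresholds $\rho_k$ and a union bound at each scale), not a single mesh of size $\eps$ plus a Lipschitz interpolation. That chaining does work and reproduces the same rate as your single-$\delta'$ estimate, so your strategy can be completed; but the step as written is the missing idea, and it is exactly what the paper's time-change plus Borell shortcut avoids.
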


\begin{proof}
By the representation of continuous martingales as time-changed Brownian motions, see, e.g., \cite[Chap.~V, Thm.~1.6]{RY}, 
\begin{equation}
\label{mb}
M^\eps \stackrel{\rm Law}= \eps^{\frac 12} B_{\tau^\eps}\;,
\end{equation}
where $B$ is a standard Brownian motion and $\tau^\eps_t = \eps^{-1} [M^\eps]_t$. By the Borell's inequality \cite[Thm.~2.1]{Adler}, for any $\alpha\in \big[0,\frac 12\big)$ and $S>0$, 
\begin{equation}
\label{es0}
\bb P\Big(\sup_{s,s'\in [0,S]} \frac{|B_s-B_{s'}|}{|s-s'|^\alpha} > \lambda\Big) \le 4 \exp \Big(-\frac{(\lambda-e_S)^2}{2\sigma_S} \Big)\;,  \qquad \lambda \ge e_S\;,
\end{equation}
where, using also the parabolic scale invariance of Brownian motion, 
\begin{equation}
\label{es}
\begin{split}
e_S & := \bb E\Big(\sup_{s,s'\in [0,S]} \frac{|B_s-B_{s'}|}{|s-s'|^\alpha}\Big)=S^{\frac 12 -\alpha} e_1 \;, \\ \sigma_S & := \sup_{s,s'\in [0,S]} \bb E \Big( \frac{|B_s-B_{s'}|^2}{|s-s'|^{2\alpha}}\Big) = S^{1-2\alpha}\sigma_1\;.
\end{split}
\end{equation}

Fix $\alpha\in (0,\frac 12)$, $\ell\ge\ell'$, and let
\[
\mc B_\ell := \Big\{\sup_{t,t'\in [0,T]} \frac{|M^\eps_t-M^\eps_{t'}|}{|\tau^\eps_t-\tau^\eps_{t'}|^\alpha} \le \lambda_\ell\Big\}\;,
\]
with $\lambda_\ell>0$ to be fixed below. We have,
\[
\bb P_\eps \big(\mc B_\ell^\mathrm{c}\big)  \le 2 \Big[ \bb P_\eps \big(\mc B_\ell^\mathrm{c}\cap\{\tau^\eps_T\le \ell\}\big) \vee \bb P_\eps \big(\tau^\eps_T > \ell\big) \Big]\;.
\]
By \eqref{mb} the first probability in the right-hand side can be bounded by using \eqref{es0} and \eqref{es} with $(\lambda,S)$ replaced by $(\eps^{-\frac 12}\lambda_\ell,\ell)$, while a bound for the second one is given by \eqref{pp7}. Therefore, choosing $\lambda_\ell = \tilde C \ell^{1-\alpha}$, $\ell\ge \tilde C  \ell'$ for a suitable $\tilde C =\tilde C (\eps',C')$ large enough we obtain, using \eqref{bbb},
\begin{equation}
\label{es1}
\log\bb P_\eps \big(\mc B_\ell^\mathrm{c}\big) \le \eps \log 4 + \bigg(\eps\log 4  - \eps \frac{(\eps^{-\frac 12}\lambda_\ell - \ell^{\frac 12 - \alpha} e_1)^2}{2\ell^{1-2\alpha}\sigma_1}\bigg) \vee \bigg(- \frac{\ell}{C'}\bigg) \le  - \frac{\ell}{2C'} \;.
\end{equation}
By using that $M^\eps_0=0$, the monotonicity of $t\mapsto \tau^\eps_t$, the concavity of $x\mapsto x^\alpha$, and Jensen inequality, a straightforward computation yields, 
\[
\omega^1(M^\eps;\delta) \le 2\lambda_\ell \delta (\tau^\eps_T)^\alpha + \lambda_\ell T^{1-\alpha}\delta^\alpha (\tau^\eps_T)^\alpha \quad \mbox{on the event $\mc B_\ell$,}
\]
which implies, as $\lambda_\ell = \tilde C \ell^{1-\alpha}$, 
\[
\omega^1(M^\eps;\delta) \le \tilde C (2\delta + T^{1-\alpha}\delta^\alpha)\ell \le C\delta^\alpha\ell \quad \mbox{on the event $\mc B_\ell\cap \{\tau^\eps_T\le \ell\}$.}
\]
Hence, given $\zeta>0$ and choosing $\ell = C \zeta \delta^{-\alpha}$, the set $\{\omega^1(M^\eps;\delta) >\zeta\}$ is contained in $\mc B_\ell^\mathrm{c}\cup \{\tau^\eps_T>\ell\}$. By choosing $C''\ge 1$ large enough, the estimate \eqref{es-1} follows by \eqref{pp7} and \eqref{es1}.
\qed\end{proof}

\noindent\textit{Proof of Proposition \ref{t:ene}}\hspace{2truept} 
By Ito's formula and \eqref{1}, 
\begin{equation}
\label{pp2}
z^\phi_t = |V_\eps^{\bar u_\eps^0}|(\phi) + D^{\phi,1}_t + D^{\phi,2}_t + N^\phi_t + R^\phi_t\;, 
\end{equation}
where 
\[
\begin{split}
D^{\phi,1}_t & := -\int_0^t\! \int\! \nabla\phi\cdot\nabla u \Big(\eps\Delta u - \frac 1\eps W'(u)\Big) \, \rmd x\, \rmd s\;, \\ D^{\phi,2}_t & := \frac 1 \eps  \int_0^t\! \int\! \phi \Big(\eps\Delta u - \frac 1\eps W'(u)\Big)^2 \, \rmd x\, \rmd s\;,  \end{split}
\]
and, after a few integration by parts, the It\^o term $R^\phi$ reads, 
\begin{equation}
\label{pp2.5}
\begin{split}
& R^\phi_t = \eps\lambda_\eps \int_0^t\!\int\!\int\! \phi(x)\big[\nabla j_\eps(x-y)\big]^2 \,\rmd x\,\rmd y\, \rmd s \\ & \qquad\qquad + \lambda_\eps \int_0^t\!\int\! \phi \frac 1\eps (j_\eps*j_\eps)(0) W''(u) \,\rmd x\,\rmd s\;.
\end{split}  
\end{equation}
Finally, $N^\phi$ is a $\bb P_\eps$-martingale with quadratic variation,
\[
\begin{split}
[N^\phi]_t & = 2\lambda_\eps \int_0^t\! \int\! \Big\{j_\eps * \Big[\eps\nabla\phi\cdot\nabla u + \phi  \Big(\eps\Delta u-\frac 1\eps W'(u)\Big) \Big]\Big\}^2\, \rmd x\,\rmd s \\ & \le 4\eps\lambda_\eps \int_0^t\! \int\! \Big[\eps |\nabla\phi|^2 |\nabla u|^2+ \phi^2 \, \frac 1\eps \Big(\eps\Delta u - \frac 1\eps W'(u)\Big)^2\Big]\, \rmd x\,\rmd s \\ & \le 4\eps\lambda_\eps  \Big[ 2T \|\nabla\phi\|_\infty^2 \sup_{s\in [0,T]} \mc F(u_s) + \|\phi\|_\infty^2 \int_0^T\! \mc W_\eps(u_t)\, \rmd t \Big]\;.
\end{split}
\]

By Proposition~\ref{prop:1}, this bound implies that there exists $C>0$ such that, for any $0<\eps< \eps_0$ and $\ell \geq 8(1+T) \ell_0$,
\begin{equation}
\label{N}
 \eps\lambda_\eps \log \bb P_\eps\big( (\eps\lambda_\eps)^{-1}[N^\phi]_T>\ell\big) \le - \frac{\ell}{C(\|\phi\|_\infty+\|\nabla\phi\|_\infty)^2}\;.
\end{equation}
By applying Lemma \ref{lem:boh} to the family of martingales $\{ \| \phi \|^{-1}_{C^1} N^\phi \}_\eps$ there is $C''\ge 1$ such that the following holds. For any $\zeta>0$ and $\delta\in (0,1\wedge T]$ with $\zeta\delta^{-\alpha} \ge 4 C''  8(1+T) \ell_0$, and any $\eps\in (0,\eps_0]$,
\begin{equation}
\label{es-2}
\eps \lambda_\eps \log\bb P_\eps\big(\omega^1(N^\phi;\delta)>\zeta/4\big) \le  - \frac{\zeta}{4C'' (\|\phi\|_\infty+\|\nabla\phi\|_\infty)\delta^\alpha}\;.
\end{equation}

We next estimate the second and third term on the right-hand side of \eqref{pp2}. On one hand, 
\[
\int_0^\delta \!\big ( \big|  D^{\phi,2}_t\big|+ \big|  D^{\phi,2}_{T-t}\big| \big)\, \rmd t + \int_\delta^T \!\big| D^{\phi,2}_t - D^{\phi,2}_{t-\delta} \big|\, \rmd t \le 3\delta \|\phi\|_\infty \int_0^T\! \mc W_\eps(u_t)\, \rmd t\;.
\]
On the other hand, by Young's inequality, 
\[
\begin{split}
\int_0^\delta \! \big(\big |  D^{\phi,1}_t\big|+ \big|  D^{\phi,1}_{T-t}\big| \big)\, \rmd t & \le  2\delta \|\nabla\phi\|_\infty \int_0^T\! \int\! \Big[\eps |\nabla u|^2 + \frac 1\eps \Big(\eps\Delta u - \frac 1\eps W'(u)\Big)^2\Big]\, \rmd x\, \rmd t \\ & \le 2\delta \|\nabla\phi\|_\infty \Big[ 2T \sup_{s\in [0,T]} \mc F_\eps(u_s) + \int_0^T\! \mc W_\eps(u_t)\, \rmd t \Big]\;,
\end{split}
\]
and, for the same reason,
\[
\int_\delta^T \!\big| D^{\phi,1}_t - D^{\phi,1}_{t-\delta} \big|\, \rmd t \le \delta \|\nabla\phi\|_\infty \Big[ 2T \sup_{s\in [0,T]} \mc F(u_s) + \int_0^T\! \mc W_\eps(u_t)\, \rmd t \Big]\;.
\]
Since $\omega^1(D^{\phi,1}+D^{\phi,2};\delta) \le \omega^1(D^{\phi,1}; \delta)+\omega^1(D^{\phi,2};\delta)$, by Proposition \ref{prop:1} and the previous estimates we conclude there exists $C>0$ such that for any $\zeta>0$ and $\delta\in (0,1\wedge T]$ with $\zeta\delta^{-1} \ge 2(1+2T) \ell_0$, and any $\eps\in (0,\eps_0]$,
\begin{equation}
\label{od}
\eps \lambda_\eps \log \bb P_\eps \big( \omega^1(D^{\phi,1}+D^{\phi,2};\delta)> \zeta/2 \big) \le  -\frac{\zeta}{C \delta (\|\phi\|_\infty+\|\nabla\phi\|_\infty)} \;.
\end{equation}
It remains to consider the Ito term. In view of \eqref{2.5b}, \eqref{pp2.5}, and the inequality $|W''|\leq C(1+W)$, there exists $\eps_2\leq \eps_0$ such that for any $\eps\in (0, \eps_2]$ we have,
\[ 
\omega^1(R^\phi;\delta) \leq C \delta (1+\| \phi\|_\infty  T \sup_{s\in [0,T]} 
\mc F_\eps(u_s) ) \; .
\]
Again by Proposition \ref{prop:1}, there exists $C\geq 1$ such that for any $\zeta>0$ and $\delta\in (0,1\wedge T]$ with $\zeta\delta^{-1} \ge 8C  (1+T) \ell_0$, and any $\eps\in (0,\eps_2]$,
\begin{equation}
\label{od1}
\eps \lambda_\eps \log \bb P_\eps \big( \omega^1(R^\phi;\delta)> \zeta/4 \big) \le  -\frac{\zeta}{C \delta \|\phi\|_\infty} \;.
\end{equation}
Combining \eqref{es-2}, \eqref{od}, and \eqref{od1}, a simple inclusion of events together with \eqref{bbb} implies that there exists $C\geq 1$ such that for any $\zeta>0$ and $\delta\in (0,1\wedge T]$ with $\zeta\delta^{-\alpha} \ge C \ell_0$, and any $\eps\in (0,\eps_2]$,
\[
\eps\lambda_\eps\, \log\bb P_\eps\big(\omega^1(z^\phi;\delta)>\zeta\big) \le \eps\lambda_\eps\log 3 - \frac{\zeta}{C\|\phi\|_{C^1}\delta^\alpha}\;.
\] 
Finally, since $\eps\lambda_\eps \to 0$ as $\eps\to 0$, by choosing a possibly smaller $\eps_2$ the claim \eqref{z2} follows for $C_2\ge 1$ large enough. 
\qed

\subsection{Bounds on the stochastic currents}
\label{sec:3.3b}

Recalling the definition of the stochastic currents in Subsection~\ref{sec:3.3}, we first prove that - with probability super-exponentially close to one - the stochastic current $J_\eps^u$ takes values in bounded subsets of $\bs H^{-\bs s}$. 

\begin{lem}
\label{lem:J3}
Given $\bs s\in (\frac 12,1)\times (\frac d2 ,+\infty)\times (\frac{d-1}2,+\infty)$,
\[
\lim_{\ell\to +\infty} \varlimsup_{\eps\to 0} \eps\lambda_\eps \log \bb P_\eps\big(\|J_\eps^u\|_{\bs H^{-\bs s}}^2 >\ell \big) = -\infty\;.
\]
\end{lem}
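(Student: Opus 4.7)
The plan is to apply It\^o's formula to the stochastic integral defining $J_\eps^u$, split it into a bounded-variation drift and a martingale, and bound each in the dual Sobolev norm $\bs H^{-\bs s}$. Using \eqref{1}, for any $f\in\bs H^{\bs s}$ I would write
\[
J_\eps^u(f)=A_\eps^u(f)+M_\eps^u(f),
\]
where
\[
A_\eps^u(f):=-\int_0^T\!\!\int \nabla u_t\cdot f_t\big(\cdot,(\mathbf{n}^u)^\perp\big)\,\big(\eps\Delta u_t-\eps^{-1}W'(u_t)\big)\,\rmd x\,\rmd t
\]
is deterministic given $u$, and $M_\eps^u(f):=-\eps\sqrt{2\lambda_\eps}\int_0^T\langle\nabla u_t\cdot f_t(\cdot,(\mathbf{n}^u)^\perp),\,\rmd\alpha^\eps_t\rangle_{L^2}$ is the stochastic part.

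For the drift, Cauchy--Schwarz in space followed by the Sobolev embedding $\bs H^{\bs s}\hookrightarrow C([0,T]\times\bb T^d\times\Lambda_{d-1};\bb R^d)$ yields
\[
|A_\eps^u(f)|\le \|f\|_\infty\int_0^T\!\sqrt{2\mc F_\eps(u_t)\,\mc W_\eps(u_t)}\,\rmd t\le C_{\bs s}\,\|f\|_{\bs H^{\bs s}}\sqrt{2T}\,\ell
\]
on the event $A_\ell:=\{\sup_t\mc F_\eps(u_t)+\int_0^T\mc W_\eps(u_t)\,\rmd t\le\ell\}$, hence $\|A_\eps^u\|_{\bs H^{-\bs s}}\le C\ell$ there. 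For the martingale, I would fix an orthonormal basis $(e_i)$ of $\bs H^{\bs s}$; by Parseval, $\|M_\eps^u\|_{\bs H^{-\bs s}}^2=\sum_i |M_\eps^u(e_i)|^2$, and each $t\mapsto M^u_{\eps,t}(e_i)$ is a real continuous martingale whose quadratic variations, summed via Young's inequality for convolution, satisfy
\[
\sum_i[M_\eps^u(e_i)]_T\le 2\eps^2\lambda_\eps\|j_\eps\|_{L^1}^2\int_0^T\!\!\int |\nabla u_t(x)|^2\,\sum_i\big|e_{i,t}\big(x,(\mathbf{n}^u)^\perp\big)\big|^2\rmd x\,\rmd t.
\]
Since $\bs s$ is strictly above the embedding threshold in each of the three factors, the pointwise sum $\sum_i|e_{i,t}(x,\Sigma)|^2$ equals the diagonal of the reproducing kernel of $\bs H^{\bs s}$ and is bounded by a constant $K_{\bs s}$ uniformly in $(t,x,\Sigma)$. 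Using $\eps\|\nabla u_t\|_{L^2}^2\le 2\mc F_\eps(u_t)$, this gives $\sum_i[M_\eps^u(e_i)]_T\le C\,\eps\lambda_\eps\,\ell$ on $A_\ell$.

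The conclusion then follows by combining Proposition~\ref{prop:1}, which gives $\bb P_\eps(A_\ell^c)\le\exp\{-\ell/(20\eps\lambda_\eps)\}$, with a Bernstein-type inequality for the Hilbert-valued martingale $M_\eps^u$: viewing it as a continuous $\bs H^{-\bs s}$-valued martingale whose trace of quadratic variation is bounded by $C\eps\lambda_\eps\ell$ on $A_\ell$, one expects
\[
\bb P_\eps\big(\|M_\eps^u\|_{\bs H^{-\bs s}}^2>r,\;A_\ell\big)\le 2\exp\Big(-\frac{r}{C\eps\lambda_\eps\ell}\Big).
\]
Choosing, for $\ell'$ large, $\ell$ comparable to $\sqrt{\ell'}$ balances the two resulting exponentials and yields $\eps\lambda_\eps\log\bb P_\eps(\|J_\eps^u\|_{\bs H^{-\bs s}}^2>\ell')\le -c\sqrt{\ell'}$, which tends to $-\infty$ as $\ell'\to+\infty$. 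The main obstacle is justifying the vector-valued Bernstein estimate in the last display: while Lemma~\ref{mm} handles the real-valued case, the extension to a Hilbert-valued continuous martingale with trace-class quadratic variation requires invoking either a Pinelis-type concentration inequality or, in the spirit of Lemma~\ref{lem:boh}, a Dambis--Dubins--Schwarz time-change applied componentwise, combined with the uniform trace bound on $A_\ell$ to control the resulting Gaussian tail.
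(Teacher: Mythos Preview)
Your decomposition into drift $A_\eps^u$ and martingale $M_\eps^u$, and your control of the drift via Cauchy--Schwarz and the energy estimate, match the paper's approach (recorded there as the bound $\|J_\eps^u\|_{\bs H^{-\bs s}}^2\le C(\mc C_{1,\eps}^u+\mc C_{2,\eps}^u)$ from Remark~\ref{rem:boh}). The reproducing-kernel device for summing the quadratic variations is also essentially what the paper does, via an explicit Fourier basis and a probability measure $\Gamma$ built from the Sobolev weights.

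The obstacle you flag is real, and the paper resolves it not through a Hilbert-valued concentration inequality but by reducing to a single \emph{real}-valued martingale to which Lemma~\ref{mm} applies directly. Writing $Z^\eps_t(a)$ for the Fourier-component martingales (indexed by $a=(m,n,k,q)$), the paper sets $Y^\eps_t(a):=Z^\eps_t(a)^2-[Z^\eps(a)]_t$, which is again a martingale, and averages against $\Gamma$ to obtain $X^\eps_t:=\int Y^\eps_t(a)\,\Gamma(\rmd a)$. A direct computation of the brackets $[Y^\eps(a),Y^\eps(b)]_t$ shows
\[
[X^\eps]_t\le C\,\eps\lambda_\eps\sup_{s}\mc F_\eps(u_s)\,\sup_{s\le t}X^\eps_s+\big(C\,\eps\lambda_\eps\sup_{s}\mc F_\eps(u_s)\big)^2,
\]
which is precisely the structure $[M]_\tau\le\beta\sup M+C$ required by Lemma~\ref{mm}. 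This yields the super-exponential bound on $X^\eps_T$, and since $\mc C_{2,\eps}^u=\gamma X^\eps_T+\gamma\int[Z^\eps(a)]_T\,\Gamma(\rmd a)$ with the second term controlled by the energy, the proof closes. The advantage over your proposed route is that no vector-valued martingale inequality is needed: squaring first and then averaging converts the problem into one the scalar Lemma~\ref{mm} handles with tools already in the paper.
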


\begin{proof}
By Remark \ref{rem:boh},
\begin{equation}
\label{num3}
\|J_\eps^u\|_{\bs H^{-\bs s}}^2 \le C \big(\mc C_{1,\eps}^u + \mc C_{2,\eps}^u\big)\;,
\end{equation}
where
\[
\sqrt{\mc C_{1,\eps}^u} \le \int_0^T\! \int\! \frac\eps 2|\nabla u_t|^2\,\rmd x\,\rmd t + \int_0^T\, \mc W_\eps(u_t)\,\rmd t \le T \sup_{t\in[0,T]} \mc F_\eps(u_t) + \int_0^T\, \mc W_\eps(u_t)\,\rmd t
\]
and
\[
\mc C_{2,\eps}^u = \sum_{m,n,k} \int\! (1+n^2)^{-s_1} (1+|k|^2)^{-s_2}(1+|q|^2)^{-s_3} |Z_{n,T}^{\eps,m}(k,q)|^2\, \rmd q\;,
\]
with (see \eqref{eq:enk} for the definition of the functions $e_{n,k,q}^m$)
\[
Z_{n,t}^{\eps,m}(k,q) = \sqrt{2\eps\lambda_\eps} \int_0^t\! \Big\langle \sqrt\eps\nabla u_s \cdot e_{n,k,q}^m\big(s,\cdot,\tfrac{\nabla u_s}{|\nabla u_s|}\big)\;,\rmd \alpha^\eps_s \Big \rangle_{L^2} \;.
\]
By Proposition \ref{prop:1},
\begin{equation}
\label{ppr}
\lim_{\ell\to +\infty}\varlimsup_{\eps\to 0} \eps\lambda_\eps \log \bb P_\eps \Big(\mc C_{1,\eps}^u > \ell \Big) = -\infty\;.
\end{equation}
Set now
\[
\gamma := \sum_{m,n,k} \int\! (1+n^2)^{-s_1} (1+|k|^2)^{-s_2}(1+|q|^2)^{-s_3}\, \rmd q
\]
and introduce the probability measure $\Gamma$ on $\{1,\ldots,d\}\times\bb Z_+\times\bb Z^d\times \bb R^d$ defined by
\[
\Gamma (\rmd a) := \gamma^{-1} (1+n^2)^{-s_1} (1+|k|^2)^{-s_2}(1+|q|^2)^{-s_3}\, \rmd\wp(m,k,n) \,\rmd q\;, \;\; a = (m,n,k,q)\;,
\]
where $\wp$ is the counting measure on $\{1,\ldots,d\}\times\bb Z_+\times\bb Z^d$. Let also $e_a:=e_{n,k,q}^m$. Then,
\begin{equation}
\label{pps}
\mc C_{2,\eps}^u = \gamma \int\! Z^\eps_T(a)^2\,\Gamma(\rmd a)\;,  
\end{equation}
where $Z^\eps_t(a)=Z_{n,t}^{\eps,m}(k,q)$, $t\in [0,T]$, is a $\bb P_\eps$ continuous martingale. The \textit{bracket}, see, e.g., \cite[Chap.~IV, Def.~1.10]{RY}, between $Z^\eps(a)$ and $Z^\eps(b)$ is 
\begin{equation}
\label{ppa}
\begin{split}
& [Z^\eps(a),Z^\eps(b)]_t \\ & \;\; = 2\eps\lambda_\eps \int_0^t\! \eps \Big\langle j_\eps * \Big(\nabla u_s \cdot e_a\big(s,\cdot,\tfrac{\nabla u_s}{|\nabla u_s|}\big)\Big), j_\eps * \Big(\nabla u_s \cdot e_b\big(s,\cdot,\tfrac{\nabla u_s}{|\nabla u_s|}\big)\Big)\Big\rangle_{L^2}\,\rmd s\\ & \;\;  \le 2\eps\lambda_\eps \|e_a\|_\infty\|e_b\|_\infty\int_0^t\! \int\! \eps |\nabla u_s|^2\, \rmd x\,\rmd s \le C\eps\lambda_\eps \sup_{s\in[0,t]} \mc F_\eps(u_s) \;. 
\end{split}
\end{equation}
We next introduce the family of $\bb P_\eps$-martingales $Y^\eps_t(a) := Z^\eps_t(a)^2 - [Z^\eps(a)]_t$, $t\in [0,T]$, $a\in \{1,\ldots,d\}\times\bb Z_+\times\bb Z^d\times \bb R^d$. By a straightforward computation, the bracket between $Y^\eps(a)$ and $Y^\eps(b)$ is 
\[
\begin{split}
& [Y^\eps(a),Y^\eps(b)]_t = 4 \int_0^t\! Z^\eps_s(a)Z^\eps_s(b)\, \rmd  [Z^\eps(a),Z^\eps(b)]_s \\ & \qquad \le C \eps\lambda_\eps \sup_{s'\in[0,t]} \mc F_\eps(u_{s'}) \int_0^t\! \big( Z^\eps_s(a)^2+Z^\eps_s(b)^2\big) \,\rmd s \\ & \qquad\le C \eps\lambda_\eps \sup_{s'\in[0,t]} \mc F_\eps(u_{s'}) \int_0^t\! \big( Y^\eps_s(a)+Y^\eps_s(b)\big) \,\rmd s + C\Big(\eps\lambda_\eps \sup_{s'\in[0,t]} \mc F_\eps(u_{s'})\Big)^2\;.
\end{split}
\]
Now, the process $X^\eps_t :=\int\! Y^\eps_t(a)\,\Gamma(\rmd a)$ is still a $\bb P_\eps$ martingale with quadratic variation,
\begin{equation}
\label{ppx}
\begin{split}
[X^\eps]_t & = \int\! [Y^\eps(a),Y^\eps(b)]_t\,\Gamma(\rmd a)\,\Gamma(\rmd b) \\ & \le C \eps\lambda_\eps \sup_{s'\in[0,t]} \mc F_\eps(u_{s'}) \int_0^t\! X^\eps_s \,\rmd s + C\Big(\eps\lambda_\eps \sup_{s'\in[0,t]} \mc F_\eps(u_{s'})\Big)^2\\ &  \le C \eps\lambda_\eps \sup_{s'\in[0,t]} \mc F_\eps(u_{s'}) \sup_{s\in[0,t]} X^\eps_s + C\Big(\eps\lambda_\eps \sup_{s'\in[0,t]} \mc F_\eps(u_{s'})\Big)^2\;.
\end{split}
\end{equation}
Given $\ell,\ell'>0$ we write,
\[
\bb P_\eps \Big(X^\eps_T > \ell \Big) \le \bb P_\eps \Big(X^\eps_T > \ell \;, \;  \sup_{s\in[0,T]} \mc F_\eps(u_{s}) \le \ell' \Big) + \bb P_\eps \Big( \sup_{s\in[0,T]} \mc F_\eps(u_{s}) > \ell' \Big)\;. 
\]
The bound \eqref{ppx} and Lemma \ref{mm} imply 
\[
\varlimsup_{\eps\to 0} \eps\lambda_\eps \log \bb P_\eps \Big(X^\eps_T > \ell \;, \;  \sup_{s\in[0,T]} \mc F_\eps(u_{s}) \le \ell'\Big) \le -\frac{\ell}{2C\ell'}\;. 
\]
By using Proposition \ref{prop:1} and taking first the limit $\ell\to+\infty$ and then $\ell'\to+\infty$ we conclude that
\begin{equation}
\label{ppy}
\lim_{\ell\to +\infty}\varlimsup_{\eps\to 0} \eps\lambda_\eps \log \bb P_\eps \Big(X^\eps_T > \ell \Big) = -\infty\;.
\end{equation}
We finally observe that by \eqref{pps} and \eqref{ppa},
\[
\mc C_{2,\eps}^u = \gamma X^\eps_T + \gamma \int\! [Z^\eps(a)]_T\,\Gamma(\rmd a) \le \gamma X^\eps_T + C\eps\lambda_\eps \sup_{s\in[0,T]} \mc F_\eps(u_s)\;,
\]
hence, applying \eqref{ppy} and again Proposition \ref{prop:1},
\[
\lim_{\ell\to +\infty}\varlimsup_{\eps\to 0} \eps\lambda_\eps \log \bb P_\eps \Big(\mc C_{2,\eps}^u > \ell \Big) = -\infty\;,
\]
which, together with \eqref{num3} and \eqref{ppr} concludes the proof.
\qed\end{proof}

The next two lemmata will allow us to represent currents in terms of velocities of $L^2$-flows.

\begin{lem}
\label{lem:J1}
\[
\lim_{\ell\to \infty}\sup_f \varlimsup_{\eps\to 0}\, \eps\lambda_\eps\, \log\bb P_\eps\Big( J_\eps^u(f) -\frac 12\int_0^T\! V_{\eps,t}^u\big(|f_t|^2\big)\, \rmd t > \ell\Big) = -\infty\;,
\]
where the supremum is carried out over $f\in C^\infty([0,T]\times\bb T^d\times\Lambda_{d-1};\bb R^d)$.
\end{lem}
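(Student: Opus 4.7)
The plan is to combine the It\^o decomposition of $J^u_\eps(f)$ arising from the SPDE \eqref{1} with an exponential supermartingale estimate, tuning the Girsanov-type parameter so that the compensator $-\tfrac12\int_0^T V^u_{\eps,t}(|f_t|^2)\,\rmd t$ absorbs both the quadratic variation of the martingale part and, via Young's inequality, the drift part. This cancellation is precisely what makes the resulting tail estimate $f$-free, so that the supremum over $f$ may be brought inside without loss.

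Write $J^u_\eps(f) = D + M$, where
\[
D := -\eps\int_0^T\!\!\int\!\nabla u_t\cdot f_t(x,(\mathbf{n}^u)^\perp)\bigl[\Delta u_t - \eps^{-2}W'(u_t)\bigr]\,\rmd x\,\rmd t
\]
and $M$ is the It\^o integral against $\sqrt{2\lambda_\eps}\,\rmd\alpha^\eps_t$. Using the covariance of $\alpha^\eps$, Young's inequality $\|j_\eps*g\|_{L^2}\le\|g\|_{L^2}$, and the pointwise bound $\eps|\nabla u|^2\le 2\mu^u_\eps$, the quadratic variation obeys $[M]_T\le 4\eps\lambda_\eps\int_0^T V^u_{\eps,t}(|f_t|^2)\,\rmd t$. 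By Cauchy-Schwarz in space and time together with the same density bound, and the elementary inequality $\sqrt{2AB}\le \tfrac14 A + 2B$, one obtains
\[
|D|\le \sqrt{2\!\int_0^T V^u_{\eps,t}(|f_t|^2)\,\rmd t\cdot \int_0^T \mc W_\eps(u_t)\,\rmd t}\le \tfrac14\!\int_0^T\! V^u_{\eps,t}(|f_t|^2)\,\rmd t + 2\!\int_0^T\!\mc W_\eps(u_t)\,\rmd t.
\]

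Setting $Z := J^u_\eps(f) - \tfrac12\int V^u_{\eps,t}(|f_t|^2)\,\rmd t$, the drift estimate provides the inclusion
\[
\{Z>\ell\}\subset\bigl\{M-\tfrac14\!\int V^u_{\eps,t}(|f_t|^2)\,\rmd t>\ell/2\bigr\}\cup\bigl\{\!\int_0^T\!\mc W_\eps(u_t)\,\rmd t>\ell/4\bigr\}.
\]
The second event is controlled super-exponentially by Proposition~\ref{prop:1} with rate $\ell/80$. For the first, choose $\beta:=(8\eps\lambda_\eps)^{-1}$; the bound on $[M]_T$ yields $\tfrac{\beta^2}{2}[M]_T\le \tfrac{\beta}{4}\int V^u_{\eps,t}(|f_t|^2)\,\rmd t$, whence $\exp\bigl(\beta[M-\tfrac14\!\int V^u]\bigr)\le\exp\bigl(\beta M-\tfrac{\beta^2}{2}[M]_T\bigr)$, and the standard exponential supermartingale property (valid for continuous local martingales, applied after the truncation procedure mentioned at the start of Section~\ref{sec:3}) gives $\bb E\exp\bigl(\beta[M-\tfrac14\!\int V^u]\bigr)\le 1$. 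Chernoff then produces probability at most $\exp(-\ell/(16\eps\lambda_\eps))$. Combining the two estimates via \eqref{bbb}, letting $\eps\to 0$ and then $\ell\to\infty$, the claim follows; crucially, every constant appearing above is independent of $f$.

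The main subtlety is the matching of the two quarters $\tfrac14+\tfrac14=\tfrac12$: one is spent on Young's inequality in the drift estimate, producing a bounded multiple of $\int\mc W_\eps$ which is in turn controlled by Proposition~\ref{prop:1}, while the other has to be paired with the quadratic variation in the exponential supermartingale so that the Novikov exponent $\beta$ is of order $(\eps\lambda_\eps)^{-1}$, exactly the reciprocal of the large deviations speed. A less sharp Cauchy-Schwarz, or an imbalanced split, either sacrifices uniformity in $f$ or produces an exponential moment of the wrong order. The remaining technical point, that $M$ is only \emph{a priori} a continuous local martingale, is handled by the routine localization/truncation argument adopted throughout Section~\ref{sec:3}.
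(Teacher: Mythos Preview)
Your proof is correct and follows essentially the same approach as the paper's: the same It\^o decomposition $J^u_\eps(f)=A^f_T+N^f_T$ (your $D+M$), the same quadratic-variation bound $[N^f]_T\le 4\eps\lambda_\eps\int_0^T V^u_{\eps,t}(|f_t|^2)\,\rmd t$, the same Young estimate $A^f_T\le\tfrac14\int V+2\int\mc W_\eps$, and the same exponential-supermartingale Chernoff bound with parameter of order $(\eps\lambda_\eps)^{-1}$. The only cosmetic difference is that you write the union-of-events inclusion explicitly before estimating, while the paper treats the drift and martingale contributions in parallel and combines at the end.
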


\begin{proof}
From the very definition \eqref{J}, $J_\eps^u(f) = A^f_T + N^f_T$ where 
\[
A^f_T = - \eps\int_0^T\! \int\! \nabla u_t\cdot f_t(\cdot,(\mathbf{n}^u)^\perp)\, \left[\Delta u - \frac1{\eps^2} W'(u) \right] \,\rmd x\,\rmd t 
\]
and $N^f$ is a continuous $\bb P_\eps$-martingale with quadratic variation 
\begin{equation}
\label{pap:1}
[N^f]_t = 2\eps^2\lambda_\eps \int_0^t\! \big\|j_\eps * \big(\nabla u_s\cdot f_s(\cdot,(\mathbf{n}^u)^\perp)\big)\big\|_{L^2}^2\, \rmd s \le 4\eps\lambda_\eps\int_0^t\! V_{\eps,s}^u\big(|f_s|^2\big)\, \rmd s \;,
\end{equation}
where we used that $\eps|\nabla u|^2\,\rmd x\le 2\, \rmd |V^u_{\eps}|$. 

By Young inequality, 
\[
A^f_T \le \frac 14\int_0^T\! V_{\eps,t}^u\big(|f_t|^2\big)\, \rmd t + 2\int_0^T\! \mc W_\eps(u)\,\rmd t \;,
\]
so that, by Proposition~\ref{prop:1},
\begin{equation}
\label{nume}
\lim_{\ell\to \infty}\sup_f \varlimsup_{\eps\to 0}\, \eps\lambda_\eps\, \log\bb P_\eps\Big( A^f_T -\frac 14\int_0^T\!V_{\eps,t}^u\big(|f_t|^2\big)\, \rmd t > \ell\Big) = -\infty\;.
\end{equation}
In view of \eqref{pap:1},
\[
N^f_t -\frac 14 \int_0^T\!V_{\eps,t}^u\big(|f_t|^2\big)\, \rmd t \le N^f_t - \frac{1}{16\eps\lambda_\eps}[N^f]_T\;.
\]
For $\beta>0$, by exponential Chebyshev inequality we get, 
\[
\bb P_\eps\Big(N^f_t -\frac 14 \int_0^T\!V_{\eps,t}^u\big(|f_t|^2\big)\, \rmd t>\ell \Big) \le \rme^{-\beta\ell/(\eps\lambda_\eps)}\bb E_\eps\exp\Big\{\frac{\beta} {\eps\lambda_\eps}N^f_t - \frac{\beta}{16(\eps\lambda_\eps)^2} [N^f]_T \Big\}\;.
\]
By choosing $\beta = 1/8$ and using that $\bb E_\eps\exp\big\{aN^f_t-\frac 12 a^2 [N^f]_T\big\} \le 1$ for $a\in\bb R$, the above displayed bound yields  
\[
\lim_{\ell\to \infty}\sup_f \varlimsup_{\eps\to 0}\, \eps\lambda_\eps\, \log\bb P_\eps\Big( N^f_t -\frac 14\int_0^T\!V_{\eps,t}^u\big(|f_t|^2\big)\, \rmd t > \ell\Big) = -\infty\;,
\]
which, combined with \eqref{nume}, concludes the proof.
\qed\end{proof}

In the following lemma we adopt the short notation introduced just after Remark \ref{rem:2}. 

\begin{lem}
\label{lem:J2}
\[
\lim_{\ell\to \infty}\sup_\psi \varlimsup_{\eps\to 0}\, \eps\lambda_\eps\, \log\bb P_\eps\Big( J_\eps^u(\nabla \psi) +\int_0^T\! |V_{\eps,t}^u|(\partial_t\psi_t)\, \rmd t > \ell\Big)= -\infty\;, 
\]
where the supremum is carried out over $\psi\in C^\infty_K((0,T)\times\bb T^d)$ such that $\|\psi\|_\infty \le 1$.
\end{lem}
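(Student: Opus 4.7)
The plan is to derive a deterministic pathwise identity via It\^o's formula applied to a cleverly chosen functional of $u$, and then estimate each of its terms using the super-exponential bounds from Section~\ref{sec:3.1}. Introduce
$F(t,u) := |V_\eps^u|(\psi_t) = \int \psi_t(x)\big[\tfrac{\eps}{2}|\nabla u|^2 + \tfrac{1}{\eps}W(u)\big]\,\rmd x$.
Since $\psi\in C^\infty_K((0,T)\times\bb T^d)$, one has $F(0,u_0) = F(T,u_T) = 0$. The explicit time derivative yields $\partial_t F(t,u_t) = |V^u_{\eps,t}|(\partial_t\psi_t)$, while an integration by parts gives the Fr\'echet derivative
$F'(u_t)[h] = -\int h\, \psi_t\big[\eps\Delta u_t - \tfrac{1}{\eps}W'(u_t)\big]\,\rmd x - \eps \int h\,\nabla\psi_t\cdot\nabla u_t\,\rmd x$,
and the second piece, integrated against $\rmd u_t$, equals precisely $J^u_\eps(\nabla\psi)$ by \eqref{J} (its integrand being independent of $\Sigma$). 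Thus It\^o's formula applied to $F(t,u_t)$ produces the pathwise identity
\begin{equation*}
J_\eps^u(\nabla\psi) + \int_0^T |V^u_{\eps,t}|(\partial_t\psi_t)\,\rmd t
= D^\psi_T + M^\psi_T - R^\psi_T,
\end{equation*}
where, after expanding the stochastic integral through \eqref{1}, the drift is $D^\psi_T := \int_0^T\!\int \psi_t\,\tfrac{1}{\eps}\big(\eps\Delta u_t - \tfrac{1}{\eps}W'(u_t)\big)^2\,\rmd x\,\rmd t$, the martingale is $M^\psi_T := \sqrt{2\lambda_\eps}\int_0^T\langle\psi_t[\eps\Delta u_t - \tfrac{1}{\eps}W'(u_t)], \rmd\alpha^\eps_t\rangle_{L^2}$, and $R^\psi_T$ is the It\^o correction.

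Using $\|\psi\|_\infty \le 1$, one has $D^\psi_T \le \int_0^T\mc W_\eps(u_t)\,\rmd t$; by Young's inequality applied to $\|j_\eps * \cdot\|_{L^2}$, the bracket satisfies $[M^\psi]_T \le C\eps\lambda_\eps \int_0^T \mc W_\eps(u_t)\,\rmd t$. A direct computation of the trace gives
\begin{equation*}
R^\psi_T = \lambda_\eps \int_0^T\Big[\eps\|\nabla j_\eps\|_{L^2}^2 \int\psi_t\,\rmd x + \tfrac{1}{\eps}\|j_\eps\|_{L^2}^2 \int \psi_t W''(u_t)\,\rmd x\Big]\rmd t,
\end{equation*}
and, by the growth bound $|W''|\le C(1+\eps^{-1}W)$ from Assumption~\ref{t:ws} combined with the scaling \eqref{2.5b}, $|R^\psi_T| \le \eta_\eps\,\big(1 + \sup_{t\in[0,T]}\mc F_\eps(u_t)\big)$ uniformly in $\psi$, with $\eta_\eps \to 0$.

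To conclude, split $\{\mathrm{LHS} > \ell\}$ into $\{D^\psi_T>\ell/3\}\cup\{M^\psi_T>\ell/3\}\cup\{|R^\psi_T|>\ell/3\}$ and apply \eqref{bbb}. The first inclusion and Proposition~\ref{prop:1} yield $\varlimsup_\eps\eps\lambda_\eps\log\bb P_\eps(D^\psi_T>\ell/3) \le -\ell/60$. Since $\eta_\eps\to 0$, the third event forces $\sup_t\mc F_\eps(u_t) > \ell/(3\eta_\eps)-1$, so Proposition~\ref{prop:1} gives $\varlimsup_\eps\eps\lambda_\eps\log\bb P_\eps(|R^\psi_T|>\ell/3) = -\infty$ for each fixed $\ell$. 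For the martingale, Lemma~\ref{mm} with $\beta=0$ on $\{\int_0^T\mc W_\eps(u_t)\,\rmd t \le L\}$ yields $\eps\lambda_\eps\log\bb P_\eps(M^\psi_T>\ell/3, \int\mc W_\eps\le L) \le -c\ell^2/L$; combining with Proposition~\ref{prop:1} and optimizing $L\propto\ell$ produces $\varlimsup_\eps\eps\lambda_\eps\log\bb P_\eps(M^\psi_T>\ell/3) \le -c'\ell$. All three bounds are uniform in $\psi$ with $\|\psi\|_\infty\le 1$, so $\sup_\psi \varlimsup_{\eps\to 0}\eps\lambda_\eps \log \bb P_\eps(\mathrm{LHS}>\ell) \le -c''\ell \to -\infty$. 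The main subtlety is the $W''$ term in $R^\psi_T$: the apparent factor $\eps^{-1}\|j_\eps\|_{L^2}^2$ is exactly the quantity forced to vanish by \eqref{2.5b}, and combining this with the growth bound on $W''$ is essential for the uniform control over $\psi$.
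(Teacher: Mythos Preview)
Your proof is correct and follows essentially the same route as the paper's: both apply It\^o's formula to $|V_\eps^u|(\psi_t)$, obtain the same decomposition into the Willmore drift $D^\psi_T$, the martingale $M^\psi_T$ (the paper's $-N^\psi_T$), and the It\^o remainder $R^\psi_T$ (which is exactly \eqref{pp2.5} with $\phi$ replaced by $\psi$), and then control each piece via Proposition~\ref{prop:1} and Lemma~\ref{mm}. The only cosmetic differences are your explicit three-event split and the optimization $L\propto\ell$ for the martingale term, whereas the paper simply invokes Lemma~\ref{mm} together with Proposition~\ref{prop:1} directly.
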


\begin{proof}
By It\^o formula and \eqref{1}, after a few integrations by parts, 
\begin{equation}
\label{vva2b}
\begin{split}
|V_{\eps,T}^u|(\psi_T) - |V_{\eps,0}^u|(\psi_0) & = \int_0^T\! |V_{\eps,t}^u|(\partial_t\psi_t)\, \rmd t + J_\eps^u(\nabla \psi) +  N^\psi_T+R^\psi_T  \\& \quad - \, \frac 1\eps \int_0^T\!\int\!\psi \Big(\eps\Delta u - \frac 1\eps W'(u)\Big)^2 \,\rmd x\,\rmd t \;,
\end{split}
\end{equation}
where $R^\psi_t$ is defined as in \eqref{pp2.5} with $\phi$ replaced by $\psi$ and $N^\psi$ is the continuous $\bb P_\eps$-martingale given by 
\begin{equation}
\label{6b}
N^\psi_t = \int_0^t\! \Big\langle \psi_s\Big[-\eps\Delta u_s + \frac 1\eps W'(u_s)\Big] \,, \rmd u_s - \Big(\Delta u_s - \frac 1{\eps^2} W'(u_s) \Big)\,\rmd s\Big\rangle_{L^2}\;,
\end{equation}
whose quadratic variation is
\[
\begin{split}
& [N^\psi]_t = 2\lambda_\eps \int_0^t\! \Big\|j_\eps * \Big[ \psi_s \Big(\eps\Delta u_s-\frac 1\eps W'(u_s)\Big)\Big]\Big\|_{L^2}^2\,\rmd s \\ & \;\; \le 2\lambda_\eps \int_0^t\!\Big\|\psi_s \Big(\eps\Delta u_s-\frac 1\eps W'(u_s)\Big)\Big\|_{L^2}^2\,\rmd s \le  4\eps\lambda_\eps   \int_0^T\! \mc W_\eps(u_s)\, \rmd t \quad \mbox{(as $\|\psi\|_\infty\le 1$)}\;.
\end{split}
\]
By Proposition~\ref{prop:1}, Lemma~\ref{mm} with $\beta=0$, and the bound above,
\[
\lim_{\ell\to \infty}\sup_\psi \varlimsup_{\eps\to 0}\, \eps\lambda_\eps\, \log\bb P_\eps\Big(-N^\psi_T> \ell \Big) = -\infty\;.
\]
Arguing as in the proof of \eqref{rtnew} and recalling \eqref{2.5b}, for $\eps$ small enough we have,
\begin{equation}
\label{rtp}
\begin{split}
|R^\psi_T| & \le \big(\eps\lambda_\eps \|\nabla j_\eps\|_{L^2}^2\,T  + \eps^{-1}\lambda_\eps \|j_\eps\|_{L^2}^2\big) \Big(C + \frac12 \sup_{s \le T} \mc F_\eps(u_s)\Big) \\ & \le C + \frac12 \sup_{s \le T} \mc F_\eps(u_s)\;,
\end{split}
\end{equation}
where the constant $C$ does not depend on the choice of $\psi$ with $\|\psi\|_\infty\le 1$. Similarly, for such $\psi$,
\[
\frac 1\eps \int_0^T\!\int\!|\psi|\Big(\eps\Delta u - \frac 1\eps W'(u)\Big)^2 \,\rmd x\,\rmd t \le \int_0^T\! \mc W_\eps(u)\, \rmd t\;.
\]
Finally, $V_{\eps,T}^u(\psi_T) = V_{\eps,0}^u(\psi_0) =0$ as $\psi$ has compact support. Therefore, by \eqref{vva2b}, the proof is achieved gathering the above bounds and using Proposition~\ref{prop:1}.
\qed\end{proof}

\section{Large deviations upper bound}
\label{sec:4}

Recall the definitions of $\bs V$, $\bs U$, and $\bs H^{\bs s}$ in \eqref{V:=}, \eqref{U:=}, and \eqref{Hs:=}, and set 
\[
\bs Z:=\bs U\times\bs V\times\bs H^{-\bs s}\;,
\]
that we consider endowed with the product topology and the corresponding Borel $\sigma$-algebra. Note that, since $\bs U\times\bs H^{-\bs s}$ has a countable basis, then $\mc B(\bs Z) = \mc B(\bs U)\otimes \mc B(\bs V) \otimes \mc B(\bs H^{-\bs s})$, see \cite[Lemma 6.4.2]{Bogachev}. In this section we shall prove a large deviations upper bound for the family of probability measures on $\bs Z$ defined by $(\bb P_\eps \circ (Z_\eps^u)^{-1})$, where $Z_\eps^u := (u,V_\eps^u,J_\eps^u)$ is a Borel map according to Lemma \ref{mesura} and Theorem \ref{thm:sc}. Before stating the result we introduce the associated rate function. 

\begin{defin}
\label{def:d}
Let $\bs D$ be the subset of $\bs Z$ given by the collection of elements $Z= (u,V,J)$ such that:
\begin{itemize}
\item[a)] $(u,V) \in \bs\Gamma$ and $\tau^{-1}V$ is an $L^2$-flow.
\item[b)] The functional $J$ extends to a continuous linear functional on $L^2(V_t\,\rmd t;\bb R^d)$ satisfying $J(f) = \int_0^T\!\int\! f\cdot \nu \, \rmd V_t \,\rmd t$, $f \in L^2(V_t\,\rmd t;\bb R^d)$, where $\nu$ is a velocity of $\tau^{-1}V$.
\end{itemize}
\end{defin}
\begin{rem}
\label{rem:4}
The previous definition and the orthogonality condition \eqref{nut1} yield the inclusion $\bs L  \subset \mathrm{Ker} \, J $ for any $(u,V,J) \in \bs D$, where $\bs L \subset \bs H^{\bs s}$ is defined in Remark \ref{rem:2}. Note also that any functional $f\mapsto J(f)$ as in item b) of Definition \ref{def:d} with $\nu\in L^2(V_t\,\rmd t;\bb R^d)$ defines an element of $\bs H^{-\bs s}$.
\end{rem}

Given $\eta\in C^\infty([0,T]\times\bb T^d;\bb R^d)$, $\chi\in C^\infty_K([0,T)\times\bb T^d;\bb R^d)$, and $\psi\in C^\infty_K([0,T)\times\bb T^d;[0,1))$ such that $\sqrt{\psi(1-\psi)}\in C^\infty_K([0,T)\times\bb T^d)$, let $\mc I^{\eta,\chi,\psi}\colon \bs Z \to \bb R$ be the functional defined by
\begin{equation}
\label{mcI}
\begin{split}
& \mc I^{\eta,\chi,\psi}(u,V,J) := - \bar\mu_0(\psi_0) - J(\eta) - \int_0^T\! |V_t|(\partial_t\psi_t) \, \rmd t - J(\nabla\psi)\\ & + \int_0^T\! \delta V_t\big((2\psi_t-1)\eta_t+\chi_t\sqrt{\psi_t(1-\psi_t)}\big)\,\rmd t - \int_0^T\! |V_t|\Big(|\eta_t|^2+ \frac{|\chi_t|^2}4\Big) \, \rmd t \;,
\end{split}
\end{equation}
with $\bar \mu_0$ as in item c) of Assumption~\ref{t:au0}. Notice that, by definition of bounded weak* topology on $\bs V$, for each $\eta$, $\chi$ and $\psi$ the functions $Z \mapsto \mc I^{\eta,\chi,\psi}(Z)$ is continuous because it is sequentially continuous along weak* convergent sequences $V_n\to V$ and $\bs H^{-\bs s}$-convergent sequences $J_n \to J$.

Set 
\begin{equation}
\label{mci}
\mc I(Z) = \begin{cases} \sup_{\eta,\chi,\psi} \mc I^{\eta,\chi,\psi}(Z) & \text{ if } Z \in \bs D, \\ +\infty & \text{ otherwise,} \end{cases}
\end{equation}
where the supremum is carried out over $\eta\in C^\infty([0,T]\times\bb T^d;\bb R^d)$, $\chi\in C^\infty_K([0,T)\times\bb T^d;\bb R^d)$, and $\psi\in C^\infty_K([0,T)\times\bb T^d;[0,1))$ such that $\sqrt{\psi(1-\psi)}\in C^\infty_K([0,T)\times\bb T^d)$.
 
\begin{theorem}
\label{thm:in}
For each closed $\bs C\subset \bs Z$,
\[
\varlimsup_{\eps\to 0} \eps\lambda_\eps \log\bb P_\eps\big( Z_\eps^u\in \bs C\big) \le -\inf_{\bs C} \mc I \;.
\]
Moreover, $\mc I$ has compact sub-level sets. 
\end{theorem}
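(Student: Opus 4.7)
The plan is the standard scheme for stochastic PDE large deviations, reinforced by the preparatory super-exponential estimates of Section~\ref{sec:3}: for every admissible test triple $(\eta,\chi,\psi)$ produce an exponential supermartingale whose exponent is $(\eps\lambda_\eps)^{-1}\mc I^{\eta,\chi,\psi}(Z_\eps^u)$ up to vanishing errors, deduce via Chebyshev a bound with rate $\mc I^{\eta,\chi,\psi}$, then conclude by a minimax/covering argument after reducing to a compact set via exponential tightness.

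\textbf{Step 1: exponential tightness.} Collect Propositions~\ref{prop:1}, \ref{t:omep}, \ref{t:ene} and Lemma~\ref{lem:J3} to show that for every $M>0$ there exists $\bs\ell$ and $R>0$ such that
\[
\varlimsup_{\eps\to 0} \eps\lambda_\eps\log\bb P_\eps\Big(Z_\eps^u\notin K_M\Big)\le -M,\qquad K_M:=\bs\Gamma_{\bs\ell}\times\overline{B_R^{\bs H^{-\bs s}}}.
\]
The set $K_M$ is compact: $\bs\Gamma_{\bs\ell}$ is compact by Theorem~\ref{t:ap}, while the closed ball in $\bs H^{-\bs s}$ is compact in $\bs H^{-\bs s'}$ for $\bs s'$ slightly larger than $\bs s$, and one may choose $\bs s$ at the outset with a bit of slack. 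The countable-dense testing against $\{\phi_j\}$ in Proposition~\ref{t:ene} is what matches condition d) of Definition~\ref{def:2}.

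\textbf{Step 2: exponential supermartingale.} This is the main technical step. Fix $(\eta,\chi,\psi)$ and apply It\^o's formula to $|V_{\eps,t}^u|(\psi_t)$ as in \eqref{vva2b}, together with the decomposition $J_\eps^u(f)=A^f_T+N^f_T$ from Lemma~\ref{lem:J1}. The key identity, valid modulo terms of order $\int_0^T\|\xi^u_{\eps,t}\|_{TV}\,\rmd t$, is
\[
\delta V_{\eps,t}^u(\eta)=-\int\!\eta\cdot\nabla u\,\Big(\eps\Delta u-\tfrac1\eps W'(u)\Big)\,\rmd x + \mathrm{err}(\xi^u_\eps),
\]
so that the finite-variation part of the It\^o expansion reconstructs precisely the linear terms of $\mc I^{\eta,\chi,\psi}$, the discrepancy error being super-exponentially negligible by Lemma~\ref{prop:2}. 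Drive the noise by a test field of the form $h_t=(2\psi_t-1)\eta_t+\chi_t\sqrt{\psi_t(1-\psi_t)}$: its quadratic variation produces exactly $\int_0^T|V_t|(|\eta|^2+|\chi|^2/4)\,\rmd t$ up to discrepancy errors, so the resulting continuous martingale $N^{h,\psi}$ obeys
\[
\bb E_\eps\,\exp\Big\{(\eps\lambda_\eps)^{-1}\big[N^{h,\psi}_T-\tfrac12(\eps\lambda_\eps)^{-1}[N^{h,\psi}]_T\big]\Big\}\le 1,
\]
while the It\^o correction terms carry the prefactors $\eps\lambda_\eps\|\nabla j_\eps\|_{L^2}^2$ and $\eps^{-1}\lambda_\eps\|j_\eps\|_{L^2}^2$, hence are $o(1)$ by \eqref{2.5b}. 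Combining these ingredients yields
\[
\bb E_\eps\exp\{(\eps\lambda_\eps)^{-1}\mc I^{\eta,\chi,\psi}(Z_\eps^u)\}\le \exp\{o(1/(\eps\lambda_\eps))\},
\]
and exponential Chebyshev gives $\varlimsup_\eps\eps\lambda_\eps\log\bb P_\eps(\mc I^{\eta,\chi,\psi}(Z_\eps^u)\ge a)\le -a$ for every $a\in\bb R$.

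\textbf{Step 3: minimax and compactness of sub-level sets.} For closed $\bs C$ and fixed $M$, the compact set $\bs C\cap K_M$ is covered, for each $a<\inf_{\bs C\cap K_M}\mc I$, by open neighborhoods on which some $\mc I^{\eta,\chi,\psi}>a$ (use continuity of $Z\mapsto\mc I^{\eta,\chi,\psi}(Z)$ noted after \eqref{mcI} and the pointwise supremum definition of $\mc I$). A finite subcover, the union bound \eqref{bbb}, and Step~2 yield $\varlimsup_\eps\eps\lambda_\eps\log\bb P_\eps(Z_\eps^u\in\bs C\cap K_M)\le -a$; letting $a\uparrow\inf_{\bs C\cap K_M}\mc I$, then $M\to\infty$, and combining with Step~1 concludes \eqref{rigetto}. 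Lower semicontinuity of $\mc I$ follows from its definition as a supremum of continuous functionals, while coercivity is obtained by checking that $\mc I(Z)\le M$ forces $Z\in\bs\Gamma_{\bs\ell}$ for some $\bs\ell=\bs\ell(M)$ (via Theorem~\ref{t:ap} applied to the admissibility condition in Definition~\ref{def:d}) and, using the representation $J(f)=\int\!\!\int f\cdot\nu\,\rmd V_t\,\rmd t$ together with the $L^2$-bound on $\nu$ provided by $\mc I_{\mathrm{ac}}$ and the $L^2$-bound on $H$ from Theorem~\ref{t:ap}~b.2), a uniform bound on $\|J\|_{\bs H^{-\bs s}}$. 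Thus sub-level sets are closed subsets of compact sets, hence compact.

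\textbf{Main obstacle.} The delicate point is Step~2: calibrating the driving field $h$ so that the quadratic variation of the associated martingale matches \emph{exactly} the quadratic penalty $\int|V_t|(|\eta|^2+|\chi|^2/4)\,\rmd t$ appearing in $\mc I^{\eta,\chi,\psi}$, and simultaneously ensuring that all errors---discrepancy residuals from the identification $\delta V_\eps^u\leftrightarrow \nabla u\cdot(\eps\Delta u-\eps^{-1}W'(u))$, noise-regularization corrections, boundary It\^o contributions---are super-exponentially small. The specific weight $\sqrt{\psi(1-\psi)}$ is engineered precisely to make this reconstruction work while enforcing the compact-support localization needed to control nucleation, and verifying that the minimax swap is valid (i.e., that the pointwise sup $\mc I$ of continuous functionals really is the relevant rate on compacts) is the second nontrivial analytic input.
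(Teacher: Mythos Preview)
Your outline follows the paper's overall architecture, but Step~2 misidentifies the mechanism that produces the $\chi$-term, and this is a genuine gap. In the paper the exponential martingale is built \emph{only} from $\eta$ and $\psi$: it is $N^{\eta,\psi}=N^{1,\eta}+N^{2,\psi}$ with integrand $\eps\nabla u\cdot\eta-\psi\big(\eps\Delta u-\eps^{-1}W'(u)\big)$, not the field $h=(2\psi-1)\eta+\chi\sqrt{\psi(1-\psi)}$ you propose. The quadratic variation of $N^{\eta,\psi}$ does \emph{not} produce $\int|V_t|(|\eta|^2+|\chi|^2/4)\,\rmd t$; instead it leaves the residual positive term $\mc R^{\chi,\psi}=\frac1\eps\int\!\!\int\psi(1-\psi)\big(\eps\Delta u-\eps^{-1}W'(u)\big)^2\,\rmd x\,\rmd t+\ldots$ (see \eqref{nep} and the display after \eqref{pti}). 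The field $\chi$ enters only afterwards, as a \emph{variational parameter} in the elementary pointwise bound $\psi(1-\psi)X^2\ge \lambda\sqrt{\psi(1-\psi)}X-\lambda^2/4$ applied with $\lambda=\eps\nabla u\cdot\chi$, which converts the leftover Willmore-type term into the first-variation and $|\chi|^2/4$ pieces of $\mc I^{\eta,\chi,\psi}$. Without this step you cannot reconstruct $\mc I^{\eta,\chi,\psi}$ from the supermartingale, and your claimed identity $\bb E_\eps\exp\{(\eps\lambda_\eps)^{-1}\mc I^{\eta,\chi,\psi}(Z_\eps^u)\}\le e^{o(1/\eps\lambda_\eps)}$ does not follow.

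There is a second gap in Steps~1 and~3 concerning the constraint $Z\in\bs D$ and goodness. The functional $\mc I$ is \emph{not} a supremum of continuous functionals on all of $\bs Z$; it equals that supremum only on $\bs D$ and is $+\infty$ off it. Since $\bs D$ is not closed, lower semicontinuity does not follow from your argument. The paper handles this by introducing the intermediate sets $\bs D_{\bs\ell,\mc A,m,N}$ (built from the neighborhoods $\mc A\in\mc N(\bs\Gamma_{\bs\ell})$ and the constraints $\bs F_{m,N},\bs G_{m,N}$) and the truncated functionals $\mc I^{\eta,\chi,\psi}_{\bs\ell,\mc A,m,N}\wedge a_{\bs\ell,m}$, which \emph{are} lower semicontinuous; the super-exponential estimates of Proposition~\ref{lem:zd} (which in turn use Lemmata~\ref{lem:J1} and~\ref{lem:J2}) are what force membership in $\bs D$ in the limit. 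Relatedly, your exponential tightness set $K_M=\bs\Gamma_{\bs\ell}\times\overline{B_R}$ is not usable as stated: $\bs\Gamma_{\bs\ell}$ consists of \emph{limit} pairs, so $(u,V_\eps^u)$ never belongs to it for $\eps>0$; the paper works instead with $\eps$-dependent sets $\mc K_{\bs\ell,\eps}$ and neighborhoods of $\bs\Gamma_{\bs\ell}$.
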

We first show that the above statement implies the large deviations upper bound.

\medskip
\noindent\textit{Proof of Theorem \ref{thm:1}} \hspace{2truept} 
By the contraction principle, see, e.g., \cite[Thm.~4.2.1]{DZ}, Theorem \ref{thm:in} implies the large deviations upper bound for the family $(\bb P_\eps \circ (u,V^u_\eps)^{-1})$ with good rate function 
\[
\bar I(u,V) = \inf\{\mc I(u,V,J)\;,\; J \in \bs H^{-\bs s}\}\;.
\]
It remains to show that $\bar I = I$ with $I$ as in \eqref{I=}. In view of \eqref{mci} and Definition \ref{def:d}, if $(u,V) \notin \Gamma$ or $\tau^{-1}V$ is not an $L^2$-flow then $\bar I(u,V) = I(u,V) = +\infty$ and the equality holds. Otherwise, $(u,V)\in \Gamma$, $\tau^{-1}V$ is an $L^2$-flow, and when computing $\bar I(u,V)$ we can assume $Z=(u,V,J) \in \bs D$, i.e., $J$ is given in terms of a velocity $\nu$ of $\tau^{-1}V$ as in item b) of Definition \ref{def:d}. For such $Z$ we have, 
\[
\begin{split}
& \mc I^{\eta,\chi,\psi}(Z) = - \int_0^T\! |V_t|(\eta_t \cdot \nu_t) \,\rmd t - \bar\mu_0(\psi_0) - \int_0^T\! |V_t|(\partial_t\psi_t+\nu_t\cdot\nabla\psi_t) \, \rmd t \\ & \!\! - \int_0^T\! |V_t| \big((2\psi_t-1)\eta_t\cdot H_t+\chi_t\cdot H_t\sqrt{\psi_t(1-\psi_t)} \big) \, \rmd t - \int_0^T\! |V_t| \Big(|\eta_t|^2+ \frac{|\chi_t|^2}4\Big) \rmd t \;.
\end{split}
\]
Since $C^\infty_K([0,T)\times\bb T^d;\bb R^d)$ is dense in $L^2([0,T]\times \bb T^d,|V_t|\,\rmd t;\bb R^d)$, the supremum $\sup_{\eta,\chi} \mc I^{\eta,\chi,\psi}(Z)$ is equal to the critical value $\mc I^{\eta_*,\chi_*,\psi}(Z)$, with $\eta_* = \frac 12(H-\nu) -\psi H$ and $\chi_*= - 2 \sqrt{\psi(1-\psi)} H$. A straightforward computation yields,
\[
\begin{split}
& \mc I^{\eta_*,\chi_*,\psi}(Z) = \frac 14 \int_0^T\! |V_t|\big(\big|\nu_t- H_t\big|^2\big) \,\rmd t \\ & \qquad \qquad\qquad - \bar\mu_0(\psi_0) - \int_0^T\! |V_t|\big(\partial_t\psi_t+\nu_t\cdot\nabla\psi_t - \nu_t\cdot H_t\psi_t\big) \, \rmd t\;.\end{split}
\]
Recalling \eqref{Iac}, \eqref{Ising}, and \eqref{I=} we thus conclude that if $(u,V)\in \Gamma$ and $\tau^{-1}V$ is an $L^2$-flow then, taking the infimum over $J$ (i.e., in view of Remark \ref{rem:4}, over all the possible velocities $\nu$ of $V$) we have,
\[
\bar I(u,V) = \inf_\nu \sup_\psi \mc I^{\eta_*,\chi_*,\psi}(Z) = \inf_\nu\{I_\mathrm{ac}(V,\nu) + I_\mathrm{sing}(V,\nu)\} = I(u,V)\;,
\]
which complete the proof.
\qed

\subsection{A priori bounds}
\label{subsec:4.1}

Fix a countable set $(f^{(k)})_{k\in \bb N}\subset C^\infty([0,T]\times\bb T^d\times \Lambda_{d-1};\bb R^d)$ dense in $\bs H^{\bs s}$. Given $N\in \bb N$ and $m>0$ let 
\[
\bs F_{m,N} := \Big\{ (V,J)\in \bs V\times\bs H^{-\bs s}\colon \max_{1\le k\le N}\Big[ J(f^{(k)}) -\frac 12\int_0^T\! V_t\big(|f^{(k)}_t|^2\big)\, \rmd t\Big] \le m\Big\}\;.
\]
Similarly, fix a countable set $(\psi^{(k)})_{k\in \bb N}$, with $\psi^{(k)} \in C^\infty_K((0,T)\times\bb T^d)$ such that $\|\psi^{(k)}\|_\infty\le 1$, dense in the unit ball of $C_0((0,T)\times\bb T^d)$, and let 
\[
\bs G_{m,N} := \Big\{ (V,J)\in \bs V\times\bs H^{-\bs s}\colon \max_{1\le k\le N} \Big[ J(\nabla \psi^{(k)}) +\int_0^T\! |V_t|(\partial_t \psi^{(k)}_t)\, \rmd t\Big] \le m\Big\}\;.
\]

Fix $\bs\ell\in\bb R^3_+$, recall Definition~\ref{def:2} of the set $\bs \Gamma_{\bs \ell}$ and denote by $\mc N(\bs\Gamma_{\bs\ell})$ the collection of the open neighborhoods of $\bs \Gamma_{\bs\ell}$. For $\mc A\in \mc N(\bs\Gamma_{\bs\ell})$, $m>0$, $N\in \bb N$, and recalling Remark~\ref{rem:2}, we set
\[
\begin{split}
\bs D_{\bs\ell,\mc A, m,N} := \Big\{ & (u,V,J) \in \bs Z \;:\;\; (u,V)\in \mc A\;, \\ &  (V,J)\in \bs F_{m,N} \cap \bs G_{m,N}\;,\;\; J(f)=0\;\; \forall\,f\in\bs L \Big\}\;.
\end{split}
\]

Recall the definition \eqref{dm} of the discrepancy measure $\xi^u_\eps$. The super-exponen\-tial  probability estimates in Propositions \ref{prop:1}, \ref{t:omep}, \ref{t:ene}, and Lemmata \ref{lem:J1}, \ref{lem:J2}, together with the deterministic bound in Lemma~\ref{prop:2} yields the following statement. 
 
\begin{prop}
\label{lem:zd}
For each $\zeta>0$, 
  \begin{equation*}
    \lim_{\eps\to 0}\, \eps\lambda_\eps\, 
    \log \bb P_\eps\Big(
    \int_0^T \!\big\| \xi^u_{\eps,t}\big\|_\mathrm{TV} \, 
    \rmd t > \zeta \Big) = -\infty\;.
  \end{equation*}
Moreover, 
\[
\lim_{\substack{\bs\ell\to\infty\\ m\to +\infty}} \sup_{\substack{N\in\bb N \\ \mc A\in \mc N(\bs\Gamma_{\bs\ell})}} \varlimsup_{\eps\to 0}\, \eps\lambda_\eps\, \log\bb P_\eps \big(Z_\eps^u \notin \bs D_{\bs\ell,\mc A, m,N} \big) = -\infty\;,
\]
where by $\bs\ell\to\infty$ we mean $\ell_i\to+\infty$, $i=1,2,3$. 
\end{prop}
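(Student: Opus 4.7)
The plan is to assemble the super-exponential probability estimates from Section \ref{sec:3} with the deterministic Lemma \ref{prop:2} and the compactness statement of Theorem \ref{t:ap}; the two parts of the proposition are largely independent and rely on different collections of the previous results.

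For the first part, I would first upgrade Lemma \ref{prop:2} to a uniform-in-$u$ version by contradiction: for each $\ell, \zeta > 0$ there is $\eps_1 = \eps_1(\ell, \zeta)$ such that every $u \in C([0,T]; H^1) \cap L^2([0,T]; H^2)$ with $\sup_t \mc F_\eps(u_t) + \int_0^T \mc W_\eps(u_t)\,\rmd t \le \ell$ and $\eps \le \eps_1$ satisfies $\int_0^T \|\xi^u_{\eps,t}\|_\mathrm{TV}\,\rmd t \le \zeta$, otherwise a violating sequence would contradict Lemma \ref{prop:2}. With this at hand, the event $\{\int_0^T \|\xi^u_{\eps,t}\|_\mathrm{TV}\,\rmd t > \zeta\}$ is contained in the complement of the energy event, which by Proposition \ref{prop:1} has $\bb P_\eps$-probability super-exponentially small with rate $\ell/20$. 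Letting $\ell \to \infty$ yields the first statement.

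For the second statement, I would decompose
\[
\{Z_\eps^u \notin \bs D_{\bs\ell, \mc A, m, N}\} \subseteq \{(u, V_\eps^u) \notin \mc A\} \cup \{(V_\eps^u, J_\eps^u) \notin \bs F_{m, N}\} \cup \{(V_\eps^u, J_\eps^u) \notin \bs G_{m, N}\}\,,
\]
using that $J_\eps^u$ vanishes on $\bs L$ pointwise by Remark \ref{rem:2}. The second and third events are dispatched by a union bound over the $N$ test functions combined with Lemma \ref{lem:J1} and Lemma \ref{lem:J2} respectively: for fixed $m$ one obtains $\varlimsup_\eps \eps\lambda_\eps \log \bb P_\eps(\cdot) \le -g(m)$ with $g(m) \to \infty$, uniformly in $N$ since $\eps\lambda_\eps \log N \to 0$. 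To handle the first event, I would first show that with super-exponentially high probability $(u, V_\eps^u)$ itself meets conditions b), c), d) of Definition \ref{def:2} with parameters controlled by $\bs\ell$: condition b) is precisely Proposition \ref{prop:1}, while for conditions c) and d) a dyadic chaining at scales $\delta_n = 2^{-n}\delta_0$ transforms the pointwise-in-$\delta$ bounds of Propositions \ref{t:omep} and \ref{t:ene} into uniform H\"older bounds. The crucial point is that the gaps $\alpha_2 < \frac{1}{4d}$ and $\alpha_3 < \frac 12$ allow applying those propositions with slightly larger exponents $\gamma$, making the resulting geometric series convergent; for $\delta$ bounded below by $\delta_0$ the bound on $\omega^\infty(u;\delta)$ is automatic because $\sup_t\|u_t\|_{L^1}$ is controlled by $\mc F_\eps$ on the energy event. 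For condition d), an extra union bound over the $\lfloor (\eps\lambda_\eps)^{-1}\rfloor$ functions $\phi_j$ is required, but the correction $\eps\lambda_\eps \log\lfloor(\eps\lambda_\eps)^{-1}\rfloor \to 0$ makes it harmless.

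It remains to bridge from ``$(u, V_\eps^u)$ meets b), c), d) with parameters $\bs\ell$'' to ``$(u, V_\eps^u) \in \mc A$'', and for this I would argue by contradiction. If the implication were to fail for some $\bs\ell, \mc A$, there would be sequences $\eps_n \to 0$ and $u^n$ obeying b), c), d) with $(u^n, V_{\eps_n}^{u^n}) \notin \mc A$; the compactness of $\bs\Gamma_{\bs\ell}$ in $\bs U \times \bs V$ from Theorem \ref{t:ap}, combined with the very Definition \ref{def:2} of $\bs\Gamma_{\bs\ell}$ as set of limit points, would yield a cluster point lying in $\bs\Gamma_{\bs\ell} \subset \mc A$ and hence contradict the closedness of $\mc A^c$. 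This provides an $\eps$-threshold $\eps_2(\bs\ell, \mc A)$ below which the deterministic containment holds. The main obstacle I foresee is the dyadic chaining for condition d), where the growing union bound must be balanced against the super-exponential decay rate in Proposition \ref{t:ene}; the remainder is bookkeeping based on \eqref{bbb} and care with the order of quantifiers in the final $\lim_{\bs\ell, m}\sup_{N, \mc A}\varlimsup_\eps$.
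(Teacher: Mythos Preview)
Your proposal is correct and follows essentially the same route as the paper: the same inclusion argument for the discrepancy part, the same four-fold decomposition of $\{Z_\eps^u\notin\bs D_{\bs\ell,\mc A,m,N}\}$, the same use of Lemmata~\ref{lem:J1}--\ref{lem:J2} for $\bs F_{m,N}$ and $\bs G_{m,N}$, and the same dyadic chaining with an exponent gap for conditions c) and d) followed by a contradiction/compactness argument for the event $\{(u,V_\eps^u)\notin\mc A\}$. The only cosmetic difference is that the paper packages the latter step by introducing explicit sets $\mc K_{\bs\ell,\eps}^{\mathrm a},\ldots,\mc K_{\bs\ell,\eps}^{\mathrm d}$ and handles the large-$\delta$ regime via the subadditivity $\omega^\infty(\cdot;\delta)\le 2\delta\bar\delta^{-1}\omega^\infty(\cdot;\bar\delta)$ rather than your $L^1$ bound from the energy, but the substance is identical.
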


\begin{proof}
To prove the first bound we observe that, in view of Lemma \ref{prop:2}, for each $\zeta>0$ and $\ell>0$ there exists $\eps_0$ such that for any $0<\eps<\eps_0$ we have the inclusion
 \[ \Big\{  u \colon \int_0^T \!\big\| \xi^u_{\eps,t}\big\|_\mathrm{TV} \, 
    \rmd t > \zeta \Big\} \subset  \Big\{  u \colon \sup_{t \in [0,T]} \mc F_\eps(u_t) +\int_0^T \mc W_\eps(u_s) ds >\ell \Big\} \, . \]
  Taking the limit $\eps \to 0$ and then $\ell \to \infty$, by Proposition \ref{prop:1} the first bound follows.
  
  To prove the second bound we first write,
  \[
  \begin{split} \Big\{ (u,V,J) \notin \bs D_{\bs\ell,\mc A, m,N} \Big\} & = \Big\{ (u,V) \notin \mc A \Big\} \cup  \Big\{ (V,J) \notin \bs F_{m,N} \Big\}\\ & \quad \cup  \Big\{ (V,J) \notin \bs G_{m,N} \Big\} 
  \cup \Big\{ \bs L \not\subset \mathrm{Ker} J\Big\}\; .
  \end{split}
  \]
Clearly,  by Remark \ref{rem:2} we have $\bb P_\eps (u \colon  \bs L \not\subset \mathrm{Ker} J^u_\eps )=0$. Moreover, by Lemmata \ref{lem:J1} and \ref{lem:J2} we easily deduce,
 \[
\lim_{ m\to +\infty} \sup_{N\in\bb N } \, \varlimsup_{\eps\to 0}\, \eps\lambda_\eps\, \log\bb P_\eps \big( (V_\eps^u,J_\eps^{u}) \notin \bs F_{ m,N} \big) = -\infty\;,
\]
and
\[
\lim_{ m\to +\infty} \sup_{N\in\bb N } \, \varlimsup_{\eps\to 0}\, \eps\lambda_\eps\, \log\bb P_\eps \big( (V_\eps^u,J_\eps^{u}) \notin \bs G_{ m,N} \big) = -\infty\;.
\]
In view of the previous bounds in order to conclude the proof it suffices to show that
\begin{equation}
\label{lunga}
\lim_{\bs\ell\to\infty} \sup_{ \mc A\in \mc N(\bs\Gamma_{\bs\ell})} \varlimsup_{\eps\to 0}\, \eps\lambda_\eps\, \log\bb P_\eps \big(
(u,V_\eps^u) \notin \mc A \big) = -\infty\;.
\end{equation}
Recalling Definition \ref{def:2}, for each fixed $\bs \ell \in \bb R^3_+$ and $\eps \in (0,1)$ we set
\[  \mc K_{\bs \ell,\eps} =  \bigcap_{\beta \in \{\mathrm{a,b,c,d} \}} \mc K_{\bs \ell,\eps}^{\beta} \, , \]
where, setting  $\bar{\delta}_k=\bar{\delta} 2^{-k}$ with $\bar{\delta}=\delta_0 \wedge \delta_1$, $\delta_0$ and $\delta_1$ as in Propositions \ref{t:omep} and \ref{t:ene}, 
\[ 
\begin{split}
\mc K_{\bs \ell,\eps}^{\mathrm a}&= \Big\{ u \in C([0,T];L^2) \colon  u_0=\bar u_0^\eps \Big\} \; ,\\
\mc K_{\bs \ell,\eps}^{\mathrm b}&= \Big\{ u \in C([0,T];H^1)\cap L^2([0,T];H^2)  \colon \\ & \qquad \qquad \sup_{t\in [0,T ]} \mc F_\eps (u_t) +\int_0^T \! \mc W_\eps(u_t) \, \rmd t  \leq \ell_1 \Big\} \; , \\
\mc K_{\bs \ell,\eps}^{\mathrm c}&= \bigcap_k \Big\{ u \in C([0,T];L^2)  \colon  \omega^\infty(u; \bar{\delta}_k) \leq  \frac{ \ell_2 {\bar \delta}^{1-\alpha_2}}{4 T^{1-\alpha_2}} \bar{\delta}_k^{\alpha_2}\Big\} \; , \\
\mc K_{\bs \ell,\eps}^{\mathrm d}&= \bigcap_{j=1}^{ \lfloor (\eps \lambda_\eps)^{-1} \rfloor } \bigcap_k \Big\{ u \in C([0,T];H^1)  \colon  \omega^1(|V^{u}_\eps|(\widehat\phi_j); \bar{\delta}_k) \leq \frac{\ell_3 \bar{\delta}^{1-\alpha_3}}{4 T^{1-\alpha_3}} \bar{\delta}_k^{\alpha_3} \Big\}\;,
\end{split}
\]
where $\widehat\phi_j :=\phi_j/\|\phi_j\|_{C^1}$. We claim that if there exist $ u^{\eps_n} \in \mc K_{\bs \ell, \eps_n}$ for a sequence $\eps_n \downarrow 0$, then there exists $(u, V) \in \bs \Gamma_{\bs \ell}$ such that, up to subsequences, $(u,V)=\lim_n (u^{\eps_n},V_{\eps_n}^{u^{\eps_n}})$ and $(u^{\eps_n})$ satisfies conditions a)-d) in Definition \ref{def:2}. It is straightforward to check that conditions a)-d) in Definition \ref{def:2} hold for the whole sequence $(u^{\eps_n})$. Indeed, a) and b) are trivial; moreover if $u^\eps \in \mc K^c_{\bs \ell,\eps}$ then $\omega^\infty(u^\eps; \delta) \leq \frac{ \ell_2{\bar \delta}^{1-\alpha_2}}{2 T^{1-\alpha_2}} \delta^{\alpha_2}$ for $0<\delta \leq \bar{\delta}$ since $\omega^\infty(\cdot;2\delta)\le 2\omega^\infty(\cdot;\delta)$ for $2\delta\le\bar\delta$;  whence c) follows easily, as $\omega^\infty(\cdot;\delta) \leq 2 \delta \bar \delta^{-1} \omega^\infty(\cdot;\bar \delta)$ for any $\bar \delta \leq \delta \leq T$. Similarly, since $\omega^1(\cdot;2\delta)\le 2\omega^1(\cdot;\delta)$ for $2\delta\le\bar\delta$, if $u^\eps \in \mc K^d_{\bs \ell,\eps}$ then $\omega^1(|V^{u}_\eps|(\widehat \phi_j); \delta) \leq \frac{\ell_3 \bar{\delta}^{1-\alpha_3}}{2 T^{1-\alpha_3}} \delta^{\alpha_3}$ for $0<\delta \leq \bar{\delta}$; whence  d) holds as $\omega^1(\cdot;\delta) \leq 2 \delta \bar \delta^{-1}  \omega^1(\cdot;\bar \delta)$ for any $\bar \delta \leq \delta \leq T$. Finally, arguing as in the proof of Theorem \ref{t:ap}, we deduce the pre-compactness of the sequence $(u^{\eps_n},V_{\eps_n}^{u^{\eps_n}})$ and the claim follows.

As a consequence of the previous claim,  for each fixed $\bs \ell \in \bb R^3_+$ and for each $\mc A \in \mc N(\bs \Gamma_{\bs \ell})$ we have $ \{  u \colon (u,V_\eps^u) \not \in \mc A \} \cap \mc K_{\bs \ell,\eps} =\emptyset$ for any $\eps$ small enough. Hence, as $\bb P_\eps \big(
\mc K^a_{\bs \ell, \eps} \big)=1$, 
 \begin{equation}
\label{lunga2}
\begin{split}
 \sup_{ \mc A\in \mc N(\bs\Gamma_{\bs\ell})} \varlimsup_{\eps\to 0}\, \eps\lambda_\eps\, \log\bb P_\eps \big(
(u,V_\eps^u) \notin \mc A \big) & \leq  \varlimsup_{\eps\to 0}\, \eps\lambda_\eps\, \log\bb P_\eps \big(
u \notin \mc K_{\bs \ell, \eps} \big) \\
& \leq \bigvee_{\beta \in \{ \mathrm{ b ,c ,d} \}}  \varlimsup_{\eps\to 0}\, \eps\lambda_\eps\, \log\bb P_\eps \big(
u \notin \mc K^\beta_{\bs \ell, \eps} \big) \; ,
\end{split}
\end{equation}
and it remains to estimate the probabilities on the right-hand side. By Proposition \ref{prop:1} we have
\begin{equation}
\label{lunga3}
\lim_{\bs \ell \to \infty}  \varlimsup_{\eps\to 0}\, \eps\lambda_\eps\, \log\bb P_\eps \big(
u \notin \mc K^{\mathrm b}_{\bs \ell, \eps} \big)=-\infty \; .
\end{equation}
Given $\alpha_2 \in (0, \frac{1}{4d})$ we pick $\gamma \in (\alpha_2,\frac{1}{4d})$ and observe that for $\ell_2\geq 1$ large enough we have 
\[ 
\zeta_k \bar{\delta}_k^{-\gamma}:= \frac{ \ell_2 {\bar \delta}^{1-\alpha_2}}{4 T^{1-\alpha_2}} \bar{\delta}_k^{\alpha_2-\gamma}  \geq \frac{ \ell_2 {\bar \delta}^{1-\alpha_2}}{4 T^{1-\alpha_2}} \bar{\delta}^{\alpha_2-\gamma} \geq C_0 \ell_0\;,
\]
with $C_0$ and $\ell_0$ as in Proposition \ref{t:omep}. By applying this proposition we get, for $\eps \lambda_\eps \leq 1$ and $\eps \leq \eps_0$,
\[
\begin{split}
\bb P_\eps  \big(
u \notin & \mc K^{\mathrm c}_{\bs \ell, \eps} \big)  \le \sum_{k \geq 0}  \bb P_\eps  \big(\omega^\infty (u;\bar{\delta}_k) > \zeta_k \big) \leq \sum_{k \geq 0} \rme^{- (\eps \lambda_\eps)^{-1} \zeta_k (C_0 \bar{\delta}_k^{\gamma})^{-1}} \\
 & = \sum_{k \geq 0} \rme^{- C \ell_2 (\eps \lambda_\eps)^{-1} \bar{\delta}_k^{\alpha_2-\gamma}} \leq \rme^{- C \ell_2(\eps \lambda_\eps)^{-1}  \bar{\delta}^{\alpha_2 -\gamma}}
  \sum_{k \geq 0} \rme^{- C  \big(  \bar{\delta}_k^{\alpha_2 -\gamma} -\bar{\delta}^{\alpha_2 -\gamma}\big) }
\\
 & \le \bar C \rme^{- C \ell_2(\eps \lambda_\eps)^{-1}  \bar{\delta}^{\alpha_2 -\gamma}}\;,
\end{split}
\]
for positive constants $C$ and $\bar{C}$ independent of $\eps \lambda_\eps \in (0,1)$, so that
 \begin{equation}
\label{lunga4}
\lim_{\bs \ell \to \infty}  \varlimsup_{\eps\to 0}\, \eps\lambda_\eps\, \log\bb P_\eps \big(
u \notin \mc K^{\mathrm c}_{\bs \ell, \eps} \big)=-\infty \; .
\end{equation}
 
Given $\alpha_3 \in (0, \frac 12)$ we pick $\alpha \in (\alpha_3,\frac 12)$ and observe that for $\ell_3\geq 1$ large enough we have, 
 \[ \zeta_k \bar{\delta}_k^{-\alpha}:= \frac{ \ell_3 {\bar \delta}^{1-\alpha_3}}{4 T^{1-\alpha_3}} \bar{\delta}_k^{\alpha_3-\alpha}  \geq \frac{ \ell_3 {\bar \delta}^{1-\alpha_3}}{4 T^{1-\alpha_3}} \bar{\delta}^{\alpha_3-\alpha} \geq C_2 \ell_0 \; ,\]
 with $C_2$ and $\ell_0$ as in Proposition \ref{t:ene}. By applying this proposition we get, for $\eps \lambda_\eps \leq 1$ and $\eps \leq \eps_2$,
 \[
 \begin{split}
 \bb P_\eps  \big(
u \notin & \mc K^{\mathrm d}_{\bs \ell, \eps} \big)  \le  \sum_{j=1}^{\lfloor (\eps \lambda_\eps)^{-1} \rfloor}\sum_{k \geq 0}  \bb P_\eps  \big(
 \omega^1 (|V^u_\eps|(\phi_j);\bar{\delta}_k) > \zeta_k \big)  \\
  & \leq  (\eps \lambda_\eps)^{-1}\sum_{k \geq 0} \rme^{- (\eps \lambda_\eps)^{-1}\frac{\zeta_k}{C_2 \bar{\delta}_k^{\alpha}}} = (\eps \lambda_\eps)^{-1}\sum_{k \geq 0} \rme^{- C \ell_3 (\eps \lambda_\eps)^{-1} \bar{\delta}_k^{\alpha_3-\alpha}} \\
  & \leq (\eps \lambda_\eps)^{-1} \rme^{- C \ell_3(\eps \lambda_\eps)^{-1}  \bar{\delta}^{\alpha_3 -\alpha}}
  \sum_{k \geq 0} \rme^{- C  \big(  \bar{\delta}_k^{\alpha_3 -\alpha} -\bar{\delta}^{\alpha_3 -\alpha}\big) }
\\
 & \le \bar C (\eps \lambda_\eps)^{-1}\rme^{- C \ell_3(\eps \lambda_\eps)^{-1}  \bar{\delta}^{\alpha_3 -\alpha}} \; ,
 \end{split}
 \]
 for positive constants $C$ and $\bar{C}$ independent of $\eps \lambda_\eps \in (0,1)$, so that
 \begin{equation}
\label{lunga5}
\lim_{\bs \ell \to \infty}  \varlimsup_{\eps\to 0}\, \eps\lambda_\eps\, \log\bb P_\eps \big(
u \notin \mc K^{\mathrm d}_{\bs \ell, \eps} \big)=-\infty \; .
\end{equation}
Gathering together \eqref{lunga2}-\eqref{lunga5} the bound \eqref{lunga} follows.
\qed\end{proof}

\subsection{Exponential martingales}

The upper bound will be achieved by a suitable exponential tilt of the probability $\bb P_\eps$. This tilt is constructed by means of families of martingales that are here introduced. 
\begin{lem}
\label{lem:2}
Given $\eta\in C^\infty([0,T]\times\bb T^d;\bb R^d)$ and $\psi\in C^\infty_K([0,T)\times\bb T^d)$ let $N^{1,\eta}$, $N^{2,\psi}$ be the $\bb P_\eps$-martingales defined by 
\begin{align}
\label{5}
N^{1,\eta}_t & := \eps\int_0^t\! \Big\langle \nabla u_s\cdot \eta_s\,,\, \rmd u_s - \Big(\Delta u_s - \frac 1{\eps^2} W'(u_s) \Big)\,\rmd s\Big\rangle_{L^2}\;,
\\ \label{6bb} N^{2,\psi}_t & := \int_0^t\! \Big\langle \psi_s\Big[-\eps\Delta u_s + \frac 1\eps W'(u)\Big]\,,\, \rmd u_s - \Big(\Delta u_s - \frac 1{\eps^2} W'(u_s) \Big)\,\rmd s\Big\rangle_{L^2}\;.
\end{align}
Then,
\begin{align}
\label{vva1}
N^{1,\eta}_T & = - J_\eps^u(\eta) - \int_0^T\! \delta V_{\eps,s}^u(\eta_s)\,\rmd s + \int_0^T\! \int\! \mathbf{n}^u_s \cdot D\eta_s\, \mathbf{n}^u_s\, \rmd \xi^u_{\eps,s}\,\rmd s\;, \\ \label{vva2}
\begin{split}
N^{2,\psi}_T & = - \mu^{\bar u_\eps^0}(\psi_0) - \int_0^T\! \mu^u_s(\partial_s\psi_s)\, \rmd s -J_\eps^u(\nabla\psi) - R^\psi_T  \\ & \quad + \frac 1\eps \int_0^T\!\int\!\psi \Big(\eps\Delta u - \frac 1\eps W'(u)\Big)^2 \,\rmd x\,\rmd s \;,
\end{split}
\end{align}
where $R^\psi_T$ is a random variable for which there exists a sequence $\zeta_\eps\to 0$ as $\eps\to 0$ such that 
\begin{equation}
\label{vva3}
\lim_{\eps\to 0}\, \eps\lambda_\eps\, \log\bb P_\eps\left(|R^\psi_T| > \zeta_\eps\right) = -\infty\;.
\end{equation} 
Finally, setting $N^{\eta,\psi}:= N^{1,\eta} + N^{2,\psi}$, its quadratic variation satisfies,
\begin{equation}
\label{nep}
\begin{split}
(\eps\lambda_\eps)^{-1} [N^{\eta,\psi}]_T  & \le  - 2 \int_0^T\! \delta V_{\eps,s}^u(2\psi_s\eta_s)\,\rmd s + 4 \int_0^T\! \int\! \mathbf{n}^u_s \cdot D(\psi_s\eta_s)\, \mathbf{n}^u_s\, \rmd \xi^u_{\eps,s}\,\rmd s \\ & \quad +2\int_0^T\!\int\! |\eta_s|^2 \, \rmd |V_{\eps,s}^u|\, \rmd s + 2\int_0^T\! \int\! |\eta_s|^2 \, \rmd \xi^u_{\eps,s}\,\rmd s \\ & \quad + 2\int_0^T\! \int\!  \frac 1\eps \psi^2\Big(\eps\Delta u - \frac 1\eps W'(u)\Big)^2  \, \rmd x\,\rmd s\;.
\end{split}
\end{equation}
\end{lem}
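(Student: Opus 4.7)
The plan is to establish all four assertions by direct It\^o calculus applied to \eqref{1}. Everything rests on the following integration-by-parts identity, valid for any smooth vector field $\zeta$ on $\bb T^d$:
\begin{equation*}
\eps\!\int\! \nabla u\cdot\zeta\,\bigl(\Delta u-\tfrac{1}{\eps^2}W'(u)\bigr)\rmd x
= \delta V_\eps^u(\zeta) - \int\! \mathbf{n}^u\cdot D\zeta\,\mathbf{n}^u\,\rmd\xi^u_\eps.
\end{equation*}
This is obtained by integrating $(\nabla u\cdot\zeta)\Delta u$ by parts once, rewriting $(\nabla u\cdot\zeta)W'(u)=\zeta\cdot\nabla W(u)$ and integrating by parts again, and then using the decomposition $\eps|\nabla u|^2\rmd x=\rmd\mu_\eps^u+\rmd\xi^u_\eps$ together with the form $\delta V_\eps^u(\zeta)=\int[\nabla\cdot\zeta-\mathbf{n}^u\cdot D\zeta\,\mathbf{n}^u]\rmd\mu_\eps^u$ coming from \eqref{va}.

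From here, \eqref{vva1} follows by splitting $N^{1,\eta}_T$ in \eqref{5} into its stochastic and drift parts via \eqref{1}. The stochastic part $\eps\int_0^T\langle\nabla u_s\cdot\eta_s,\sqrt{2\lambda_\eps}\rmd\alpha_s^\eps\rangle_{L^2}$ coincides with $-J_\eps^u(\eta)$ by reading \eqref{J} with $f_t(x,\Sigma):=\eta_t(x)$ independent of $\Sigma$, and the drift part is precisely the time integral of the opening identity with $\zeta=\eta_s$, giving the two remaining summands in \eqref{vva1}.

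For \eqref{vva2} the plan is to apply It\^o's formula to the real-valued semimartingale $t\mapsto|V_{\eps,t}^u|(\psi_t)=\int(\tfrac{\eps}{2}|\nabla u|^2+\tfrac{1}{\eps}W(u))\psi_t\rmd x$, producing the expansion \eqref{vva2b}: the $\partial_t\psi$ contribution gives $\int_0^T|V_{\eps,s}^u|(\partial_s\psi_s)\rmd s$; the drift of $\rmd u_s$ combined with a single spatial integration by parts produces both $-J_\eps^u(\nabla\psi)$ and the negative squared diffuse-Willmore term; the martingale piece is $N^{2,\psi}_T$; and the It\^o correction is $R^\psi$ as in \eqref{pp2.5}. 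Since $\psi$ is compactly supported in $[0,T)\times\bb T^d$ we have $\psi_T\equiv 0$, so the boundary term at $t=T$ vanishes and \eqref{vva2} is obtained by isolating $N^{2,\psi}_T$. For \eqref{vva3} I would use the deterministic bound $|R^\psi_T|\le a_\eps\bigl(C+\tfrac12\sup_{s\le T}\mc F_\eps(u_s)\bigr)$, with $a_\eps:=\eps\lambda_\eps\|\nabla j_\eps\|_{L^2}^2 T+\eps^{-1}\lambda_\eps\|j_\eps\|_{L^2}^2$, established exactly as in \eqref{rtp} via $|W''|\le C(1+\eps^{-1}W)$; since \eqref{2.5b} ensures $a_\eps\to 0$, the choice $\zeta_\eps:=\sqrt{a_\eps}$ yields the inclusion $\{|R^\psi_T|>\zeta_\eps\}\subset\{\sup_s\mc F_\eps(u_s)>2a_\eps^{-1/2}-2C\}$, and Proposition~\ref{prop:1} then gives super-exponential decay.

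For \eqref{nep} I would write $N^{\eta,\psi}=N^{1,\eta}+N^{2,\psi}$ as a single It\^o integral against $\sqrt{2\lambda_\eps}\rmd\alpha_s^\eps$ with scalar integrand $f_s:=\eps\nabla u_s\cdot\eta_s-\psi_s(\eps\Delta u_s-\eps^{-1}W'(u_s))$. The covariance of $\alpha^\eps$ combined with $\|j_\eps\|_{L^1}\le 1$ gives $[N^{\eta,\psi}]_T=2\lambda_\eps\int_0^T\|j_\eps*f_s\|_{L^2}^2\rmd s\le 2\lambda_\eps\int_0^T\|f_s\|_{L^2}^2\rmd s$. Expanding $\|f_s\|_{L^2}^2$ and dividing by $\eps\lambda_\eps$: the diagonal $\eps\nabla u\cdot\eta$ term yields $2\int|\eta|^2\bigl(\rmd|V_\eps^u|+\rmd\xi^u_\eps\bigr)\rmd s$ via $(\nabla u\cdot\eta)^2\le|\eta|^2|\nabla u|^2$ and $\eps|\nabla u|^2\rmd x=\rmd\mu_\eps^u+\rmd\xi^u_\eps$; the cross term is handled by the opening identity with $\zeta=\psi\eta$, producing $-2\delta V_\eps^u(2\psi\eta)+4\int\mathbf{n}^u\cdot D(\psi\eta)\mathbf{n}^u\rmd\xi^u_\eps$; the diagonal $\psi$ term yields the final summand. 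The one genuinely delicate point is the calibration of $\zeta_\eps$ in \eqref{vva3}: the vanishing of $R^\psi_T$ is dictated purely by the prefactor $a_\eps$ coming from \eqref{2.5b}, so $\zeta_\eps$ must tend to zero slowly enough that the exceptional set $\{\mc F_\eps(u_s)>\zeta_\eps/a_\eps\}$ still has super-exponentially small probability by Proposition~\ref{prop:1}.
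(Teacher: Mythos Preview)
Your approach is essentially identical to the paper's: the same integration-by-parts identity (which the paper records as \eqref{id6} and cites from \cite{RS}), the same It\^o expansion \eqref{vva2b} of $|V^u_{\eps,t}|(\psi_t)$ for \eqref{vva2}, the same bound \eqref{rtp} on $R^\psi_T$ combined with \eqref{2.5b} and Proposition~\ref{prop:1} for \eqref{vva3}, and the same expansion of $\|j_\eps*f_s\|_{L^2}^2$ with the identity applied to the cross term for \eqref{nep}. One slip to correct in your write-up of \eqref{vva1}: $N^{1,\eta}_T$ is already the pure noise integral (it is a martingale, there is nothing to split), and it is $-J^u_\eps(\eta)=\eps\int_0^T\langle\nabla u_t\cdot\eta_t,\rmd u_t\rangle_{L^2}$---the full semimartingale integral, per the definition \eqref{J}---that decomposes as $N^{1,\eta}_T$ plus the drift integral, the latter then being rewritten via your opening identity.
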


\begin{proof}
The equation \eqref{vva1} follows from the identity below (with $X=\eta_s$), which holds for any time $u\in H^1$ and vector field $X\in  C^\infty(\bb T^d;\bb R^d)$,
\begin{equation}
\label{id6}
\int\! \nabla u \cdot X \Big(\eps\Delta u - \frac 1\eps W'(u) \Big)\rmd x  = \delta V^u_\eps(X) - \int\! \mathbf{n}^u \cdot DX\, \mathbf{n}^u\, \rmd \xi^u_\eps\;,
\end{equation}
whose proof can be found in \cite{RS}. The representation \eqref{vva2} is deduced noticing that $N^{2,\psi}$ is the same martingale \eqref{6b} used in the proof of Lemma \ref{lem:J2}. Moreover, the bound \eqref{vva3} follows from \eqref{rtp} together with \eqref{2.5b} and Proposition \ref{prop:1}, for any $\zeta_\eps$ vanishing slower than $\eps\lambda_\eps\|\nabla j_\eps\|_{L^2}^2 +\eps^{-1}\lambda_\eps \|j_\eps\|_{L^2}^2$ as $\eps\to 0$.

We next observe that,
\[
\begin{split}
[N^{\eta,\psi}]_T & = 2\lambda_\eps \int_0^T\! \int\! \Big\{j_\eps * \Big[\eps\nabla u\cdot\eta- \psi \Big(\eps\Delta u-\frac 1\eps W'(u)\Big)\Big]\Big\}^2\, \rmd x\,\rmd s\;. \\ & \le  - 4\eps\lambda_\eps\int_0^T\! \int\! \nabla u \cdot (\psi\eta)  \Big(\eps\Delta u-\frac 1\eps W'(u)\Big)\, \rmd x\,\rmd s \\ & \quad + 2\eps\lambda_\eps\int_0^T\! \int\! \Big[\eps |\nabla u|^2 |\eta|^2 + \frac 1\eps \psi^2 \Big(\eps\Delta u-\frac 1\eps W'(u)\Big)^2\Big]\, \rmd x\,\rmd s\;,
\end{split}
\]
from which \eqref{nep} follows by applying \eqref{id6} with $X=\psi_s\eta_s$ to the first term in the right-hand side, and \eqref{dm} to the second one. 
\qed\end{proof}

Given $\bs\ell\in \bb R_+^3$, $\mc A\in \mc N(\bs\Gamma_{\bs\ell})$, $m>0$, and $N\in \bb N$, for $Z\in \bs Z$, we set,
\begin{equation}
\label{ip}
\mc I^{\eta,\chi,\psi}_{\bs\ell,\mc A,m,N}(Z) := \begin{cases} \mc I^{\eta,\chi,\psi}(Z) & \text{ if } Z \in \bs D_{\bs\ell,\mc A,m,N}, \\ +\infty & \text{ otherwise}. \end{cases}
\end{equation}
The following lemma which relies on the previous estimates, is the key step in the proof of the large deviations principle. 
\begin{lem}
\label{lem:up}
There exists a real sequence $a_{\bs\ell,m}\to +\infty$ as $\bs\ell,m\to +\infty$ such that the following holds. For each $\bs\ell\in \bb R_+^3$, $m>0$ , $\mc A\in \mc N(\bs\Gamma_{\bs\ell})$, $N\in \bb N$, $\delta>0$, each functions $\eta\in C^\infty([0,T]\times\bb T^d;\bb R^d)$, $\chi\in C^\infty_K([0,T)\times\bb T^d;\bb R^d)$, $\psi\in C^\infty_K([0,T)\times\bb T^d;[0,1))$ with $\supp(\chi)\subset\supp(\psi)$, and each Borel set $\bs B\subset \bs Z$, 
\[
\varlimsup_{\eps\to 0} \eps\lambda_\eps\log \bb P_\eps\big( Z_\eps^u \in \bs B \big) \le - \inf_{Z\in \bs B} \big\{\big[\mc I^{\eta,\chi,\psi}_{\bs\ell,\mc A,m,N}(Z)  - \delta \big] \wedge a_{\bs\ell,m}\big\}\;.
\]
\end{lem}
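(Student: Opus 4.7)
The plan is a standard exponential-tilt argument: we use the $\bb P_\eps$-supermartingales
\[
\exp\!\Big\{\tfrac{1}{\eps\lambda_\eps}N^{\eta,\psi}_t-\tfrac{1}{2(\eps\lambda_\eps)^2}[N^{\eta,\psi}]_t\Big\},\qquad N^{\eta,\psi}:=N^{1,\eta}+N^{2,\psi},
\]
with the martingales from Lemma \ref{lem:2}. Exponential Markov yields $\bb P_\eps(N^{\eta,\psi}_T-\tfrac{[N^{\eta,\psi}]_T}{2\eps\lambda_\eps}\ge\ell)\le \rme^{-\ell/(\eps\lambda_\eps)}$ for every $\ell\in\bb R$, so it will be enough to establish the deterministic lower bound $N^{\eta,\psi}_T-\tfrac{[N^{\eta,\psi}]_T}{2\eps\lambda_\eps}\ge \mc I^{\eta,\chi,\psi}(Z^u_\eps)-E^{\eta,\chi,\psi}_\eps$ for a random quantity $E^{\eta,\chi,\psi}_\eps\ge 0$ whose exceedance of $\delta/2$ is $\bb P_\eps$-super-exponentially negligible.

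\textbf{Algebraic identity.} Plug the representations \eqref{vva1}--\eqref{vva2} into $N^{\eta,\psi}_T$ and subtract the bound \eqref{nep} on $[N^{\eta,\psi}]_T/(2\eps\lambda_\eps)$. Using linearity (in particular $-\delta V^u_\eps(\eta)+\delta V^u_\eps(2\psi\eta)=\delta V^u_\eps((2\psi-1)\eta)$ and $\psi-\psi^2=\psi(1-\psi)$) and $\mu^u_\eps=|V^u_\eps|$, one obtains
\begin{align*}
N^{\eta,\psi}_T-\tfrac{[N^{\eta,\psi}]_T}{2\eps\lambda_\eps}
\ge\, & -J^u_\eps(\eta)-J^u_\eps(\nabla\psi)-\mu^{\bar u_0^\eps}(\psi_0)-\!\int_0^T\!|V^u_\eps|(\partial_t\psi_t)\rmd t\\
&+\!\int_0^T\!\delta V^u_\eps((2\psi_t-1)\eta_t)\rmd t-\!\int_0^T\!|V^u_\eps|(|\eta_t|^2)\rmd t\\
&+\!\int_0^T\!\!\!\int\!\tfrac{\psi_t(1-\psi_t)}{\eps}\Big(\eps\Delta u-\tfrac{W'(u)}{\eps}\Big)^{\!2}\rmd x\,\rmd t-R^\psi_T-E^{(1)}_\eps,
\end{align*}
with $E^{(1)}_\eps$ a sum of discrepancy-integrated terms of the form $\int_0^T\!\int\mathbf n^u\!\cdot\! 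D(\cdot)\mathbf n^u\,\rmd\xi^u_{\eps,t}\,\rmd t$, hence bounded by $C(\eta,\psi)\int_0^T\|\xi^u_{\eps,t}\|_\mathrm{TV}\rmd t$.

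\textbf{Absorbing the auxiliary field $\chi$.} Apply \eqref{id6} with $X_t=\chi_t\sqrt{\psi_t(1-\psi_t)}$ (smooth by the hypotheses on $\psi$ and $\mathrm{supp}(\chi)\subset\mathrm{supp}(\psi)$) and Young's inequality $ab\le \tfrac14 a^2+b^2$ with $a=\sqrt\eps\,\nabla u\!\cdot\!\chi$ and $b=\sqrt{\psi(1-\psi)/\eps}\big(\eps\Delta u-\eps^{-1}W'(u)\big)$. The crucial input is the algebraic identity $\eps|\nabla u|^2\rmd x=\rmd|V^u_\eps|+\rmd\xi^u_\eps$ (a direct consequence of \eqref{en} and \eqref{dm}), which gives the \emph{sharp} equality $\tfrac14\eps|\nabla u|^2|\chi|^2\rmd x=\tfrac14|\chi|^2\rmd|V^u_\eps|+\tfrac14|\chi|^2\rmd\xi^u_\eps$ and thus produces exactly the factor $1/4$ needed in \eqref{mcI}. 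Combining with \eqref{id6},
\[
\int_0^T\!\!\!\int\tfrac{\psi_t(1-\psi_t)}{\eps}\Big(\eps\Delta u-\tfrac{W'(u)}{\eps}\Big)^{\!2}\rmd x\,\rmd t\ge \!\int_0^T\!\delta V^u_\eps(\chi_t\sqrt{\psi_t(1-\psi_t)})\rmd t-\tfrac14\!\int_0^T\!|V^u_\eps|(|\chi_t|^2)\rmd t-E^{(2)}_\eps,
\]
with $E^{(2)}_\eps$ again dominated by $C(\eta,\chi,\psi)\int_0^T\|\xi^u_{\eps,t}\|_\mathrm{TV}\rmd t$. Plugging this into the previous step, using $\mu^u_\eps=|V^u_\eps|$, and replacing $\mu^{\bar u_0^\eps}(\psi_0)$ by $\bar\mu_0(\psi_0)$ up to a deterministic $o(1)$ (Assumption \ref{t:au0}(c)), gives $N^{\eta,\psi}_T-\tfrac{[N^{\eta,\psi}]_T}{2\eps\lambda_\eps}\ge \mc I^{\eta,\chi,\psi}(Z^u_\eps)-E^{\eta,\chi,\psi}_\eps$ with
\[
E^{\eta,\chi,\psi}_\eps=|\mu^{\bar u_0^\eps}(\psi_0)-\bar\mu_0(\psi_0)|+|R^\psi_T|+C(\eta,\chi,\psi)\!\int_0^T\!\|\xi^u_{\eps,t}\|_\mathrm{TV}\rmd t.
\]

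\textbf{Probabilistic conclusion.} On the event $\{Z^u_\eps\in\bs B\cap\bs D_{\bs\ell,\mc A,m,N}\}\cap\{E^{\eta,\chi,\psi}_\eps\le\delta/2\}$ one has $\mc I^{\eta,\chi,\psi}_{\bs\ell,\mc A,m,N}(Z^u_\eps)\ge\inf_{\bs B}\mc I^{\eta,\chi,\psi}_{\bs\ell,\mc A,m,N}$ and hence $N^{\eta,\psi}_T-\tfrac{[N^{\eta,\psi}]_T}{2\eps\lambda_\eps}\ge \inf_{\bs B}\mc I^{\eta,\chi,\psi}_{\bs\ell,\mc A,m,N}-\delta$. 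The Markov-type bound above handles this contribution, while the complementary events are super-exponentially small: $\{E^{\eta,\chi,\psi}_\eps>\delta/2\}$ by \eqref{vva3} and the first estimate of Proposition \ref{lem:zd}, and $\{Z^u_\eps\notin\bs D_{\bs\ell,\mc A,m,N}\}$ by the second estimate of the same proposition, which supplies a sequence $a_{\bs\ell,m}\to+\infty$ (uniformly in $\mc A,N$) with $\varlimsup_\eps\eps\lambda_\eps\log\bb P_\eps(Z^u_\eps\notin\bs D_{\bs\ell,\mc A,m,N})\le -a_{\bs\ell,m}$. Combining these three pieces by \eqref{bbb} yields the claim. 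The main technical obstacle is the coefficient-matching in the second step: if one had used the cruder $\eps|\nabla u|^2\le 2|V^u_\eps|$ one would only obtain $-\tfrac12|V|(|\chi|^2)$ in the lower bound, missing the rate function \eqref{mcI}; the equality $\eps|\nabla u|^2\rmd x=\rmd|V^u_\eps|+\rmd\xi^u_\eps$ exactly produces the required $1/4$, modulo a term $\tfrac14|\chi|^2\rmd\xi^u_\eps$ that is harmlessly absorbed into $E^{\eta,\chi,\psi}_\eps$.
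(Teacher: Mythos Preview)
Your proof is correct and follows essentially the same route as the paper: the paper introduces the tilted sub-probability $\rmd\bb P_\eps^{\eta,\psi}=\exp\{N^{\eta,\psi,\eps}_T-\tfrac12[N^{\eta,\psi,\eps}]_T\}\,\rmd\bb P_\eps$ and bounds the indicator expectation, which is exactly your exponential-Markov step in disguise; the algebraic lower bound on $N^{\eta,\psi}_T-\tfrac{1}{2\eps\lambda_\eps}[N^{\eta,\psi}]_T$ is obtained the same way, the paper phrasing the $\chi$-step via the pointwise inequality $\psi(1-\psi)y^2\ge\lambda\sqrt{\psi(1-\psi)}\,y-\lambda^2/4$ with $\lambda=\eps\nabla u\cdot\chi$, which is your Young inequality, and then using the same identity $\eps|\nabla u|^2\,\rmd x=\rmd|V^u_\eps|+\rmd\xi^u_\eps$ (implicitly, via ``definition \eqref{dm}'') to get the sharp $1/4$. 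Your emphasis on why the crude bound $\eps|\nabla u|^2\le 2|V^u_\eps|$ would lose the constant is a helpful clarification not spelled out in the paper, but the argument is the same.
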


\begin{proof}
Let $N^{\eta,\psi,\eps}$ be the martingale introduced in Lemma \ref{lem:2} with $\eta$ and $\psi$ replaced by $(\eps\lambda_\eps)^{-1}\eta$ and $(\eps\lambda_\eps)^{-1}\psi$ respectively. By using the exponential martingale of $N^{\eta,\psi,\eps}$ we introduce the sub-probability, 
\begin{equation}
\label{pti}
\rmd \bb P_\eps^{\eta,\psi} := \rmd \bb P_\eps \exp \Big\{ N^{\eta,\psi,\eps}_T - \frac 12 [N^{\eta,\psi,\eps}]_T\Big\}\;.
\end{equation}
By using \eqref{vva1}, \eqref{vva2}, \eqref{nep}, and recalling \eqref{mcI},
\[
\begin{split}
 & N^{\eta,\psi,\eps}_T - \frac 12 [N^{\eta,\psi,\eps}]_T \ge (\eps\lambda_\eps)^{-1} \Big\{\mc I^{\eta,\chi,\psi}(Z_\eps^u) + \bar\mu_0(\psi_0) - \mu^{\bar u_\eps^0}(\psi_0) - R^\psi_T \\ & \qquad \quad +  \int_0^T\! \int\! \Big[\mathbf{n}^u_s \cdot D\big((1-2\psi_s)\eta_s\big)\, \mathbf{n}^u_s - |\eta_s|^2\Big]\, \rmd \xi^u_{\eps,s}\,\rmd s + \mc R^{\chi,\psi}(u) \Big\}\;,
\end{split}
\]
where
\[
\begin{split}
\mc R^{\chi,\psi}(u) & := \frac 1\eps \int_0^T\!\int\!\psi(1-\psi)\Big(\eps\Delta u - \frac 1\eps W'(u)\Big)^2 \,\rmd x\,\rmd s \\ & \quad - \int_0^T\! \delta V_s (\chi_s\sqrt{\psi_s(1-\psi_s)})\,\rmd s + \int_0^T\!\int\! \frac{|\chi_s|^2}4\, \rmd |V_{\eps,s}^u|\, \rmd s \;.
\end{split}
\]
Plugging $\lambda = \eps \nabla u\cdot \chi$ in the inequality
\[
\psi(1-\psi)\Big(\eps\Delta u - \frac 1\eps W'(u)\Big)^2 \ge \lambda \sqrt{\psi(1-\psi)}\Big(\eps\Delta u - \frac 1\eps W'(u)\Big) - \frac{\lambda^2}{4} \quad\forall\, \lambda\in\bb R\;, 
\]
we get,
\[
\begin{split}
& \frac 1\eps \int_0^T\!\int\!\psi(1-\psi)\Big(\eps\Delta u - \frac 1\eps W'(u)\Big)^2\,\rmd x\,\rmd s \\  & \ge \int_0^T\!\int\!  \nabla u\cdot \chi \sqrt{\psi(1-\psi)}\Big(\eps\Delta u - \frac 1\eps W'(u)\Big) \,\rmd x\,\rmd s  - \int_0^T\!\int\!\frac{|\nabla u|^2 |\chi|^2}{4}\,\rmd x\,\rmd s \;,
\end{split}
\]
which implies, by \eqref{id6} with $X=\chi_s\sqrt{\psi_s(1-\psi_s)}$ and definition \eqref{dm}, 
\[
\mc R^{\chi,\psi}(u) \ge \int_0^T\! \int\! \Big[-\mathbf{n}^u_s \cdot D\big(\chi_s\sqrt{\psi_s(1-\psi_s)}\big)\, \mathbf{n}^u_s - \frac{|\chi_s|^2}{4}\Big]\, \rmd \xi^u_{\eps,s}\,\rmd s\;.
\]

To prove the statement we observe,
\begin{equation}
\label{inclu}
\begin{split}
& \{Z_\eps^u \in \bs B\} \subset \Big\{Z_\eps^u \in \bs B\cap \bs D_{\bs\ell,\mc A, m,N}\;,\; \int_0^T \!\big\| \xi^u_{\eps,t}\big\|_\mathrm{TV} \, \rmd t \le \delta\,,\, R^\psi_T \le \zeta_\eps \Big\} \\ & \quad\qquad  \cup \big\{Z_\eps^u \notin \bs D_{\bs\ell,\mc A, m,N}\big\}\cup\Big\{u\colon \int_0^T \!\big\| \xi^u_{\eps,t}\big\|_\mathrm{TV} \, \rmd t > \delta\Big\} \cup \big\{u\colon R^\psi_T > \zeta_\eps\big\}\;, 
\end{split}
\end{equation}
with $R^\psi_T$ and $\zeta_\eps$ as in \eqref{vva2}-\eqref{vva3}. Letting $\zeta_\eps' = |\bar\mu_0(\psi_0) - \mu^{\bar u_\eps^0}(\psi_0)|+\zeta_\eps$ we bound,
\[
\begin{split}
& \bb P_\eps \Big( Z_\eps^u \in \bs B\cap \bs D_{\bs\ell,\mc A, m,N}\;,\; \int_0^T \!\big\| \xi^u_{\eps,t}\big\|_\mathrm{TV} \, \rmd t \le \delta\,,\, R^\psi_T \le \zeta_\eps \Big) \\ & \le \bb E_\eps^{\eta,\psi}\Big(\rme^{- (\eps\lambda_\eps)^{-1} \big(\mc I^{\eta,\chi,\psi}(Z_\eps^u) - C_{\eta,\chi,\psi}\delta  -  \zeta_\eps'\big)} \id_{\bs B\cap \bs D_{\bs\ell,\mc A, m,N}} (Z_\eps^u)\Big) \\ & \le \exp\Big\{-(\eps\lambda_\eps)^{-1} \inf_{Z\in \bs B\cap \bs D_{\bs\ell,\mc A, m,N}} \big[\mc I^{\eta,\chi,\psi}(Z)  - C_{\eta,\chi,\psi} \delta - \zeta_\eps' \big] \Big\} \\ & = \exp\Big\{-(\eps\lambda_\eps)^{-1} \inf_{Z\in \bs B} \big[\mc I^{\eta,\chi,\psi}_{\bs\ell,\mc A,m,N}(Z)  - C_{\eta,\chi,\psi} \delta  - \zeta_\eps' \big] \Big\} \;,
\end{split}
\]
where $\bb E_\eps^{\eta,\psi}$ denotes the expectation with respect to the measure $\bb P_\eps^{\eta,\psi}$ defined in \eqref{pti} and 
\[
C_{\eta,\chi,\psi} := \|D\big((2\psi-1)\eta+\chi\sqrt{\psi(1-\psi)}\big)\|_\infty +  \|\eta\|_\infty^2 + \frac 14 \big\||\chi|^2\big\|_\infty\;.
\]
By redefining $\delta$, the proof of the lemma is now achieved, in view of the inclusion \eqref{inclu}, by the previous bound, \eqref{vva3}, and Proposition~\ref{lem:zd}.
\qed\end{proof}

\subsection{Minimax}

By applying a minimax argument, we next optimize the bound in Lemma \ref{lem:up} and deduce the large deviations upper bound for compacts. 
\begin{lem}
\label{lem:in}
For each compact $\bs K\subset \bs Z$,
\begin{equation}
\label{qu}
\varlimsup_{\eps\to 0} \eps\lambda_\eps \log\bb P_\eps\big( Z_\eps^u\in \bs K\big) \le -\inf_{Z\in \bs K} \mc I(Z)\;.
\end{equation}
\end{lem}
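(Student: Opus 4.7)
The plan is to execute a standard minimax argument based on the family of bounds in Lemma \ref{lem:up}, combined with the super-exponential tail estimate from Proposition \ref{lem:zd}. Fix arbitrary $L,\delta>0$; it will be enough to show that $\varlimsup_\eps \eps\lambda_\eps \log \bb P_\eps(Z_\eps^u\in \bs K) \le -[(\inf_{\bs K}\mc I)\wedge L]+\delta$ and then send $L\to +\infty$ and $\delta\to 0$. I first use Proposition \ref{lem:zd} to fix $\bs\ell^*, m^*$ large enough that $a_{\bs\ell^*,m^*}\ge L$ and
\[
\sup_{\mc A,N}\varlimsup_\eps \eps\lambda_\eps \log \bb P_\eps(Z_\eps^u \notin \bs D_{\bs\ell^*,\mc A,m^*,N})\le -L.
\]
With $\bs\ell^*, m^*$ frozen, only $\mc A \supset \bs\Gamma_{\bs\ell^*}$ and $N$ will be varied in the local constructions below.

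The core step is a pointwise local bound. For each $Z_0 \in \bs K$ I produce an open neighborhood $\bs O_{Z_0}$, parameters $\mc A_{Z_0}, N_{Z_0}$, and test functions $(\eta_{Z_0},\chi_{Z_0},\psi_{Z_0})$ such that
\[
\inf_{Z \in \bs O_{Z_0}} \mc I^{\eta_{Z_0},\chi_{Z_0},\psi_{Z_0}}_{\bs\ell^*,\mc A_{Z_0},m^*,N_{Z_0}}(Z) \ge (\mc I(Z_0)\wedge L)-\delta.
\]
Two alternatives arise. In the \emph{exclusion} alternative, at least one of the four closed conditions defining $\bs D_{\bs\ell^*,\mc A,m^*,N}$ can be made to fail at $Z_0$ by a choice of $\mc A$ or $N$: either $(u_0,V_0)$ lies at positive distance from the compact set $\bs\Gamma_{\bs\ell^*}$ (Theorem \ref{t:ap}), so $\mc A_{Z_0}$ may be taken as a sufficiently small open neighborhood of $\bs\Gamma_{\bs\ell^*}$ with $(u_0,V_0)\notin \overline{\mc A_{Z_0}}$; or $(V_0,J_0)\notin \bs F_{m^*,N_{Z_0}}$; or $(V_0,J_0)\notin \bs G_{m^*,N_{Z_0}}$; or $J_0\notin \bs L^\perp$. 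In each sub-case the complement of the failing closed set provides an open neighborhood of $Z_0$ on which the functional in \eqref{ip} equals $+\infty$, so the bound is trivial.

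In the complementary \emph{non-exclusion} alternative, no such choice works and hence $(u_0,V_0) \in \bs\Gamma_{\bs\ell^*}$, $(V_0,J_0)\in \bs F_{m^*,N}\cap \bs G_{m^*,N}$ for all $N$, and $J_0\in \bs L^\perp$. I then claim $Z_0 \in \bs D$. Indeed, testing $cf^{(k)}$ for $c\in \bb R$ in $\bs F_{m^*,N}$ (legitimate since $J_0$ is continuous on $\bs H^{\bs s}$) and optimizing over $c$ yields $|J_0(f^{(k)})|\le \sqrt{2m^*\int |V_t|(|f^{(k)}|^2)\,\rmd t}$; by density of $(f^{(k)})$ in $\bs H^{\bs s}$ and in $L^2(V_t\,\rmd t;\bb R^d)$, $J_0$ extends to a continuous linear functional on $L^2(V_t\,\rmd t;\bb R^d)$, and Riesz provides $\nu\in L^2(V_t\,\rmd t;\bb R^d)$ representing it. The orthogonality $\nu \perp \tau_x|V_t|$ follows from $J_0\in \bs L^\perp$. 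Density of $(\psi^{(k)})$ combined with the $\bs G_{m^*,N}$ bound yields \eqref{nut2}; combined with items b.1)--b.2) of Theorem \ref{t:ap}, this shows that $\tau^{-1}V_0$ is an $L^2$-flow with velocity $\nu$, placing $Z_0 \in \bs D$. Then the variational formula \eqref{mci} furnishes $(\eta_{Z_0},\chi_{Z_0},\psi_{Z_0})$ with $\mc I^{\eta_{Z_0},\chi_{Z_0},\psi_{Z_0}}(Z_0) \ge (\mc I(Z_0)\wedge L)-\delta/2$. By the continuity of $Z\mapsto\mc I^{\eta,\chi,\psi}(Z)$ noted after \eqref{mcI}, this extends to an open neighborhood $\bs O_{Z_0}$; since $\mc I^{\eta,\chi,\psi}_{\bs\ell^*,\mc A,m^*,N}\ge \mc I^{\eta,\chi,\psi}$ pointwise on $\bs Z$, the required infimum bound holds for any admissible choice of $\mc A_{Z_0}, N_{Z_0}$.

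Finally, by compactness of $\bs K$, extract a finite subcover $\{\bs O_{Z_i}\}_{i=1}^r$, apply Lemma \ref{lem:up} to $\bs B=\bs O_{Z_i}$ for each $i$ with the test data built above, and combine via the union bound \eqref{bbb}:
\[
\varlimsup_\eps \eps\lambda_\eps \log \bb P_\eps(Z_\eps^u\in \bs K) \le -\min_i \inf_{\bs O_{Z_i}} \big\{[\mc I^{\eta_i,\chi_i,\psi_i}_{\bs\ell^*,\mc A_i,m^*,N_i}-\delta]\wedge a_{\bs\ell^*,m^*}\big\} \le -[(\inf_{\bs K}\mc I)\wedge L]+\delta.
\]
Letting $L\to +\infty$ and $\delta\to 0$ yields \eqref{qu}. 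The main subtle point is the identification $Z_0 \in \bs D$ in the non-exclusion alternative: one must convert the uniform-in-$N$ bounds encoded by the penalization families $\bs F_{m^*,N}$ and $\bs G_{m^*,N}$ into a genuine $L^2$-velocity representing $J_0$ and into the $L^2$-flow inequalities of Definition \ref{def:1}, which is a Riesz representation argument resting on the density of the countable test families $(f^{(k)})$ and $(\psi^{(k)})$ in the relevant function spaces.
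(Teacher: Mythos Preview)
Your proof is correct and takes essentially the same approach as the paper. The only difference is cosmetic: the paper invokes an abstract minimax lemma from \cite{KL} to exchange $\inf_{\bs K}$ with the supremum over all parameters $(\eta,\chi,\psi,\bs\ell,\mc A,m,N,\delta)$ and then separately identifies the resulting functional with $\mc I$ by verifying $\bigcup_{\bs\ell,m}\bs D_{\bs\ell,m}=\bs D$, whereas you unfold the minimax by hand via a finite open cover of $\bs K$ and embed that same identification (your non-exclusion alternative, leading to $Z_0\in\bs D$ by Riesz representation and density of the test families) directly into the pointwise construction.
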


\begin{proof}
First we notice that in view of Lemma~\ref{lem:up} for each open set $\bs A\subset \bs Z$ we have
\[
\varlimsup_{\eps\to 0} \eps\lambda_\eps \log\bb P_\eps\big( Z_\eps^u\in \bs A\big) \le -  \sup_{\eta,\chi,\psi}\sup_{\bs\ell}\sup_{\mc A,m,N,\delta} \inf_{Z\in \bs A} \big\{\big[\mc I^{\eta,\chi,\psi}_{\bs\ell,\mc A,m,N}(Z)  - \delta \big] \wedge a_{\bs\ell,m}\big\}\;.
\]
Notice that for each $\bs\ell$, $m$, $\mc A$, $N$, $\delta$, and each functions $\eta$, $\chi$, $\psi$ with $\supp(\chi)\subset\supp(\psi)$ the map $Z \mapsto \big[\mc I^{\eta,\chi,\psi}_{\bs\ell,\mc A,m,N}(Z)  - \delta \big] \wedge a_{\bs\ell,m}$ is continuous.
In view of the minimax lemma in \cite[App.~2, Lemmata 3.2 and 3.3]{KL} (notice that both the proofs hold true for compact sets in Hausdorff topological spaces), from the previous bound we deduce that \eqref{qu} holds with rate function 
\[
\mc I_0 =  \sup_{\eta,\chi,\psi}\sup_{\bs\ell}\sup_{\mc A,m,N,\delta}\big[\mc I^{\eta,\chi,\psi}_{\bs\ell,\mc A,m,N} - \delta \big] \wedge a_{\bs\ell,m}\;.
\]
It thus remains to prove that $\mc I_0 = \mc I$. We first take the supremum over $\delta>0$ and  $\mc A \in \mc N(\bs\Gamma_{\bs\ell})$. We get 
\[
\mc I_0 = \mc I_1 := \sup_{\eta,\chi,\psi} \sup_{\bs\ell,m,N} \mc I^{\eta,\chi,\psi}_{\bs\ell,m,N} \wedge a_{\bs\ell,m}\;,
\] 
where
\[
\mc I^{\eta,\chi,\psi}_{\bs\ell,m,N}(Z) = \begin{cases} \mc I^{\eta,\chi,\psi}(Z) & \text{ if } Z \in \bs D_{\bs\ell,m,N}, \\ +\infty & \text{ otherwise,} \end{cases}
\]
where $\bs D_{\bs\ell,m,N} := \bigcap_{\mc A\in \mc N(\bs\Gamma_{\bs \ell})}\bs D_{\bs\ell,\mc A,m,N}$. Taking $(f^{(k)})_{k\in \bb N}$ and $(\psi^{(k)})_{k\in \bb N}$ as at the beginning of Subsection \ref{subsec:4.1}, we let
\[
\begin{split}
\bs F_m & := \Big\{ (V,J)\in \bs V\times\bs H^{-\bs s}\colon \sup_k \Big[ J(f^{(k)}) -\frac 12\int_0^T\!\int |f^{(k)}|^2\, \rmd V_t\, \rmd t\Big]\le m \Big\}\;, \\ \bs G_m & := \Big\{ (V,J)\in \bs V\times\bs H^{-\bs s}\colon \sup_k \Big[ J(\nabla \psi^{(k)}) +\int_0^T\! V_t(\partial_t \psi^{(k)})\, \rmd t\Big] \le m \Big\}\;,
\end{split}
\]
and set
\begin{equation}
\label{Iboh2}
\begin{split}
&\bs D_{\bs\ell,m} := \bigcap_N  \bs D_{\bs\ell,m, N} \\
&= \Big\{ (u,V,J) \in \bs Z \;:\;\;  (u,V)\in \bs\Gamma_{\bs\ell}\;,\;\; (V,J)\in \bs F_m \cap \bs G_m\;,\;\; J(f)=0\;\; \forall\,f\in\bs L \Big\}\;.
\end{split}
\end{equation}
By taking the supremum over $N$, we deduce that 
\begin{equation}
\label{Iboh1}
\mc I_1 = \mc I_2 := \sup_{\eta,\chi,\psi} \sup_{\bs\ell,m} \mc I^{\eta,\chi,\psi}_{\bs\ell,m} \wedge a_{\bs\ell,m}\;,
\end{equation}
where
\begin{equation}
\label{Iboh1bis}
\mc I^{\eta,\chi,\psi}_{\bs\ell,m}(Z) := \begin{cases} \mc I^{\eta,\chi,\psi}(Z) & \text{ if } Z \in \bs D_{\bs\ell,m}, \\ +\infty & \text{ otherwise.} \end{cases}
\end{equation}

Since $(f^{(k)})_{k\in \bb N} \subset \bs H^{\bs s} \subset C([0,T] \times \bb T^d \times \Lambda_{d-1}; \bb R^d)$ with dense inclusions, if $(V,J) \in \bs F_m$ then $J$ extends by density to a continuous functional on $L^2(V_t\,\rmd t;\bb R^d)$ still denoted by $J$. By Riesz's representation lemma there exist $\nu\in L^2(V_t\,\rmd t;\bb R^d)$ such that
\begin{equation}
\label{Iboh3} 
J(f) = \int_0^T\!\int\! f\cdot \nu \, \rmd V_t \,\rmd t\;, \qquad f\in L^2(V_t\,\rmd t;\bb R^d)\;.
\end{equation}
We claim that $\bs D = \bigcup_{\bs\ell,m} \bs D_{\bs\ell,m}$, recall Definition \ref{def:d}. Clearly, the inclusion $\bs D \subset \bigcup_{\bs\ell,m} \bs D_{\bs\ell,m}$ holds by Definitions \ref{def:1}, \ref{def:d} and Remark \ref{rem:4}. To prove the other inclusion we first show that $Z\in \bs D_{\bs\ell,m}$ implies $\tau^{-1}V$ is an $L^2$-flow with velocity $\nu$. To this end, observe first that since $(u,V)\in \bs\Gamma_{\bs \ell}$, conditions a) and b) in Definition \ref{def:1} are fulfilled by Theorem \ref{t:ap}. Next, recalling the definition of $\bs L$ in Remark~\ref{rem:2}, the definition of $\bs D_{\bs \ell,m}$ in \eqref{Iboh2} and the representation \eqref{Iboh3}, for each $\eta\in C^\infty([0,T]\times \bb T^d;\bb R^d)$ we have,
\[
\int \! \nu_t(x) \cdot P_{\tau_x |V_t|}\eta_t(x) \,  |V_t|(\rmd x) \,\rmd t = 0\;,
\]
where $P_{\tau_x |V_t|}$ is the orthogonal projector onto the tangent plane to $|V_t|$ at the point $x$. This equation implies that $\nu_t(x) \perp \tau_x|V_t|$ for $|V_t|\,\rmd t$-a.e.\ $(t,x)$, i.e., the orthogonality condition \eqref{nut1} in Definition \ref{def:1}. Moreover, condition \eqref{nut2} is equivalent to the statement $(V,J)\in \bigcup_m\bs G_m$ in view of the density of $(\psi^{(k)})_{k\in \bb N}$ in the unit ball of $C_0((0,T)\times \bb T^d)$. We conclude that $\tau^{-1}V$ is $L^2$-flow with velocity $\nu$. Since, by Definition \ref{def:2}, $\bs \Gamma = \bigcup_{\bs\ell}\bs \Gamma_{\bs\ell}$, the inclusion $\bs D \supset \bigcup_{\bs\ell,m} \bs D_{\bs\ell,m}$ follows. 

The previous claim readily implies that 
\[
\sup_{\bs\ell,m} \mc I^{\eta,\chi,\psi}_{\bs\ell,m}(Z) = \begin{cases} \mc I^{\eta,\chi,\psi}(Z) & \text{ if } Z \in \bs D, \\ +\infty & \text{ otherwise.} \end{cases}
\]
Hence, by \eqref{mci} and \eqref{Iboh1}, $\mc I = \mc I_2$.
\qed\end{proof}

\subsection{Conclusion}

Given a sequence $\beta_\eps\downarrow 0$, we recall that a family of probabilities measures $\mc P_\eps$ on a Hausdorff topological space $\mc X$ is \emph{exponentially tight} with speed $\beta_\eps$ iff there exists a sequence of compacts $K_\ell\subset \mc X$ such that 
\[
\lim_{\ell\to +\infty}\varlimsup_{\eps\to 0} \beta_\eps \log \mc P_\eps\big(K_\ell^\complement\big) = -\infty\;.
\]

\begin{lem}
\label{lem:et}
The family of probabilities $(\bb P_\eps \circ (Z_\eps^u)^{-1})_{\eps>0}$ on $\bs Z$ is exponentially tight with speed $\eps\lambda_\eps$.
\end{lem}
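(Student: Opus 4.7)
My plan is to construct, for each $\ell > 0$, a compact set $K_\ell = K_\ell^1 \times K_\ell^2 \times K_\ell^3 \subset \bs U \times \bs V \times \bs H^{-\bs s}$ whose complement has $\bb P_\eps$-probability super-exponentially small in $\eps\lambda_\eps$ as $\ell\to\infty$. Since $\bs Z$ carries the product topology, a union bound together with the elementary estimate \eqref{bbb} reduces the claim to controlling each of the three factors separately.

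For the $\bs V$ factor, I take $K_\ell^2$ to be the closed ball of radius $\ell$ in $L^\infty([0,T];\mc M(\bb T^d\times\Lambda_{d-1}))$, which is compact in the bounded weak* topology by the statement recalled in Section \ref{sec:2}. Since $\||V_{\eps,t}^u|\|_\mathrm{TV}=\mc F_\eps(u_t)$, Proposition \ref{prop:1} directly gives super-exponential smallness of $\bb P_\eps(V_\eps^u\notin K_\ell^2)$.

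For the $\bs U$ factor, I invoke Ascoli-Arzelà in $C([0,T];L^1)$. The primitive $G(u):=\int_0^u\!\sqrt{2W(v)}\,\rmd v$ satisfies $|\nabla G(u_t)|\le \frac\eps 2|\nabla u_t|^2 + \frac 1\eps W(u_t)$ and $|G(u_t)|\le C(W(u_t)+1)$ by Assumption \ref{t:ws}, so $\sup_t\|G(u_t)\|_{BV}\le C\sup_t\mc F_\eps(u_t)+C$. By Rellich compactness of $BV(\bb T^d)\hookrightarrow L^1$ and the $\frac12$-Hölder continuity $|G^{-1}(a)-G^{-1}(b)|\le C\sqrt{|a-b|}$ (already used in the proof of Proposition \ref{t:omep}), the set $\mc L_\ell := G^{-1}\big(\{v\in BV(\bb T^d)\colon\|v\|_{BV}\le C\ell\}\big)$ is compact in $L^1$. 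Taking
\[
K_\ell^1 := \big\{u\in C([0,T];L^1)\colon u_t\in\mc L_\ell\;\forall\, t,\;\omega^\infty(u;\delta)\le \ell\delta^{\alpha_2}\;\forall\, \delta\in(0,T]\big\},
\]
Ascoli-Arzelà yields compactness in $\bs U$, while Propositions \ref{prop:1} and \ref{t:omep} combined provide the desired probability bound on $\bb P_\eps(u\notin K_\ell^1)$.

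For the $\bs H^{-\bs s}$ factor, balls in $\bs H^{-\bs s}$ itself are not compact. The idea is to choose auxiliary exponents $\bs s^*\in (\frac12,1)\times (\frac d2,+\infty)\times (\frac{d-1}2,+\infty)$ with $s_i^*<s_i$ strictly for $i=1,2,3$. By iterating the Rellich-Kondrachov compact embedding on the factors $\Lambda_{d-1}$, $\bb T^d$, and $[0,T]$ in the vector-valued fractional scale, the inclusion $\bs H^{\bs s}\hookrightarrow \bs H^{\bs s^*}$ is compact, hence by duality so is $\bs H^{-\bs s^*}\hookrightarrow \bs H^{-\bs s}$. Letting $K_\ell^3$ be the closed ball of radius $\ell$ in $\bs H^{-\bs s^*}$ viewed as a subset of $\bs H^{-\bs s}$, it is compact in $\bs H^{-\bs s}$, and Lemma \ref{lem:J3} applied with $\bs s^*$ in place of $\bs s$ (which is still in the admissible open range) provides the super-exponential estimate on $\bb P_\eps(J_\eps^u\notin K_\ell^3)$. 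The main technical point to verify is the iterated compact Sobolev embedding $\bs H^{\bs s}\hookrightarrow\bs H^{\bs s^*}$ in the vector-valued fractional scale; once that is granted, assembling the three factor bounds via \eqref{bbb} concludes the argument.
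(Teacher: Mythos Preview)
Your proposal is correct and follows essentially the same route as the paper: the paper also treats each factor separately, using the mass bound with Proposition~\ref{prop:1} and bounded weak* compactness for $\bs V$, the $BV$-bound on $G(u)$ together with equicontinuity from Proposition~\ref{t:omep} and Ascoli--Arzel\`a for $\bs U$, and a strictly smaller admissible exponent $\bs\sigma<\bs s$ with the compact embedding $\bs H^{-\bs\sigma}\hookrightarrow\bs H^{-\bs s}$ and Lemma~\ref{lem:J3} for the current. The only cosmetic differences are that the paper phrases the $\bs U$-compact as $\{\sup_t\|G(u_t)\|_{BV}\le\ell\}$ rather than via $G^{-1}$ of a $BV$-ball, and imposes the modulus bound only along a dyadic sequence $\bar\delta_k$ (which it needs for the union bound over $k$ already carried out in the proof of Proposition~\ref{lem:zd}); you should likewise reduce your condition ``for all $\delta\in(0,T]$'' to a countable scale before invoking Proposition~\ref{t:omep}.
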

\begin{proof}
We shall prove separately the exponential tightness of each variable. Concerning the compactness of $u$, for any $\ell>0$, as in the proof of Proposition \ref{lem:zd} we introduce the following subset of $\bs U$,
\[
K_\ell = \Big\{ u \in \bs U  \colon \sup_{t\in[0,T]} \|G(u_t) \|_{BV} \leq \ell\;, \;\;\omega^\infty(u; \bar{\delta}_k) \leq  \frac{ \ell {\bar \delta}^{1-\alpha_2}}{4 T^{1-\alpha_2}} \bar{\delta}_k^{\alpha_2} \;\;\forall\, k\in \bb N\Big\} \;, 
\]
where, as in Lemma \ref{t:ome}, $G(u)=\int_0^u \sqrt{2W(v)} \rmd v$ and $\bar{\delta}_k = \bar{\delta} 2^{-k}$.
Combining \eqref{3bb} and \eqref{lunga4} we have the estimate,
\[
\lim_{\ell\to +\infty}\varlimsup_{\eps\to 0} \eps\lambda_\eps \log \bb P_\eps\big(u\notin K_\ell \big) = -\infty\;.
\]
Moreover, arguing as in the proof of Theorem \ref{t:ap}, from the compact embedding $BV \hookrightarrow L^1$  and the equi-continuity of elements in $K_\ell$ as $\bar{\delta}_k \to 0$ we deduce that $K_\ell\subset \bs U$ is compact by Ascoli-Arzel\`a theorem.  

Recalling that $\bs V$ is equipped with the bounded weak* topology, norm bounded sets are precompact. Hence the exponential tightness of $V_\eps^u$ is a direct consequence of Proposition \ref{prop:1}, by choosing $K_\ell = \{V\in \bs V\colon \mathrm{ess\, sup}_{t\in[0,T]} \|V_t\|_{TV}\le\ell\}$ which is compact. 

We finally prove the exponential tightness of $J_\eps^u$. Given $\bs s\in (\frac 12,1)\times (\frac d2 ,+\infty)\times (\frac{d-1}2,+\infty)$ pick $\bs\sigma\in (\frac 12, s_1)\times (\frac d2,s_2)\times (\frac{d-1}2,s_3)$. By Sobolev embedding, bounded sets in $\bs H^{-\bs\sigma}$ are precompact in $\bs H^{-\bs s}$. Therefore, the tightness of $J_\eps^u$ follows from Lemma \ref{lem:J3}, by choosing $K_\ell = \{J\in \bs H^{-\bs s} \colon\|J\|_{\bs H^{-\bs\sigma}}^2\le\ell\}$ which is compact.
\qed\end{proof}

\noindent\textit{Proof of Theorem~\ref{thm:in}} \hspace{2truept} 
The exponential tightness in Lemma \ref{lem:et} together with the upper bound for compacts in Lemma~\ref{lem:in} imply the upper bound for closed sets by \cite[Lemma 1.2.18]{DZ}. It remains to prove the goodness of the rate function $\mc I$.  

Recall that, as shown in the proof of Lemma \ref{lem:in}, $\mc I=\mc I_2$, where $\mc I_2$ is defined in \eqref{Iboh1}. Let us first prove that, for each $\bs \ell$ and $m$, the set $\bs D_{\bs\ell,m}$ in \eqref{Iboh2} is compact. By Theorem \ref{t:ap}, $\bs \Gamma_{\bs\ell}$ is a compact subset of $\bs U\times \bs V$. Moreover, the sets $\bs F_m$ and  $\bs G_m$ are closed subsets of $\bs V\times \bs H^{-\bs s}$. Since the embedding $\mc M([0,T]\times \bb T^d\times\Lambda_{d-1};\bb R^d) \hookrightarrow \bs H^{-\bs s}$ is compact, the compactness of $\bs D_{\bs\ell,m}$ follows from a total variation upper bound for $J$. To this end, we observe that if $(u,V,J)\in \bs D_{\bs\ell,m}$ then the representation \eqref{Iboh3} gives, for each $f\in C([0,T]\times \bb T^d\times\Lambda_{d-1};\bb R^d)$, 
\[
|J(f)| \le \|\nu\|_{L^2(V_t\,\rmd t;\bb R^d)} \|f\|_{L^2(V_t\,\rmd t;\bb R^d)} \le m \essup_{t} \||V_t|\|_\mathrm{TV} \|f\|_\infty \le m \ell_1 \|f\|_\infty\;. 
\]
Since for each $\eta,\chi,\psi$ the functional $\mc I^{\eta,\chi,\psi}$ as defined in \eqref{mcI} is continuous, we get that the functional in \eqref{Iboh1bis} is lower semicontinuous. This implies the lower semicontinuity of $\mc I_2$. Finally, since $a_{\bs\ell,m}\to+\infty$, for each $q\in\bb R_+$ there exists $\bs\ell,m$ such that $\{\mc I_2 \le q\} \subset \bs D_{\bs\ell,m}$, which implies that $\{\mc I_2 \le q\}$ is pre-compact.
\qed

\appendix
\normalsize

\section{Measurability issues}
\label{app:a}
\begin{lem}
\label{mesura}
The map $C([0,T];L^2) \ni u \mapsto V_\eps^u \in \bs V$ defined by $V_{\eps,t}^{u} = V_\eps^{u_t}$, $t\in[0,T]$, for $u\in C([0,T];H^1)$, and $V_{\eps,t}^{u} = 0$, $t\in[0,T]$, otherwise, is Borel measurable. 
\end{lem}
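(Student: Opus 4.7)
The plan is to reduce Borel measurability into $\bs V$, equipped with the bounded weak* topology, to Borel measurability of the evaluation maps $u \mapsto \langle V_\eps^u,f\rangle$ for $f$ in a suitable dense subset of the predual $L^1([0,T];C(\bb T^d\times\Lambda_{d-1}))$. First I would stratify $C([0,T];L^2)$ as a countable disjoint union of Borel pieces by noting that the set $C([0,T];L^2)\setminus C([0,T];H^1)$ carries the constant zero, while $C([0,T];H^1)\cap\{u\colon \sup_t\mc F_\eps(u_t)\le n\}$ is Borel (by $L^2$-lower semicontinuity of $\mc F_\eps$) and maps into the norm bounded set $\{V\colon \essup_t\||V_t|\|_\mathrm{TV}\le n\}$. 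On each such bounded subset of $\bs V$, the bounded weak* topology coincides with the weak* topology, which in turn is metrizable thanks to separability of the predual; measurability therefore reduces to Borel measurability of $u \mapsto \langle V_\eps^u,f\rangle$ for $f$ in a countable dense family, and by bilinearity and density to the case $f(t,x,\Sigma) = \chi_{[a,b]}(t)\, g(x,\Sigma)$ with $g\in C(\bb T^d\times\Lambda_{d-1})$.

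Next I would establish the key pointwise statement: for each $g\in C(\bb T^d\times\Lambda_{d-1})$ the map $H^1\ni u\mapsto V_\eps^u(g)=\int_{\bb T^d} F_g(x,u(x),\nabla u(x))\,\rmd x$ is Borel, where
\[
F_g(x,s,v) := \Big[\frac{\eps}{2}|v|^2 + \frac{1}{\eps}W(s)\Big]\,g\big(x,(\mathbf{n}(v))^\perp\big), \qquad \mathbf{n}(v) = v/|v| \text{ for } v\ne 0,\ \mathbf{n}(0) = e_0.
\]
The only obstruction to continuity is the jump of $\mathbf{n}$ at $v=0$. I would overcome this by splitting $F_g = F_g^{(1)} + F_g^{(2)}$ with $F_g^{(1)}(x,s,v) := \tfrac{\eps}{2}|v|^2\,g(x,(\mathbf{n}(v))^\perp)$ and $F_g^{(2)}(x,s,v) := \tfrac{1}{\eps}W(s)\,g(x,(\mathbf{n}(v))^\perp)$. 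The quadratic gradient weight in $F_g^{(1)}$ kills the discontinuity at $v=0$, so $F_g^{(1)}$ is continuous on $\bb T^d\times\bb R^d$ with quadratic growth; combined with the quartic growth of $W$ and Sobolev embeddings for $d\le 3$, Vitali's convergence theorem yields that $u\mapsto\int F_g^{(1)}(x,u,\nabla u)\,\rmd x$ is continuous on $H^1$. For $F_g^{(2)}$, the bounded Borel function $(x,v)\mapsto g(x,(\mathbf{n}(v))^\perp)$ can be approximated pointwise $\rmd x\otimes\rmd v$-a.e.\ by a sequence of continuous functions (via Lusin's theorem, or explicitly via cut-offs $\chi_{\{|v|\ge 1/k\}}\,g(x,(v/|v|)^\perp) + \chi_{\{|v|<1/k\}}\,g(x,e_0^\perp)$); each approximant yields a continuous functional on $H^1$ (using $H^1\hookrightarrow L^4$), and Borel measurability of the limit follows from dominated convergence, since $W(u)$ is uniformly integrable on $H^1$-bounded sets.

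Finally, given Step 3, the map $(t,u)\mapsto V_\eps^{u_t}(g)$ is Borel on $[0,T]\times C([0,T];H^1)$ as a composition of the continuous evaluation $(t,u)\mapsto u_t$ into $H^1$ with the Borel map $v\mapsto V_\eps^v(g)$; a Fubini argument then gives the Borel measurability of $u\mapsto\int_a^b V_\eps^{u_t}(g)\,\rmd t$ on each of the stratifying Borel pieces, which by Step 2 is what was needed. The main obstacle is the one treated in Step 3, namely the unavoidable discontinuity of $u\mapsto\mathbf{n}^u$ on the locus $\{\nabla u=0\}$; the cancellation by the factor $|\nabla u|^2$ in the leading term and a pointwise approximation in the lower-order term together turn this discontinuity into a harmless Borel measurability statement.
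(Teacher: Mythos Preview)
Your overall architecture --- stratify $C([0,T];L^2)$ by Borel levels on which the target varifold is norm-bounded, reduce to weak* measurability on metrizable balls, and then handle the single discontinuity of $\mathbf n^u$ at $\nabla u=0$ by approximation --- matches the paper's. The difference lies in how that discontinuity is treated, and here your Step~3 has a genuine gap in the $F_g^{(2)}$ part.

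Your two proposed approximations both fail as written. The explicit cut-offs $\chi_{\{|v|\ge 1/k\}}\,g(x,(v/|v|)^\perp)+\chi_{\{|v|<1/k\}}\,g(x,e_0^\perp)$ are not continuous in $v$ (they jump across the sphere $|v|=1/k$), so the associated functionals $u\mapsto \int \eps^{-1}W(u)\,[\text{approximant}](x,\nabla u)\,\rmd x$ are not $H^1$-continuous in general; your claim ``each approximant yields a continuous functional on $H^1$'' is thus unjustified for this choice. The Lusin alternative does produce continuous approximants, but only with convergence $\rmd x\otimes\rmd v$-a.e.; since for fixed $u$ you integrate against the push-forward of $\rmd x$ under $x\mapsto(x,\nabla u(x))$, a measure singular with respect to $\rmd x\otimes\rmd v$, this almost-everywhere convergence gives no control on the integral. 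The repair is immediate: replace $\chi_{\{|v|\ge 1/k\}}$ by a continuous cut-off $\phi_h(|v|)$ with $\phi_h(\xi)=0$ for $\xi\le 0$ and $\phi_h(\xi)=1$ for $\xi\ge\eta_h\downarrow 0$. Then each approximant is continuous in $(x,v)$, bounded by $\|g\|_\infty$, and converges pointwise everywhere to $g(x,(\mathbf n(v))^\perp)$, so the functionals are $H^1$-continuous and their pointwise limit is Borel. This is exactly the $\phi_h$ device the paper employs.

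A secondary point: your approximants are continuous only in the $H^1$-topology, so you obtain Borel measurability on $C([0,T];H^1)$ with respect to its own Borel structure; to conclude, you need that this coincides with the trace of the $C([0,T];L^2)$ Borel $\sigma$-algebra. This is true (both spaces are Polish and the inclusion is a continuous injection, so the Lusin--Souslin theorem applies), but it is an extra step you should make explicit. The paper sidesteps this entirely by first regularising with the resolvent $R_k=(\mathrm{Id}-\delta_k\Delta)^{-1}$, which is continuous $L^2\to H^2$; the resulting approximants $\Gamma^f_{k,h}$ are then continuous on $C([0,T];H^1)$ already in the ambient $C([0,T];L^2)$-topology, and one passes to the limit $k\to\infty$ first, then $h\to\infty$.
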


\begin{proof}
First we note that $C([0,T];H^1)$ is a Borel subset of $C([0,T];L^2)$. We claim that, for any $f\in L^1([0,T]; C(\bb T^d \times \Lambda_{d-1}))$, the function 
\[
C([0,T];H^1) \ni u \mapsto \Gamma^f(u) := \int_0^T\! V_{\eps,t}^u (f_t) \,\rmd t \in \bb R
\]
is measurable with respect to the Borel $\sigma$-algebra of $C([0,T];L^2)$ restricted to $C([0,T];H^1)$. To prove this claim we introduce the following two-parameters approximation. Given two sequences $\delta_k\downarrow 0$ and $\eta_h\downarrow 0$, we set $R_k = (\mathrm{Id}-\delta_k\Delta)^{-1} \colon$ $ L^2\to H^2$ and $\phi_h\bb\colon \bb R \to\bb R$ be a continuous function such that $0\le\phi_h\le 1$, $\phi_h(\xi) = 0$ for $\xi\le 0$, and $\phi_h(\xi) = 1$ for $\xi\ge \eta_h$. We then define
\[
\begin{split}
\Gamma^f_{k,h}(u) & = \int_0^T\!\int\! \Big[ \phi_h(|\nabla R_k u_t|) f_t\Big(x,\frac{(\nabla R_ku_t)^\perp}{|\nabla R_ku_t|}\Big) \\ & \quad + (1-\phi_h(|\nabla R_k u_t|)) f_t\big(x,e_0^\perp\big) \Big] \mu^{R_ku_t}_\eps(\rmd x)\;\rmd t
\end{split}
\]
We note that $\Gamma^f_{k,h}$ as a function on $C([0,T];H^1)$ is continuous in the $C([0,T];L^2)$- topology. Moreover, as $k\to \infty$, $\Gamma^f_{k,h} \to \Gamma^f_h$ pointwise on $C([0,T];H^1)$, where
\[
\Gamma^f_h(u) = \int_0^T\!\int\! \Big[ \phi_h(|\nabla u_t|) f_t\Big(x,\frac{(\nabla u_t)^\perp}{|\nabla u_t|}\Big) + (1-\phi_h(|\nabla u_t|)) f_t\big(x,e_0^\perp\big) \Big] \mu^{u_t}_\eps(\rmd x)\;\rmd t\;.
\]
In particular, the map $C([0,T];H^1) \ni u \to \Gamma^f_h(u)$ is measurable with respect to the Borel $\sigma$-algebra of $C([0,T];L^2)$ restricted to $C([0,T];H^1)$. By dominated convergence, as $h\to\infty$,
$\Gamma^f_h \to \Gamma^f$ pointwise on $C([0,T];H^1)$, hence the claim follows.

In order to prove the required measurability, we write $C([0,T];H^1) = \bigcup_{\ell\in \bb N} \mc C_\ell$, where $\mc C_\ell := \{u\in C([0,T];H^1)\colon \|u\|_{C(H^1)}\le \ell\}$ and, similarly, $\bs V = \bigcup_{m \in \bb N} \bs V_m$ where $\bs V_m := \{V\in \bs V \colon \mathrm{ess} \sup_t\|V_t\|_{TV} \le m \}$. Clearly it is enough to show that each restriction of $\mc C_\ell \ni u \mapsto V_\eps^u\in \bs V$ is measurable. Next, we notice that, by Sobolev embedding, $|V_{\eps,t}^u|_{TV} = |\mu^u_{\eps,t}|_{TV} \le C_\eps (1+\|u_t\|_{H^1}^4)$, hence if $u\in \mc C_\ell$ then there exists $m^*=m^*(\eps,\ell)$ such that $V_\eps^u \in \bs V_{m^*}$. It is therefore enough to show the Borel measurability of the map $\mc C_\ell \ni u \mapsto V_\eps^u\in \bs V_{m^*}$. Since $\bs V_{m^*}$ is endowed with the weak* topology induced by the duality with the separable Banach space $L^1([0,T];C(\bb T^d\times \Lambda_{d-1}))$ then the topology of $\bs V_{m^*}$ has a countable basis and it is a compact metric space. Therefore, by definition of weak* topology and the initial claim the statement follows. 
\qed\end{proof}

\section{Deterministic bounds}
\label{app:b}

\noindent\textit{Proof of Theorem \ref{t:ap}} \hspace{2truept} 
We start by proving item a). The first  statement, i.e., $u \in L^\infty((0,T); BV(\mathbb{T}^d));\{\pm 1 \})$, follows from the strong convergence in $\mathbf{U}$ and the static result from \cite{Modica}, which yields the estimate $\tau \| u_t\|_{TV} \leq 2 \liminf_{\eps } \mathcal{F}_\eps(u_t^\eps)$ for a.e.\ $t \in (0,T)$. Finally, the last statement follows from the lower semicontinuity of $\omega^\infty(\cdot,\delta)$. 

To prove item b), first we observe that the bound $\textrm{ess} \sup_t \| |V_t| \|_{TV} \leq \ell_1$ follows readily from the weak* lower semicontinuity of the norm in $\bs V$ and condition b) in Definition~\ref{def:2}. Now we prove b.2). We fix  a vector field $\eta \in C^1( [0,T] \times \mathbb{T}^d; \mathbb{R}^d)$ and, as in \eqref{id6}, we write,
\[  
\begin{split}
\int_0^T\! \delta V_{\eps,t}^{u^\eps} (\eta_t)\, \rmd t & = \int_0^T\!  \int\! \nabla u^\eps_t\cdot \eta_t \Big(\eps\Delta u_t^\eps - \frac 1\eps W'(u_t^\eps) \Big)\rmd x \, \rmd t \\ & \quad - \int_0^T \!\int\! \mathbf{n}^{u^\eps}_t \cdot D\eta_t\, \mathbf{n}^{u^\eps}_t\, \rmd \xi^{u^\eps}_t \rmd t\;.
\end{split}
 \]
By Lemma \ref{prop:2}, Cauchy-Schwartz inequality, and condition b) in Definition~\ref{def:2}, as $\eps \to 0$ we have,
\[   
\Big| \int_0^T\! \delta V_t (\eta_t) \, \rmd t \Big| \leq  \ell_1^{1/2}  \Big(\int_0^T\!  
 |V_t| (|\eta|^2) \, \rmd t \Big)^{1/2}\; .
 \] 
By density, we can apply the Riesz representation theorem to obtain $\int_0^T \! \delta V_t(\eta_t)\, \rmd t$ $= - \int_0^T \! |V_t|(H \cdot \eta_t) \, \rmd t$ for some $H \in L^2 ([0,T] \times \bb T^d, |V_t| \, \rmd t ; \bb R^d )$ satisfying the bound in b.2) and for any vector field $\eta \in C^1( [0,T] \times \mathbb{T}^d; \mathbb{R}^d)$. By Fubini theorem we conclude that for a.e.\ $t\in [0,T]$ the varifold $V_t$ has bounded first variation represented by the mean curvature vector $H_t \in L^2( \bb T^d, |V_t|; \bb R^d)$. The proof of b.2) is thus completed.

In order to prove b.1), let $\{ \phi_j\}$ be the dense subset in the unit ball of $C^1(\mathbb{T}^d)$ in Definition \ref{def:2}. In view of conditions b) and d) in Definition~\ref{def:2}, by the Kolmogorov-Riesz-Fr\'echet compactness criterion, we can pass to a subsequence so that there exists $ \lim_\eps |V_t^{u^\eps}|(\phi_j)$  in $ L^1([0,T])$ and for a.e.\ $t\in (0,T)$ for any $j \in \bb N$. Moreover, in view of the uniform mass bound b) the same holds for any $\phi \in C(\mathbb{T}^d)$ by density and homogeneity. On the other hand, since $(|V^{u^\eps}|)_{\eps>0} \subset L^\infty([0,T]; \mc M_+)$ is uniformly bounded, up to subsequence $|V^{u^\eps}| \to \mu$ weakly* for some $\mu \in L^\infty([0,T]; \mc M_+)$ and $\mathrm{ess\,sup}_{t\in [0,T]} \,\| \mu_t\|_{TV} \leq \ell_1$. Thus, by dominated convergence $\mu_t(\phi)=\lim_\eps |V_t^{u^\eps}|(\phi)$ for a.e. $t \in [0,T]$, i.e., up to subsequences, $\mu_t=\lim_\eps |V_t^{u^\eps}|$ weakly as measures for a.e.\ $t\in (0,T)$.  By condition b) in Definition~\ref{def:2} and Fatou's lemma we have $\int_0^T \varliminf_{\eps} \mathcal{W}_\eps(u^\eps_t) \, \rmd t \leq  \ell_1$, hence for a.e.\ $t\in [0,T]$, possibly passing to a further subsequence depending on $t$, we have that $\lim_\eps \mathcal{W}_\eps(u^\eps_t) <+\infty$. Applying \cite[Thms.~4.1, 5.1]{RS} we deduce that, a.e.\ $t\in [0,T]$, $\mu_t$ is rectifiable and $\tau^{-1} \mu_t$ is an integral measure.  By b.2) for a.e.\ $t\in[0,T]$ the varifold $V_t$ has bounded first variation and $\tau^{-1}|V_t|$ is an integral measure, thus $\tau^{-1}V_t$ is an integral varifold by Allard rectifiability theorem (see, e.g., \cite[Thm.~42.4]{Simon}). Finally, item b.3) follows from the lower semicontinuity of $\omega^1(\cdot,\delta)$.
 
Item c) follows easily from the inequality $| \nabla G(u^\eps_t)| \rmd x \leq \mu^{u^\eps_t}(\rmd x)$ (i.e., the standard trick from \cite{Modica}) with $G(u) := \int_0^u \! \sqrt{2W(v)}\, \rmd v$ as in Lemma~\ref{t:ome}, the lower semicontinuity of $BV$-norm with respect to $L^1$-convergence and item a), recalling that $\tau=G(1)-G(-1)$.

It remains to show  the compactness of $\bs \Gamma_{\bs \ell}$. By properties a) and b) proven above, $\bs \Gamma_{\bs \ell}$ is a norm bounded subset of $\mathbf{U} \times \mathbf{V}$. Hence, as recalled in Section \ref{sec:2}, it is metrizable so that it is compact iff it is sequentially compact. Let $((u_n,V_n)) \subset \bs \Gamma_{\bs \ell} $.  In view of property a) and the compact embedding $BV \hookrightarrow L^1$, the sequence $(u_n) \subset \bs U$ is precompact by Ascoli-Arzela theorem. On the other hand $(V_n) \subset \bs V$ is precompact in view of the uniform mass bound in b). Thus, up to subsequences $(u,V)=\lim_n (u_n,V_n)$ and it remains to show that $(u,V) \in \bs \Gamma_{\bs \ell}$. This follows easily by constructing a diagonal sequence $(u^\eps,V_\eps^{u^\eps})_{\eps>0}$, $(u^\eps)_{\eps>0}\subset C([0,T];H^1)\cap L^2([0,T];H^2)$,  from the approximating sequences for each $(u_n,V_n)$ which keeps the conditions a)-d) in Definition \ref{def:2}.
\qed

\section{Stochastic currents}
\label{app:c}

Let us first briefly review the theory of It\^o stochastic currents for semimartingales in $\bb R^n$ as developed, e.g., in \cite{FGGT}. Let $(X_t)_{t \in [0,T]}$ be a continuous semimartingale on $\bb R^n$ and $f \colon \bb R^n \to \bb R^n$ a smooth vector field with compact support. Then the It\^o stochastic integral 
$\mc J(f):=\int_0^T f(X_t) \cdot dX_t$ is well defined with probability one. Since the exceptional set depends on $f$, it is not obvious that, with probability one, the map $f \mapsto \mc J(f)$ extends to a continuous linear functional on a suitable functional space for the vector field $f$. This issue is solved in 
\cite[Thm.~9]{FGGT}, where it is shown that, with probability one, $f \mapsto \mc J(f)$ defines a continuous linear functional on $H^s(\bb R^n;\bb R^n)$ for $s>n/2$.

Here, we develop a theory of stochastic currents for the processes obtained by solving the stochastic Allen-Cahn equation \eqref{1}. We do not attempt a theory of infinite dimensional currents but we define them on a restricted class of vector fields that are sufficient for our purposes. This analysis does not depend on the scaling parameters $\eps$ and $\lambda_\eps$, therefore, to simplify the notation, throughout this section we set $\eps=\lambda_\eps=1$ and drop them from the notation.

As proven in \cite{BBP1}, given $\bar u_0\in H^1$ and $T>0$ there exists a unique strong solution to \eqref{1} with initial condition $\bar u_0$. Moreover, denoting by $\bb P$ the induced law on $\Omega:=C([0,T];L^2)$, it satisfies $\bb P (u \in C([0,T];H^1)\cap L^2([0,T];H^2))=1$ and for  $p\in [1,\infty)$ there exists $C=C(\bar{u}_0,T,p)>0$ such that 
\begin{equation}
\label{steu}
\bb E \Big( \sup_{t\in [0,T]} \mc F (u_t) + \int_0^T\! \mc W (u_t)\, \rmd t \Big)^p \le C \;.
\end{equation}
Given $\bs H^{\bs s} = H^{s_1}([0,T];H^{s_2}(\bb T^d;H^{s_3}(\Lambda_{d-1};\bb R^d)))$, with $\bs s = (s_1,s_2,s_3) \in (\frac 12,1)\times (\frac d2 ,+\infty) \times (\frac{d-1}2,+\infty)$ and $f\in \bs H^{\bs s}$, the definition \eqref{J} reads,
\begin{equation}
\label{J2}
J^u(f) := - \int_0^T\! \big\langle\nabla u_t\cdot f_t(\cdot,(\mathbf{n}^u)^\perp)\big\rangle_{L^2}\, \rmd u_t\;,
\end{equation}
where we recall that $\mathbf{n}^u$ has been defined in \eqref{va}.

\begin{theorem}
\label{thm:sc}
Given $\bs s \in (\frac 12, 1) \times (\frac d2,+\infty) \times (\frac{d-1}2,+\infty)$, there exists a measurable map $ \Theta \colon \Omega \to \bs H^{-\bs s}$ such that $\bb P$-a.s.\ $\langle \Theta(u), f \rangle= J^u(f)$ for all $f \in \bs H^{\bs s}$. 
\end{theorem}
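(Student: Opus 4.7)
The plan is to construct $\Theta(u)$ via a Fourier/eigenfunction expansion and control its second moment in $\bs H^{-\bs s}$, measurability then being inherited from the coefficient-by-coefficient construction. First I would fix an orthonormal basis $(f_a)_{a\in \mc A}$ of $L^2([0,T]\times\bb T^d\times\Lambda_{d-1};\bb R^d)$ of the tensor product form $f_a=h_n\otimes\phi_k\otimes\psi_{q,m}$, where $(h_n)$, $(\phi_k)$, $(\psi_{q,m})$ diagonalize $I-\partial_t^2$ on $[0,T]$, $I-\Delta$ on $\bb T^d$, and $I-\Delta$ on $\Lambda_{d-1}$ respectively (so that $a=(m,n,k,q)$, in analogy with the basis used in Lemma \ref{lem:J3}); with this choice $(\lambda_a^{-1/2}f_a)$ is an orthonormal basis of $\bs H^{\bs s}$ where $\lambda_a=(1+n^2)^{s_1}(1+|k|^2)^{s_2}(1+|q|^2)^{s_3}$, and Parseval gives $\|\Psi\|_{\bs H^{-\bs s}}^2=\sum_a\lambda_a^{-1}|\Psi(f_a)|^2$.

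For each fixed $a$ the test field $f_a$ is smooth, so applying It\^o's formula to \eqref{J2} as in the proof of Lemma \ref{lem:J1} gives $\bb P$-a.s.\ the decomposition $J^u(f_a)=A^{f_a}_T+N^{f_a}_T$, with $A^{f_a}$ absolutely continuous and $N^{f_a}$ a continuous martingale with quadratic variation majorized by $C\int_0^T|V^u_t|(|f_a|^2)\,\rmd t$. In particular the map $u\mapsto J^u(f_a)$ is Borel measurable on $\Omega$. Combining Cauchy--Schwarz on $A^{f_a}_T$ and It\^o's isometry on $N^{f_a}_T$ with the moment bound \eqref{steu}, one obtains
\[
\bb E\big|J^u(f_a)\big|^2 \le C\,\|f_a\|_\infty^2\, \bb E\Big[\sup_{t\in[0,T]}\mc F(u_t)+\int_0^T\mc W(u_t)\,\rmd t\Big]^2 \le C'\,\|f_a\|_\infty^2\;,
\]
with $C'$ independent of $a$.

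Next I would sum these bounds against $\lambda_a^{-1}$. Since $(f_a)$ is a product of eigenfunctions of self-adjoint elliptic operators on compact manifolds/intervals, we have $\|f_a\|_\infty\le C$ uniformly in $a$, and under the assumption $\bs s\in(\tfrac12,1)\times(\tfrac{d}{2},\infty)\times(\tfrac{d-1}{2},\infty)$ the three eigenvalue traces $\sum_n(1+n^2)^{-s_1}$, $\sum_k(1+|k|^2)^{-s_2}$, $\sum_q(1+|q|^2)^{-s_3}$ are all finite (the last sum being finite because $\dim\Lambda_{d-1}=d-1$). Fubini then yields $\bb E\sum_a\lambda_a^{-1}|J^u(f_a)|^2<\infty$; consequently, outside a $\bb P$-null set $\mc N$, the sequence $(\lambda_a^{-1/2}J^u(f_a))$ is square summable and the prescription
\[
\Theta(u)\;:=\;\sum_a\lambda_a^{-1}\,J^u(f_a)\,\widetilde f_a\;,\qquad u\notin\mc N\;,
\]
(with $\widetilde f_a$ the dual basis of $(f_a)$ in $\bs H^{-\bs s}$) defines an element of $\bs H^{-\bs s}$. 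Setting $\Theta(u):=0$ on $\mc N$ produces a Borel-measurable map $\Omega\to\bs H^{-\bs s}$ as a pointwise limit of finite measurable partial sums.

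Finally I would verify the identification $\langle\Theta(u),f\rangle=J^u(f)$ for every $f\in\bs H^{\bs s}$. Restricting $f$ to a finite linear combination of the $f_a$'s this is true by construction. For general $f$, approximate in $\bs H^{\bs s}$ by such combinations $f^{(N)}$: the left-hand sides converge because $\Theta(u)\in\bs H^{-\bs s}$, while on the right, by the It\^o-isometry estimates above applied to $f-f^{(N)}$, one has $\bb E|J^u(f)-J^u(f^{(N)})|^2\le C\|f-f^{(N)}\|_{\bs H^{\bs s}}^2\to 0$, so that along a subsequence $J^u(f^{(N)})\to J^u(f)$ $\bb P$-a.s., yielding the claimed identity outside a further null set depending on $f$. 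The single exceptional set $\mc N$ suffices because $\bs H^{\bs s}$ is separable and both sides depend linearly and continuously on $f$ once $u\notin\mc N$. The main technical obstacle is the non-smooth dependence of $f_t(\cdot,(\mathbf{n}^u)^\perp)$ on $u$ through $\mathbf{n}^u$, which prevents a direct chain-rule argument; this is handled at the level of the inner product in \eqref{J2} by using the bound $\eps|\nabla u|^2\rmd x\le 2\rmd|V^u_\eps|$ already exploited in Lemma \ref{lem:J1}, which renders the quadratic variation of $N^{f_a}$ controllable by $\|f_a\|_\infty^2$ independently of the direction field $(\mathbf{n}^u)^\perp$.
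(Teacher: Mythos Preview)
Your overall strategy---Fourier/eigenfunction expansion, coefficient-wise second moment bounds via \eqref{steu}, and summing against the Sobolev weights $\lambda_a^{-1}$---mirrors the paper's approach closely, and the decomposition $J^u(f_a)=A_T^{f_a}+N_T^{f_a}$ is exactly the one used there. However, there is a genuine gap in your summability step.

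You assert that the basis elements satisfy $\|f_a\|_\infty\le C$ uniformly in $a$. This holds for the time and space factors (cosines on $[0,T]$, exponentials on $\bb T^d$), but it \emph{fails} for the $L^2$-normalized eigenfunctions $\psi_{q,m}$ of the Laplace--Beltrami operator on $\Lambda_{d-1}$ when $d=3$. Indeed $\Lambda_2\cong\bb{RP}^2$, and even spherical harmonics on $S^2$ have $L^\infty$ norm growing like the square root of the degree (the zonal harmonic $Y_l^0$ satisfies $\|Y_l^0\|_\infty\sim\sqrt{l}$). The H\"ormander sup-norm bound on an $n$-manifold gives only $\|\psi_q\|_\infty^2\le C\mu_q^{(n-1)/2}$; with $n=d-1=2$ and Weyl's law, the resulting sum $\sum_q(1+\mu_q)^{-s_3}\|\psi_q\|_\infty^2$ converges only for $s_3>\tfrac32$, strictly stronger than the stated hypothesis $s_3>\tfrac{d-1}{2}=1$. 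So your argument does not cover the full range in the theorem for $d=3$.

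The paper circumvents this with a device you omitted: it identifies $\Lambda_{d-1}\cong S^{d-1}/\{\pm1\}$, applies a bounded extension operator $\mathrm{Ext}\colon H^{s_3}(S^{d-1};\bb R^d)\to H^{s_3+1/2}(\bb R^d;\bb R^d)$, and then works with the Fourier transform on $\bb R^d$, where the characters $e^{\rmi q\cdot p}$ \emph{are} uniformly bounded. The half-derivative lost to the trace/extension exactly converts the summability condition $\int_{\bb R^d}(1+|q|^2)^{-s_3-1/2}\,\rmd q<\infty$ into the sharp $s_3>\tfrac{d-1}{2}$. This extension trick is not cosmetic; it is what recovers the stated exponent, and it also yields the pathwise estimate $\|J^u\|_{\bs H^{-\bs s}}^2\le C(\mc C_1+\mc C_2)$ recorded in Remark~\ref{rem:boh} and used in Lemma~\ref{lem:J3}.
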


\begin{proof} 
Up to isometries, $\Lambda_{d-1}(\bb R^d) = \Lambda_1(\bb R^d) = S^{d-1}/\{\pm 1\}$, hence we can identify $H^s(\Lambda_{d-1};\bb R^d) = H^s_\mathrm{even}(S^{d-1};\bb R^d)\subset H^s(S^{d-1};\bb R^d)$, the closed subspace of even vector fields. We recall that for $s>\frac 12$ there exists a bounded linear extension operator $\mathrm{Ext} \colon H^s(S^{d-1};\bb R^d) \to H^{s+\frac 12}(\bb R^d;\bb R^d)$. Let $\bs s'= (s_1,s_2,s_3+\frac 12)$ and set
\[
\mc H^{\bs s'} := H^{s_1}([0,T];H^{s_2}(\bb T^d;H^{s_3+\frac 12}(\bb R^d;\bb R^d)))\;.
\]
With a slight abuse of notation, we also denote by $\mathrm{Ext}\colon \bs H^{\bs s} \to \mc H^{\bs s'}$ the bounded operator induced by the extension operator above. Note that, in view of the choice of $\bs s$, there is a continuos embedding $\mc H^{\bs s'}\subset C_0([0,T]\times\bb T^d\times\bb R^d;\bb R^d)$. Hereafter, for $f \in \bs H^{\bs s}$ we set $g:=\mathrm{Ext}(f)$, so that $g\in \mc H^{\bs s'}$. 

It is convenient to characterize the elements of $\mc H^{\bs s'}$ throughout their Fourier expansion. To this purpose, we introduce the functions $e_{n,k,q}^m \colon [0,T]\times \bb T^d \times \bb R^d \to \bb C^d $ defined by
\begin{equation}
\label{eq:enk}
e_{n,k,q}^m(t,x,p) := \frac{2-\delta_{n,0}}{\sqrt T} \cos\Big(\frac{n\pi t}T\Big) \,\rme^{2\pi\rmi\, k\cdot x} \, \frac{\rme^{\rmi q\cdot p}}{(2\pi)^{d/2}} \, e_m \;, 
\end{equation}
where $n\in \bb Z_+$, $k\in \bb Z^d$, $q \in \bb R^d$, and $e_1,\ldots, e_d$ is the canonical basis in $\bb R^d$. For $g$ as above we denote by
\[
\widehat g^m_n(k,q):= \int_0^T\!\int\! \int \big[e_{n,k,q}^m\big]^* \cdot g \,\rmd p\,\rmd x\, \rmd t
\]
its Fourier coefficients, where $*$ denotes complex conjugation. We remark that, by extending $g$ to an even function of $t\in [-T,T]$ and expanding it as a Fourier series, an equivalent norm in $\mc H^{\bs s'}$ is given by 
\begin{equation}
\label{normf}
|\!|\!| g|\!|\!|_{\bs s'}^2 = \sum_{m,n,k} \int\! (1+n^2)^{s_1} (1+|k|^2)^{s_2}(1+ |q|^2)^{s_3+\frac 12} |\widehat g^m_n(k,q)|^2\, \rmd q\;.
\end{equation}
The dual space $(\mc H^{\bs s'})^\prime$ can be identified with $\mc H^{- \bs s'}$ under the natural $L^2$-pairing $\langle \cdot, \cdot \rangle$ of the Fourier coefficients. 

Since $g=\mathrm{Ext}(f)$, \eqref{J2} reads,
\begin{equation}
\label{J1}
J^u(f) = -\int_0^T\! \big\langle \nabla u_t \cdot g_t\big( \cdot,\tfrac{\nabla u_t}{|\nabla u_t|}\big)\;, \rmd u_t \big\rangle_{L^2}\;.
\end{equation}

We observe that $f\mapsto J^u(f)$ is a linear map from $\bs H^{\bs s}$ to the measurable functions of $u$. Now, we claim that there exists a random constant $\mc C= \mc C(u)$ such that $\mc C \in L^2(\Omega;\rmd \bb P)$ and $|J^u(f)|\le \mc C \|f\|_{\bs H^{\bs s}}$, $f \in \bs H^{\bs s}$. Postponing the proof of the claim, we first show how this implies the existence of the map $\Theta$. 

We present  below a direct construction which is alternative to the abstract results in the literature, see, e.g., \cite[Lemma 2.2]{Flandoli}. Consider the map $\mc B$, acting on the set of simple functions on $\Omega$ taking value in $\bs H^{\bs s}$, defined by setting 
\[
\mc B(\Phi) = \bb E \Big(\sum_i J^u(f_i) \chi_{\Omega_i}\Big)\;,
\]
where $\Phi=\sum_i f_i \chi_{\Omega_i}$ with $(\Omega_i)$ a finite measurable partition of $\Omega$ and $f_i\in \bs H^{\bs s}$. From the claim and the Cauchy-Schwartz inequality,
\[
\begin{split}
|\mc B(\Phi) | & \le  \bb E \Big(\sum_i \mc C \|f_i\|_{\bs H^{\bs s}} \chi_{\Omega_i}\Big)\le \|\mc C\|_{L^2(\Omega)}\Big[\bb E \Big(\sum_i \|f_i\|^2 \chi_{\Omega_i}\Big)\Big]^{1/2} \\ & = \|\mc C\|_{L^2(\Omega)} \|\Phi\|_{L^2(\Omega;\bs H^{\bs s})}\;.
\end{split}
\]
Therefore, $\mc B$ is linear and bounded, whence it extends by density to $L^2(\Omega;\bs H^{\bs s})$. Since $\big(L^2(\Omega;\bs H^{\bs s})\big)' = L^2(\Omega;\bs H^{-\bs s})$, there is a unique $\Psi\in L^2(\Omega;\bs H^{-\bs s})$ such that, for any measurable subset $\Omega'$ of $\Omega$ and any $f\in \bs H^{\bs s}$,
\[
\bb E \big(\langle \Psi,f\rangle \chi_{\Omega'}\big) =  \mc B(f\chi_{\Omega'})\;.
\]
As $\mc B(f\chi_{\Omega'}) = \bb E \big(J^u(f)\chi_{\Omega'}\big)$, by the arbitrariness of $\Omega'$ it follows that $\bb P$-a.s.\ $\langle \Psi,f\rangle = J^u(f)$ for any $f\in \bs H^{\bs s}$. Choosing $\Theta\colon \Omega\to\bs H^{-\bs s}$ as any  representative of $\Psi$, we have that $\bb P$-a.s.\ $\langle \Theta(u),f\rangle = J^u(f)$. 

It remains to prove the claim. To this end, we write $J^u(f) = A_T^f+N_T^f$ where,
\[
A_T^f := -\int_0^T\! \Big\langle \nabla u_t \cdot g_t\big( \cdot,\tfrac{\nabla u_t}{|\nabla u_t|}\big)\,,(\Delta u_t-W'(u_t)) \,\Big\rangle_{L^2} \, \rmd t 
\]
and
\[
N_T^f := - \int_0^T\! \Big\langle  \nabla u_t\cdot g_t\big( \cdot,\tfrac{\nabla u_t}{|\nabla u_t|}\big)\;, \,\sqrt2\,\rmd\alpha_t \Big\rangle_{L^2}\;.
\]
By setting 
\begin{equation}
\label{mc1}
\mc C_1 := \int_0^T\! \int\! |\nabla u_t|^2\,\rmd x\,\rmd t \int_0^T\, \mc W(u_t)\,\rmd t \;,
\end{equation}
from Cauchy-Schwartz inequality we get,
\[
|A_T^f|^2\le \mc C_1 \|g\|_\infty^2  \le C \mc C_1 |\!|\!|g|\!|\!|^2_{\bs s'} \le C \mc C_1 \|f\|_{\bs H^{\bs s}}^2\;.
\]
By \eqref{steu}, the random constant $\mc C_1$ is such that $\mc C_1 \in L^1(\Omega;\rmd \bb P)$.

To analyze the martingale part $N_T^f$, we first observe that, as follows from Fourier inversion formula for $g$ and the stochastic Fubini's theorem (see also \cite[Lemma 8]{FGGT}), that $\bb P$-a.s.
\begin{equation}
\label{quella}
N_T^f = \sum_{m,n,k} \int\! \widehat g_n^m \,(k,q)^* Z_n^m(k,q) \, \rmd q\;,
\end{equation}
where $Z_n^m(k,q)$ is the complex random variable 
\[
Z_n^m(k,q) = \int_0^T\! \Big\langle \nabla u_t \cdot e_{n,k,q}^m\big(t,\cdot,\tfrac{\nabla u_t}{|\nabla u_t|}\big)\;, \sqrt 2\,\rmd \alpha_t \Big \rangle_{L^2} \;.
\]
By setting 
\begin{equation}
\label{mc2}
\mc C_2 := \sum_{m,n,k} \int\! (1+n^2)^{-s_1} (1+|k|^2)^{-s_2}(1+|q|^2)^{-s_3-\frac 12} |Z_n^m(k,q)|^2\, \rmd q \;,
\end{equation}
from Cauchy-Schwartz inequality in \eqref{quella} we get,
\[
|N_T^f|^2 \le \mc C_2 |\!|\!|g|\!|\!|_{\bs s'}^2 \le  C \mc C_2 \|f\|_{\bs H^{\bs s}}^2\;. 
\]
The random constant $\mc C_2$ is such that $\mc C_2 \in L^1(\Omega;\rmd \bb P)$. In fact, by a straightforward computation and using again \eqref{steu},
\[
\begin{split}
\bb E |Z_n^m(k,q)|^2 & = 2\bb E \int_0^T\!  \Big\| j*\Big(\nabla u_t\cdot e_{n,k,q}^m\big(t,\cdot,\tfrac{\nabla u_t}{|\nabla u_t|}\big)\Big)\Big\|_{L^2}^2 \, \rmd t \\ & \le 2 
\bb E \int_0^T\!  \Big\| \nabla u_t\cdot e_{n,k,q}^m\big(t,\cdot,\tfrac{\nabla u_t}{|\nabla u_t|}\big) \Big\|_{L^2}^2 \, \rmd t \\
& \le 2\|e_{n,k,q}^m\|_\infty^2\, \bb E \int_0^T\! \int\! |\nabla u_t|^2\,\rmd x\, \rmd t  \le C\;,
\end{split}
\]
for some $C$ depending only on $d$ and $T$. Since $\bs s\in (\frac 12, 1) \times (\frac d2,+\infty) \times (\frac{d-1}2,+\infty)$, we then get
\[
\bb E\, \mc C_2 \le C \sum_{n,k} \int\! (1+n^2)^{-s_1} (1+|k|^2)^{-s_2} (1+|q|^2)^{-s_3-\frac 12}\, \rmd q <\infty\,. 
\]

By the previous estimates, the claim is thus proven with $\mc C = C \sqrt{\mc C_1 + \mc C_2}$. 
\qed\end{proof}

\begin{rem}
\label{rem:boh}
In the proof of the previous theorem, we actually proven the estimate,
\[
\|J^u\|_{\bs H^{-\bs s}}^2 \le C(\mc C_1 + \mc C_2)\;,
\]
where the $\bb P$-a.s.\ finite random constant $\mc C_1 = \mc C_1(u)$ and $\mc C_2 = \mc C_2(u)$ are defined in \eqref{mc1} and \eqref{mc2} respectively. 
\end{rem}

\end{document}